\theoremstyle{plain}
\newtheorem{thm}{Theorem}[section]
\newtheorem{lem}[thm]{Lemma}
\newtheorem{prop}[thm]{Proposition}
\newtheorem{assu}[thm]{Assumption}
\theoremstyle{definition}
\newtheorem{defn}[thm]{Definition}
\theoremstyle{remark}
\newtheorem{rem}[thm]{Remark}
\newcommand{\bX}{\bm{X}}
\newcommand{\bx}{\bm{x}}
\newcommand{\balpha}{\bm{\alpha}}
\newcommand{\RR}{\mathbb{R}}
\newcommand{\EE}{\mathbb{E}}
\newcommand{\PP}{\mathbb{P}}
\newcommand{\mbA}{\mathbb{A}}
\newcommand{\MCL}{\mathcal{L}}
\newcommand{\MCP}{\mathcal{P}}
\newcommand{\ud}{\,\mathrm{d}}
\newcommand{\mc}[1]{\mathcal{#1}}
\newcommand{\half}{\frac{1}{2}}
\newcommand{\eps}{\epsilon}
\newcommand{\abs}[1]{\left|#1\right|}
\newcommand{\average}[1]{\left\langle#1\right\rangle}
\newcommand{\dF}{\mathrm{D}_{{\Phi}}}
\newcommand{\barXti}{{\bar X_t^i}}
\newcommand{\barXTi}{{\bar X_T^i}}
\newcommand{\barXsi}{{\bar X_s^i}}
\newcommand{\bR}{\mathbb {R}}
\newcommand{\transpose}{^{\operatorname{T}}}
\newcommand{\rmd}{\,\mathrm{d}}
\newcommand{\rmD}{\mathrm{D}}
\begin{document}

\title{A class of dimension-free metrics for the convergence of empirical measures}



\author{Jiequn Han\thanks{Department of Mathematics, Princeton University, Princeton, NJ 08544-1000, USA, \em{jiequnh@princeton.edu }.} \and Ruimeng Hu\thanks{Department of Mathematics, and Department of Statistics and Applied Probability, University of California, Santa Barbara, CA 93106-3080, {\em rhu@ucsb.edu}. RH was partially supported by the NSF grant DMS-1953035, and the Faculty Career Development Award, the Research Assistance Program Award, the Early Career Faculty Acceleration funding and the Regents' Junior Faculty Fellowship at the University of California, Santa Barbara.} \and  Jihao Long\thanks{The Program in Applied and Computational Mathematics, Princeton University, Princeton, NJ 08544-1000, \em{jihaol@princeton.edu}.}}
\date{\today}
\maketitle

\begin{abstract}
    This paper concerns the convergence of empirical measures in high dimensions. We propose a new class of probability metrics and show that under such metrics, the convergence is free of the curse of dimensionality (CoD). Such a feature is critical for high-dimensional analysis and stands in contrast to classical metrics ({\it e.g.}, the Wasserstein metric).
    The proposed metrics fall into the category of integral probability metrics, for which we specify criteria of test function spaces to guarantee the property of being free of CoD. Examples of the selected test function spaces include the reproducing kernel Hilbert spaces, Barron space, and flow-induced function spaces. Three applications of the proposed metrics are presented: 1. The convergence of empirical measure in the case of random variables; 2. The convergence of $n$-particle system to the solution to McKean-Vlasov stochastic differential equation; 3. The construction of an $\varepsilon$-Nash equilibrium for a homogeneous $n$-player game by its mean-field limit. As a byproduct, we prove that, given a distribution close to the target distribution measured by our metric and a certain representation of the target distribution, we can generate a distribution close to the target one in terms of the Wasserstein metric and relative entropy. Overall, we show that the proposed class of metrics is a powerful tool to analyze the convergence of empirical measures in high dimensions without CoD. 
\end{abstract}

\noindent\textbf{Keywords:} Integral probability metrics, curse of dimensionality,  empirical measure, McKean-Vlasov stochastic differential equation, mean-field games.

\section{Introduction}\label{sec:intro}

The convergence of empirical measures plays a crucial role in analyzing the efficiency of mean-field theory or mean-field games (MFGs), which are fundamental tools to approximate finite-particle or finite-agent systems in the asymptotic of a very large population. Specifically, mean-field theory studies the behavior of a high-dimensional stochastic particle system by considering the effect of all other particles approximated by an average single effect. It plays a significant role in many fields, for instance, statistical physics \cite{bossy2005some,chaintron2021propagation}. MFGs were introduced independently by Lasry-Lions (\cite{LaLi1:2006,LaLi2:2006,LaLi:2007}) and Huang-Malham\'{e}-Caines  (\cite{HuMaCa:06,HuCaMa:07}). 
MFGs study the decision-making problem of a continuum of agents, and are able to provide approximations to Nash equilibria of $n$-player games in which players interact through their empirical measure. For further background on MFGs, we refer to the books \cite{CaDe1:17,CaDe2:17} and the references therein.

In stochastic analysis,  there is a rich literature on the convergence analysis of $n$ interacting-bodies/particle system to the corresponding limit (also known as the McKean-Vlasov system \cite{mckean1966class,mckean1967propagation}). Recent developments in this field can be found in \cite{lacker2018strong,lacker2018mean,jabir2019rate}. In general, the distance between an $n$-body empirical measure and its limit is of order $n^{-c/d}$, where $d$ is the dimension of one body and $c$ is a constant independent of $d$.
Such results have been established in various settings, from the simple case of $n$-independent samples drawn from a given distribution \cite{dudley1969speed,fournier2015rate,weed2019sharp}, to complicated cases of the McKean-Vlasov system~\cite{dos2018simulation} and MGFs \cite{carmona2013probabilistic}. In many interesting applications ({\it e.g.}, the construction of $\varepsilon$-Nash equilibria \cite{carmona2013probabilistic}), $d$ can be so large that the resulting convergence rate is extremely slow. This phenomenon is referred to as the curse of dimensionality (CoD), the main challenge in high-dimensional analysis and algorithms. 

The analysis in \cite{dudley1969speed,fournier2015rate,weed2019sharp,dos2018simulation,carmona2013probabilistic} suggests that the CoD phenomenon is related to the usage of the Wasserstein metric and the type of interaction kernels that drive the interaction between bodies. In fact, it is well-known that the convergence of empirical measures under the Wasserstein metric presents the CoD \cite{dudley1969speed} for any distribution that is absolutely continuous with respect to the Lebesgue measure on $\bR^d$. 

In this paper, we propose a new class of dimension-free metrics for the convergence analysis of mean-field problems. Specifically, we take the form of integral probability metrics (IPMs),
\begin{equation}\label{def_ipm}
    \rmD_\Phi(\mu, \mu') = \sup_{f \in \Phi} \left\vert\int f \ud \mu - f \ud \mu'\right\vert,
\end{equation}
and impose a set of criteria for selecting the test function class $\Phi$ to guarantee that the convergence rate under various settings is dimension-free. Our criteria mainly build on the function class's empirical Rademacher complexity, allowing the test functions to be the reproducing kernel Hilbert spaces (RKHSs), the Barron function space, and flow-induced function spaces, just to name a few. The choice of RKHSs is closely related to the maximum mean discrepancy (MMD)\cite{borgwardt2006integrating} as a tool for statistical tests to check if two sets of observations are generated by the same distribution. Therein, for computational efficiency, the test function space is chosen as the unit ball of RKHSs. 

Beating the CoD is also a central topic in the machine learning community.
One of the core problems in the high-dimensional analysis of machine learning models is identifying an appropriate function space equipped with an appropriate norm that can control the approximation and estimation errors of a particular machine learning model.
This perspective is closely related to the proposed probability metrics, and we should already point out to the readers that all proposed test function classes originate from machine learning models. Reproducing kernel Hilbert spaces are particularly important in statistical learning theory due to the representer theorem established in \cite{kimeldorf1971some}. The
Barron space \cite{weinan2019barron} was introduced for analyzing two-layer neural network models where optimal direct and inverse approximation theorems hold, as well as the {\it a priori} estimate \cite{ma2019priori}. The flow-induced function spaces \cite{weinan2019barron} were introduced for analyzing residual neural networks \cite{he2016deep}, which have wide applications in computer vision and scientific machine learning.

\medskip
Our main results are summarized as follows:
\begin{enumerate}
\item We propose a novel metric to measure the distance of probability measures by imposing selection criteria (Assumption~\ref{function_class_assumption}) for test functions $\Phi$ in \eqref{def_ipm}, yielding a dimension-free metric for the convergence of empirical measures associated with independent samples drawn from a given distribution (Theorem~\ref{thm: iid}); 

\item We generalize the results in \cite{yang2020generalization}:
given a target distribution being a bias potential model and a distribution close to the target measured by our proposed probability metric, we can generate a distribution close to the target in terms of the Wasserstein metric and relative entropy (Theorem~\ref{thm:regularity_improving}).
In this sense, we can transform the empirical measure into a new distribution close to the target in the Wasserstein metric without CoD;

\item The convergence result (Theorem~\ref{thm: iid}) is extended to independent identically distributed (i.i.d.) stochastic processes by imposing assumptions (Assumption~\ref{assump:module}) on their modulus of continuity (Theorem~\ref{thm: process});

\item We give three classes of test functions (reproducing kernel Hilbert spaces, Barron space, and flow-induced function spaces) for which the criteria in Assumption~\ref{function_class_assumption} are satisfied (Theorems~\ref{thm: rkhs i.i.d.}, \ref{thm: Property_Barron_Space} and \ref{thm: compositional});

\item The convergence of the empirical measure associated with an $n$-particle system to the distribution of the McKean-Vlasov stochastic differential equation is shown to be free of CoD (Theorem~\ref{thm:MV-Expectation});

\item We show that the construction of an $\varepsilon$-Nash equilibrium for a homogeneous $n$-player game by its mean-field limit has no CoD (Theorem~\ref{thm:MFG}), {\it i.e.} $\varepsilon$ is independent of $d$.
\end{enumerate}

\medskip
{\bf\noindent Notations.}
We use $\mathcal{P}(\bR^d)$ to denote the space of probability measures on $\RR^d$. $\mathcal{P}^p(\bR^d)$ with $p \geq 1$ denotes the subspace of $\mathcal{P}(\RR^d)$ of probability measures with finite $p^{th}$-moment, {\it i.e.}, $\mu \in \mathcal{P}^p(\bR^d)$ if
\begin{equation}
    M_p(\mu) := \left(\int_{\RR^d} \|x\|^p \ud\mu(x)\right)^{1/p} < +\infty.
\end{equation}
We will primarily work with probability measures with finite first and second moments, i.e., $\mathcal{P}^1(\bR^d)$ and $\mathcal{P}^2(\bR^d)$.
We use $\|\cdot\|$ to denote the Euclidean norm and define the Lipschitz constant with respect to the Euclidean norm:
\begin{equation*}
    \text{Lip}(f) = \sup_{x,y,x\neq y} \frac{|f(x)-f(y)|}{\|x-y\|}.
\end{equation*}
We denote by $\delta_{x_0}$ the delta distribution at $x_0$.

\section{A Class of Integral Probability Metrics}\label{sec:GMMD}

To define a metric on the space of probability measures, one natural approach is to choose a suitable class ${\Phi}$ of test functions on $\bR^d$ and compare the difference of integrals.
\begin{defn}[Integral Probability Metrics {\cite{muller1997integral}}]\label{dfn:GMMD}
Let ${\Phi}$ be a class of measurable functions on $\bR^d$ such that
\begin{equation}
    \sup_{f \in {\Phi}} |f(x)| \le C(1 + \|x\|),
\end{equation}
for a constant $C > 0$ depending on ${\Phi}$.
Then for any $\mu, \mu' \in \mathcal{P}^1(\bR^d)$, the integral probability metric (IPM)  $\rmD_{\Phi}$ associated to $\Phi$ is defined as:
\begin{equation}\label{def:GMMD}
    \rmD_{\Phi}(\mu,\mu') = \sup_{f \in {\Phi}}|\int_{\bR^d}f\rmd(\mu - \mu')|.
\end{equation}
\end{defn}
In \cite{zolotarev1984probability}, these metrics are called \emph{probability metrics with a $\zeta$-structure}. In this paper, following \cite{muller1997integral}, we will stick to the more intuitive terminology IPM. Many probability metrics are based on the comparison of integrals of certain functions, for instance,
\begin{itemize}
    \item the class of 1-Lipschitz functions, which leads to 1-Wasserstein metric $\mathcal{W}_1$;
    \item all functions $\bm{1}_{[t, \infty)}$, $t \in \RR$, which gives the Kolmogorov metric on $\mc{P}(\RR)$; 
    \item all functions $\bm{1}_B$ with $B$ being a Borel set on $\RR^d$, which leads to the total variation metric (in fact the set of continuous functions is sufficient); 
    \item the unit ball of a reproducing kernel Hilbert space (RKHS), which yields the maximum mean discrepancy (MMD) defined in \cite{borgwardt2006integrating}.
\end{itemize}

We are interested in the metrics with ``dimension-free'' properties, for instance, that the empirical measure obtained from $n$ independent samples from a given measure $\mu$ approaches $\mu$ with a convergence speed not depending on $d$ explicitly (for a precise statement, see Theorem~\ref{thm: iid}~(c)). In contrast to working with a particular class of functions (as in $\mc{W}_1$ or MMD), we pose conditions on ${\Phi}$, presented in Assumption~\ref{function_class_assumption}, to fulfill our goal. Later in Section~\ref{sec:testfunction}, we shall discuss several classes of test functions, including RKHS, Barron space, and flow-induced function spaces, where Assumption~\ref{function_class_assumption} is satisfied.

\begin{assu}[function class]\label{function_class_assumption}
The set ${\Phi}$ satisfies the following properties:
\begin{enumerate}
    \item[(a)] If $\mu$ is a signed measure on $\bR^d$, 
    \begin{equation}
        \int_{\bR^d}f \rmd \mu = 0, \; \forall f \in {\Phi} \Rightarrow \mu \equiv 0;
    \end{equation}
    \item[(b)]There exist two constants $A_1 := \sup_{f \in \Phi} \mathrm{Lip}(f) < +\infty$ and $A_2 := \sup_{f \in \Phi} |f(0)| < +\infty$;
    \item[(c)]
   There exists a constant $A_3 > 0$, such that for any $\mathcal{X}= \{x^1,\dots,x^n\} \subset \bR^{d}$, the empirical Rademacher complexity satisfies
    \begin{equation}
        \mathrm{Rad}_n({\Phi},\mathcal{X}) \coloneqq \frac{1}{n}\EE \sup_{f \in {\Phi}} |\sum_{i=1}^n\xi_i f(x^i)| \le \frac{A_3}{n} \sqrt{\sum_{i=1}^n(\|x^i\|^2 + 1)},
    \end{equation}
    where $\xi_1,\dots,\xi_n$ are i.i.d. random variables drawn from the Rademacher distribution, {\it i.e.}, $\mathbb{P}(\xi_i = 1) = \mathbb{P}(\xi_i = -1) = \frac{1}{2}$.
\end{enumerate}
\end{assu}

\begin{rem}
Given any $\Phi$ satisfying Assumption \ref{function_class_assumption}, we can define another function class:
\begin{equation}
    \Phi' = \{f - f(0): f \in \Phi\}.
\end{equation}
Then it is obvious that $\rmD_\Phi(\mu,\mu') = \rmD_{\Phi'}(\mu,\mu')$ for any $\mu,\mu' \in \mathcal{P}^1(\bR^d)$ and $\Phi'$ satisfies that
\begin{enumerate}
\item[(a')] If $\mu$ is a signed measure on $\bR^d$, 
    \begin{equation}
       \mu(\bR^d) = 0, \int_{\bR^d}f \rmd \mu = 0, \; \forall f \in {\Phi'} \Rightarrow \mu \equiv 0;
    \end{equation}
    \item[(b')]$\sup_{f \in \Phi'} \mathrm{Lip}(f) = A_1 < +\infty$ and $ \sup_{f \in \Phi'} |f(0)| = 0$;
    \item[(c')]
  For any $\mathcal{X}= \{x^1,\dots,x^n\} \subset \bR^{nd}$, the empirical Rademacher complexity satisfies
    \begin{equation}
        \mathrm{Rad}_n({\Phi'},\mathcal{X}) \coloneqq \frac{1}{n}\EE \sup_{f \in {\Phi'}} |\sum_{i=1}^n\xi_i f(x^i)| \le \frac{A_3'}{n} \sqrt{\sum_{i=1}^n(\|x^i\|^2 + 1)},
    \end{equation}
    where $\xi_1,\dots,\xi_n$ are i.i.d. random variables drawn from the Rademacher distribution, and $A_3'$ satisfies $A_3' = A_2 + A_3$ with $A_2$, $A_3$ defined from the original class $\Phi$.
\end{enumerate}
    We can replace Assumption \ref{function_class_assumption} by (a'), (b'), and (c'), and all properties of the IPM in this work still hold. We choose to use Assumption \ref{function_class_assumption} which allows $f(0) \neq 0$, and introduce $A_2 = \sup_{f \in \Phi}|f(0)|$, mainly because this is the case for most examples we discussed in Section~\ref{sec:testfunction}. 
\end{rem}

\begin{rem}
Assumption \ref{function_class_assumption} (c) is crucial for overcoming the CoD in our case. It is stronger than the usual estimation of the Rademacher complexity that depends on $n^{-1/2} [\max_{1\le i \le n}\|x^i\|+1]$ rather than $n^{-1/2} \sqrt{\frac{1}{n}\sum_{i=1}^n(\|x^i\|^2+1)}$. Rademacher complexity measures the richness of a function class with respect to a specific probability distribution. It is a powerful tool to bound the generalization error when learning the function in the class through empirical risk minimization. 
It is also closely related to other concepts for measuring the richness of a function class, such as the covering number and fat-shattering dimension, which will be discussed below. In addition, by comparing Theorem~\ref{thm: iid} (b) below and  \cite[Corollaries~3.2 and 3.4]{wojtowytsch2020kolmogorov}), we know that in high dimensions, the richness of any class of functions satisfying Assumption~\ref{function_class_assumption} is relatively small compared to the class of all $A_1$-Lipschitz functions. We refer to~\cite{bartlett2002rademacher,shalev2014understanding} for further information about the Rademacher complexity.
\end{rem}

We recall below the definitions of the covering number and fat-shattering dimension and show how to estimate the Rademacher complexity by them. These estimations provide more criteria for checking if a function class satisfies Assumption \ref{function_class_assumption} (c).
\begin{defn}[covering number]
    Given a function class $\Phi$ on $\bR^d$, $\mathcal{X} = \{x^1,\dots,x^n\} \subset \bR^{d}$ and $\epsilon > 0$, a subset $\hat{\Phi} \subset \Phi$ is a $\epsilon$-covering of $\Phi$ if for any $f \in \Phi$, there exists $\hat{f} \in \hat{\Phi}$ such that 
    \begin{equation}
        \|f - \hat{f}\|_{L^2(\mathcal{X}) }:= \sqrt{\frac{1}{n}\sum_{i=1}^n[f(x^i) - \hat{f}(x^i)]^2}\le \epsilon.
    \end{equation}
    The covering number of $\Phi$ is the cardinality  of the smallest $\epsilon$-covering of $\Phi$:
    \begin{equation}
        \mathcal{C}(\Phi,\epsilon, L^2(\mathcal{X})) = \min\{|\hat{\Phi}|: \hat{\Phi} \text{ is a $\epsilon$-covering of } \Phi\}.
    \end{equation}
\end{defn}
\begin{defn}[fat-shattering dimension]
    Given a function class $\Phi$  on $\bR^d$, $\mathbb{I} \subset \bR^d$ and $\epsilon > 0$. We say $\mathbb{I}$ is $\epsilon$-shattered by $\Phi$ if there exists $h: \mathbb{I} \rightarrow \bR$ such that for any $\mathbb{J} \subset \mathbb{I}$, there exists $f \in \Phi$ such that
    \begin{equation}
        f(x) \le h(x), \forall x \in \mathbb{J},\, f(x) > h(x)+\epsilon, \forall x \in \mathbb{I}\backslash \mathbb{J}.
    \end{equation}
    The fat-shattering dimension of $\Phi$ is the largest cardinality of $\epsilon$-shattering: $$\mathrm{vc}(\Phi,\epsilon) = \max\{|\mathbb{I}|: \mathbb{I} \text{ is $\epsilon$-shattered by } \Phi\}.$$
\end{defn}

If $\Phi$ is a $\{0,1\}$-value function class, the fat-shattering dimension coincides with the classic Vapnik-Chernoveniks (VC) dimension; see \cite{matousek2013lectures} for a detailed introduction of the VC dimension. We refer to \cite{van2014probability} for a detailed introduction of the covering number, VC dimension, and fat-shattering dimension.

The following famous inequality by Dudley \cite{dudley} gives an upper bound of the Rademacher complexity by the covering number:
\begin{equation}
    \mathrm{Rad}_n(\Phi,\mathcal{X}) \le \inf_{\epsilon > 0}\{4\epsilon+ \frac{12}{\sqrt{n}} \int_{\epsilon}^{c}\sqrt{\log\mathcal{C}(\Phi,t,L^2(\mathcal{X}))}\rmd t\},
\end{equation}
where $c = \sup_{f \in \Phi} \sqrt{\sum_{i=1}^n f^2(x_i)/n}$. 
Let $p > 0$. We will use the $O(\cdot)$ notation to ignore the constant term which depends only on $p$. Now assume that the function class $\Phi$ satisfies 
\begin{equation}
    \log \mathcal{C}(\Phi, t ,L^2(\mathcal{X})) \le O(\frac{c^p}{t^p}),
\end{equation}
then one has that when $p > 2$,
\begin{equation}
      \mathrm{Rad}_n(\Phi,\mathcal{X}) \le O(\inf_{\epsilon > 0}\{\epsilon + c^{\frac{p}{2}}n^{-\frac{1}{2}}\epsilon^{1-\frac{p}{2}}- cn^{-\frac{1}{2}}\}) \le O(cn^{-\frac{1}{p}});
\end{equation}
when $p = 2$,
\begin{equation}
    \mathrm{Rad}_n(\Phi,\mathcal{X}) \le O(\inf_{\epsilon > 0}\{\epsilon + cn^{-\frac{1}{2}}[\log c - \log \epsilon]\})\le O(cn^{-\frac{1}{2}} + cn^{-\frac{1}{2}}[\log c - \log c n^{-\frac{1}{2}}]) = O(cn^{-\frac{1}{2}}\log n);
\end{equation}
and when $0 < p < 2$,
\begin{equation}
    \mathrm{Rad}_n(\Phi,\mathcal{X}) \le O(\inf_{\epsilon > 0}\{\epsilon + c n^{-\frac{1}{2}} - c^{\frac{p}{2}} n^{-\frac{1}{2}}\epsilon^{1-\frac{p}{2}}\}) = O(c n^{-\frac{1}{2}}).
\end{equation}
In summary,
\begin{equation}\label{covering}
    \mathrm{Rad}_n(\Phi,\mathcal{X}) \le \begin{dcases}
        O(c n^{-\frac{1}{p}}), &\text{ when } p > 2\\
        O(c n^{-\frac{1}{2}}\log n), &\text{  when } p = 2\\
        O(c n^{-\frac{1}{2}}), &\text{ when } 0 < p < 2.
    \end{dcases}
\end{equation}
Noticing that if $\Phi$ satisfies Assumption \ref{function_class_assumption} (b), we have that
\begin{equation}
    c = \sup_{f \in \Phi} \sqrt{\sum_{i=1}^n \frac{f^2(x_i)}{n}}\le \frac{\sqrt{2}\max\{A_1,A_2\}}{\sqrt{n}}\sqrt{\sum_{i=1}^n[\|x^i\|^2+1]}.
\end{equation}
Therefore, when $ p < 2$, the function class $\Phi$ satisfies Assumption \ref{function_class_assumption} (c). When $p \ge 2$, the convergence rate of the Rademacher complexity in \eqref{covering} is slower than the rate $n^{-\frac12}$ in Assumption \ref{function_class_assumption} (c), but most results in this work still hold if we change the convergence rate to $n^{-\frac{1}{p}}$ or $n^{-\frac{1}{2}}\log n$ in these results. 

The fat-shattering dimension can be used to bound the covering number and hence bound the Rademacher complexity. Assume that there exists $\xi: \mathbb{R}^+ \rightarrow \mathbb{R}^+$ and $\alpha > 1$ such that
\begin{equation}
    \mathrm{vc}(\Phi,\epsilon)\le \xi(\epsilon), \quad \xi(\alpha\epsilon)\le \frac{\xi(\epsilon)}{8},
\end{equation}
then for any $\mathcal{X} = \{x^1,\dots,x^n\} \subset \bR^d $, there exists $C > 0$ depending only on $\alpha$ such that
\begin{equation}
    \log\mathcal{C}(\Phi,\epsilon, L^2(\mathcal{X})) \le C \xi(C\epsilon).
\end{equation}
For further details, see \cite[Theorem 1.3]{rudelson2006combinatorics} and \cite[Corollary 7.48]{van2014probability}.

In our case, if $\xi(\epsilon) = \epsilon^{-p}$, one can obtain that  $\log\mathcal{C}(\Phi,\epsilon, L^2(\mathcal{X})) = O(\epsilon^{-p})$ and then use inequality \eqref{covering} to bound the Rademacher complexity.

The subsection below highlights properties of the metric $\rmD_{\Phi}$ for random variables when the function class $ {\Phi}$ satisfies Assumption~\ref{function_class_assumption}.
For comparison purpose, we recall the $p$-Wasserstein metric $\mathcal{W}_p$ defined as follows
\begin{equation}\label{def:wp}
    \mathcal{W}_p(\mu,\mu') = \Bigl(\inf_{\gamma \in \Gamma(\mu,\mu')}\int_{\bR^d\times\bR^d}\|x-y\|^p\rmd \gamma(x,y)\Bigr)^{1/p},
\end{equation}
where $\mu,\mu' \in \mathcal{P}^p(\bR^d)$ and $\Gamma(\mu,\mu')$ denotes the collection of all probability distributions on $\bR^d \times \bR^d$ with marginals $\mu$ and $\mu'$ on the first and second arguments, respectively. Note that Definition~\ref{dfn:GMMD} with $\Phi = \{\text{all continuous 1-Lipschitz functions from }\RR^d \text{ to } \RR\}$ admits the dual representation of \eqref{def:wp} with $p = 1$ ({\it cf.} \cite[Theorem 1.3]{basso2015hitchhiker}). We also introduce the relative entropy or Kullback-Leibler divergence \begin{equation}
    \mathcal{H}(\mu | \mu') \coloneqq \EE^\mu\left(\log(\frac{\ud  \mu}{\ud \mu'})\right),
\end{equation}
and the total variation distance
\begin{equation}
    \|\mu - \mu'\| = \sup_{A \subset \bR^d: \text{Borel measurable set}}|\mu(A) - \mu'(A)|.
\end{equation}

\subsection{Convergence Analysis for Random Variables}
\begin{thm}\label{thm: iid}
 Under Assumption \ref{function_class_assumption}, we have:

\begin{enumerate}
    \item[(a)] $\rmD_{\Phi}$ is a metric on $\mathcal{P}^1(\bR^d)$. In addition, if $\mu,\mu' \in \mathcal{P}^2(\bR^d)$,
    \begin{equation}
        \rmD_{\Phi}(\mu, \mu') \le A_1 \mathcal{W}_1(\mu, \mu') \le A_1 \mathcal{W}_2(\mu, \mu').
    \end{equation}
    
\item[(b)] Let $K$ be a compact set in $\bR^d$, $\{\mu_i\}_{i=1}^{\infty}, \mu \in \mathcal{P}(\bR^d)$ such that $\mu_i(K) = \mu(K) = 1$ for $ i \ge 1$. Then the following statements are equivalent:
\begin{itemize}
    \item[1.] $\mu_i$ converges to $\mu$ in the weak sense;
    \item[2.] $\lim_{i\rightarrow \infty}\rmD_\Phi(\mu_i,\mu) = 0$.
\end{itemize}
In other words, $\rmD_\Phi$ metrizes the weak convergence of measures on compact sets.

    \item[(c)]   
    Given $\mu \in \mathcal{P}^2(\bR^d)$, let $X^1,\dots,X^n$ be i.i.d. random variables drawn from the distribution $\mu$ and 
\begin{equation}
    \bar{\mu}^n = \frac{1}{n}\sum_{i=1}^n \delta_{X^i}
\end{equation}
be the empirical measure of $X^1, \ldots, X^n$. Then,
\begin{align}
    &\EE \rmD_{{\Phi}}(\mu,\bar{\mu}^n)
    \le \frac{2A_3}{\sqrt{n}} \sqrt{\int_{\bR^d}[\|x\|^2+1] \rmd \mu(x)},\\
    &\EE[\rmD_{\Phi}^2(\mu, \bar{\mu}^n)]\le \frac{A_1^2}{n} \int_{\bR^d}\|x\|^2 \rmd \mu(x) + \frac{4A_3^2}{n}\int_{\bR^d}[\|x\|^2+1] \rmd \mu(x).
\end{align}

\item[(d)] If $\mu$ satisfies the $\mathrm{T}_1$ inequality ({\it cf.} \cite{gozlan2010transport}), 
\begin{equation}
    \mathcal{W}_1^2(\mu,\tilde{\mu}) \le 2\kappa^2\mathcal{H}(\tilde{\mu}|\mu) \quad \forall \tilde{\mu} \ll \mu, 
\end{equation}
(an equivalent condition of the $\mathrm{T}_1$ inequality is that, there exists a constant $\delta > 0$ such that $\int_{\bR^d}\int_{\bR^d}\exp(\delta \|x-y\|^2)\rmd \mu(x) \rmd \mu(y)< +\infty$; see \cite[Theorem 2.3]{djellout2004transportation}),
then,
\begin{equation}
    \mathbb{P}\left(\rmD_{\Phi}(\mu,\bar{\mu}^n) -\frac{2A_3}{\sqrt{n}} \sqrt{\int_{\bR^d}[\|x\|^2+1] \rmd \mu(x)} \ge a\right) \le \exp\left(-\frac{na^2}{2A_1^2\kappa^2}\right).
\end{equation} 
\end{enumerate}
\end{thm}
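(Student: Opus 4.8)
The plan is to establish the three parts in turn, each relying on a standard tool.

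\textbf{Part (a).} First I would check finiteness: Assumption~\ref{function_class_assumption}(b) gives $|f(x)|\le A_2+A_1\|x\|$ for every $f\in{\Phi}$, so $|\int f\rmd(\mu-\mu')|\le 2A_2+A_1(M_1(\mu)+M_1(\mu'))$ uniformly in $f$, hence $\rmD_{\Phi}(\mu,\mu')<\infty$ on $\mathcal{P}^1(\bR^d)$. Non-negativity, symmetry and the triangle inequality are immediate from linearity of the integral followed by taking suprema, and $\rmD_{\Phi}(\mu,\mu')=0$ forces $\int f\rmd(\mu-\mu')=0$ for all $f\in{\Phi}$, so Assumption~\ref{function_class_assumption}(a) applied to the finite signed measure $\mu-\mu'$ gives $\mu=\mu'$. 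For the Wasserstein comparison, fix $f\in{\Phi}$ and a coupling $\gamma\in\Gamma(\mu,\mu')$; writing $\int f\rmd(\mu-\mu')=\int(f(x)-f(y))\rmd\gamma(x,y)$ and using $\mathrm{Lip}(f)\le A_1$ yields $|\int f\rmd(\mu-\mu')|\le A_1\int\|x-y\|\rmd\gamma$; taking the infimum over $\gamma$ and the supremum over $f$ gives $\rmD_{\Phi}\le A_1\mathcal{W}_1$, and $\mathcal{W}_1\le\mathcal{W}_2$ follows by applying Cauchy--Schwarz under the optimal quadratic coupling.

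\textbf{Part (b).} For the first bound I would run the classical symmetrization argument: introducing an independent ghost sample $X'^1,\dots,X'^n\sim\mu$ and Rademacher signs $\xi_i$,
\[
\EE\,\rmD_{\Phi}(\mu,\bar\mu^n)=\EE\sup_{f\in{\Phi}}\Bigl|\tfrac1n\textstyle\sum_i f(X^i)-\EE f\Bigr|\le 2\,\EE\,\mathrm{Rad}_n\bigl({\Phi},\{X^1,\dots,X^n\}\bigr),
\]
and Assumption~\ref{function_class_assumption}(c) together with Jensen's inequality (concavity of $\sqrt{\cdot}$) bounds the right-hand side by $\tfrac{2A_3}{n}\sqrt{n\int(\|x\|^2+1)\rmd\mu}$, which is the claimed quantity. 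For the second-moment bound I would write $\EE[\rmD_{\Phi}^2]=\mathrm{Var}(\rmD_{\Phi}(\mu,\bar\mu^n))+(\EE\,\rmD_{\Phi}(\mu,\bar\mu^n))^2$; the square of the first bound gives the $\tfrac{4A_3^2}{n}\int(\|x\|^2+1)\rmd\mu$ term, and for the variance I would invoke the Efron--Stein inequality: replacing $X^i$ by an independent copy changes $Z=\rmD_{\Phi}(\mu,\bar\mu^n)$ by at most $\sup_{f\in{\Phi}}\tfrac1n|f(X^i)-f(X'^i)|\le\tfrac{A_1}{n}\|X^i-X'^i\|$, so $\mathrm{Var}(Z)\le\tfrac12\sum_i\tfrac{A_1^2}{n^2}\,\EE\|X^i-X'^i\|^2\le\tfrac{A_1^2}{n}\int\|x\|^2\rmd\mu$ using $\EE\|X-X'\|^2=2\int\|x\|^2\rmd\mu-2\|\EE X\|^2$; all moments are finite since $\mu\in\mathcal{P}^2(\bR^d)$.

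\textbf{Part (c).} Here the idea is a dimension-free concentration bound for $G(x^1,\dots,x^n):=\rmD_{\Phi}(\mu,\tfrac1n\sum_i\delta_{x^i})$. The triangle inequality and the Lipschitz bound give $|G(x)-G(y)|\le\tfrac{A_1}{n}\sum_i\|x^i-y^i\|\le\tfrac{A_1}{\sqrt n}(\sum_i\|x^i-y^i\|^2)^{1/2}$ by Cauchy--Schwarz, so $G$ is $\tfrac{A_1}{\sqrt n}$-Lipschitz on $(\bR^d)^n$ with the Euclidean metric. Since $\mu$ satisfies $\mathrm{T}_1$ with constant $\kappa^2$, tensorization of the $\mathrm{T}_1$ inequality shows $\mu^{\otimes n}$ satisfies $\mathrm{T}_1$ with the same constant $\kappa^2$ on the Euclidean product, and the Bobkov--Götze dual characterization then yields $\EE e^{\lambda(G-\EE G)}\le\exp(\kappa^2A_1^2\lambda^2/(2n))$ for all $\lambda>0$, whence $\PP(G\ge\EE G+a)\le\exp(-na^2/(2A_1^2\kappa^2))$ by Chernoff's bound. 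Replacing $\EE G=\EE\,\rmD_{\Phi}(\mu,\bar\mu^n)$ by the upper bound from part (b) and using monotonicity of the tail gives the stated inequality.

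\textbf{Main obstacle.} Parts (a) and the symmetrization step are routine. The delicate point is in part (c): one must justify that $\mathrm{T}_1$ tensorizes with the constant preserved under the Euclidean product metric (equivalently, that $\mathrm{T}_1$ is equivalent to dimension-free Gaussian concentration, in the spirit of Gozlan's characterization / Djellout--Guillin--Wu), and track that the $\sqrt n$ produced by Cauchy--Schwarz in the Lipschitz estimate for $G$ is exactly what yields the correct $n$ (rather than $n^2$) in the exponent. The Efron--Stein estimate in part (b) is the other mildly non-routine ingredient, though straightforward once set up.
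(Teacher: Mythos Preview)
Your arguments for parts (a) and (b) are correct and essentially identical to the paper's: the metric properties follow from Assumption~\ref{function_class_assumption}(a)--(b), the first-moment bound from symmetrization plus the Rademacher complexity hypothesis, and the second-moment bound from Efron--Stein with the coordinate-wise Lipschitz constant $A_1/n$.

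Part (c), however, has a genuine gap. Your claimed tensorization---that $\mu^{\otimes n}$ satisfies $\mathrm{T}_1$ with the \emph{same} constant $\kappa^2$ under the Euclidean ($\ell^2$) product metric---is false in general. By the Bobkov--G\"otze characterization, that statement is exactly ``every $1$-Lipschitz (in $\ell^2$) function on $(\bR^d)^n$ has sub-Gaussian fluctuations with proxy $\kappa^2$, uniformly in $n$''; but this dimension-free Gaussian concentration is precisely Gozlan's characterization of $\mathrm{T}_2(\kappa^2)$, which is strictly stronger than the $\mathrm{T}_1$ hypothesis. So the very step you flag as the ``delicate point'' does not go through: applying Cauchy--Schwarz to pass from the $\ell^1$-Lipschitz constant $A_1/n$ to the $\ell^2$-Lipschitz constant $A_1/\sqrt{n}$ throws away exactly the structure that makes the exponent correct.

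The paper's fix is to stay in the $\ell^1$ product metric $\|\bx-\bm{y}\|=\sum_i\|x^i-y^i\|$. There $\mathrm{T}_1$ does tensorize, but with constant $n\kappa^2$ (the paper cites \cite[Theorem~5.2]{delarue2020master}); since $G$ is $(A_1/n)$-Lipschitz in this metric, Bobkov--G\"otze gives a sub-Gaussian proxy $(n\kappa^2)(A_1/n)^2=A_1^2\kappa^2/n$, and Chernoff yields $\exp(-na^2/(2A_1^2\kappa^2))$. Equivalently, you can run the martingale/bounded-differences argument directly: each martingale increment is $(A_1/n)$-Lipschitz in a single coordinate, hence sub-Gaussian$(\kappa^2 A_1^2/n^2)$ by $\mathrm{T}_1$ on $\mu$, and summing $n$ of them gives proxy $A_1^2\kappa^2/n$. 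Either way, the key is to retain the coordinate-wise Lipschitz constant $A_1/n$ rather than the coarser $\ell^2$ constant $A_1/\sqrt{n}$.
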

\begin{proof}
For claim (a), by definition $\rmD_{\Phi}$ is symmetric and satisfies the triangle inequality. We only need to show that for any $\mu, \mu' \in \mathcal{P}^1(\bR^d)$,  $\rmD_{\Phi}(\mu, \mu') = 0$ leads to $\mu = \mu'$, which is ensured directly by Assumption~\ref{function_class_assumption}~(a). In addition, from Assumption~\ref{function_class_assumption}~(b), we deduce that $\rmD_{\Phi}(\mu_1,\mu_2)\le A_1 \mathcal{W}_1(\mu_1,\mu_2)$ by using the definition of $\mathcal{W}_1$. 
The relation between $\mathcal{W}_1$ and $\mathcal{W}_2$ is a classical result following from Jensen's inequality.

For claim (b), we first prove $1 \Rightarrow 2$. By Theorem 8.3.2 in \cite{bogachev2007measure}, we know that
\begin{equation}
    \lim_{i\rightarrow \infty} \mathcal{W}_1(\mu_i, \mu)  = 0.
\end{equation}
Then noticing that $\rmD_\Phi(\mu_i,\mu) \le A_1 \mathcal{W}_1(\mu_i,\mu)$, we obtain the result. For the claim $2 \Rightarrow 1$, we first notice that $\{\mu_i\}_{i=1}^{\infty}$ is tight. Then by Theorem 8.6.2 in \cite{bogachev2007measure}, every subsequence of $\{\mu_i\}_{i =1}^{\infty}$ has a weakly convergent subsequence, denoted by $\{\mu_{i_k}\}_{k=1}^{\infty}$. Let $\mu'$ be the limit of $\{\mu_{i_k}\}_{k=1}^{\infty}$. Then for any $f\in \Phi$,
\begin{equation}
    \lim_{k \rightarrow \infty} \int_{\bR^d} f \rmd \mu_{i_k} = \int_{\bR^d} f \rmd \mu'.
\end{equation}
Meanwhile, since $\lim_{k\rightarrow \infty}\rmD_\Phi(\mu_{i_k},\mu) = 0$, we have 
\begin{equation}
    \lim_{k \rightarrow \infty} \int_{\bR^d} f \rmd \mu_{i_k} = \int_{\bR^d} f \rmd \mu.
\end{equation}
Hence, for any $f \in \Phi$, $\int_{\bR^d} f \rmd \mu = \int_{\bR^d} f \rmd \mu'$, which means that $\mu \equiv \mu'$. In other words, every subsequence of $\{\mu_i\}_{i =1}^{\infty}$ has a weakly convergent subsequence to $\mu$. Since the weakly convergence can be metrized \cite[Theorem~8.3.2]{bogachev2007measure}, we know that $\{\mu_i\}_{i=1}^{\infty}$ weakly converge to $\mu$.

For claim (c), we first notice that
\begin{equation}
    \EE[\rmD_{\Phi}(\mu,\bar{\mu}^n)] = \frac{1}{n}\EE\left[\sup_{f \in {\Phi}}\Bigl|\sum_{i=1}^n [f(X^i) - \EE f(X^i)]\Bigr|\right]\le   \frac{2}{n}\EE\left[\sup_{f \in {\Phi}} \Bigl|\sum_{i=1}^n\xi_i f(X^i)\Bigr|\right],
\end{equation}
where in the last step we use the Rademacher complexity to bound the largest gap between the expectation of a function and its empirical version (see, {\it e.g.},~\cite[ Lemma 26.2]{shalev2014understanding}).
Using Assumption~\ref{function_class_assumption} (c),
one deduces
\begin{align}\label{Rademacher Expectation}
    \EE \rmD_{{\Phi}}(\mu,\bar{\mu}^n)&
    \le \frac{2A_3}{n}\EE\sqrt{\sum_{i=1}^n(\|X^i\|^2+1)} \le  \frac{2A_3}{n}\sqrt{\EE \left[\sum_{i=1}^n(\|X^i\|^2+1)\right]} 
    = \frac{2A_3}{\sqrt{n}} \sqrt{\int_{\bR^d}[\|x\|^2+1] \rmd \mu(x)}.
\end{align}
Now let $Y^1,\dots,Y^n$ be i.i.d. random variables drawn from the distribution $\mu$ and be independent of $X^1,\dots,X^n$.
By the Efron-Stein-Steele inequality ({\it cf.}~\cite[Theorem~5]{boucheron2003concentration}) 
that reads
\begin{equation}
    \mathrm{Var} (Z) \leq \frac{1}{2}\sum_{i=1}^n \EE[Z - Z_i']^2,
\end{equation}
for some measurable function $g$ of $n$ variables, $Z = g(X^1, \ldots, X^n)$ and $Z'_i = g(X^1, \ldots, Y^i, \ldots, X^n)$, and the uniform Lipschitz property of $f \in \Phi$, one has
{\small
\begin{align}
    &\mathrm{Var}(\rmD_{{\Phi}}(\mu,\bar{\mu}^n)) \\
    =~& \mathrm{Var}\left(\frac{1}{n}\sup_{f \in {\Phi}}\Bigl|\sum_{i=1}^n [f(X^i) - \EE f(X^i)]\Bigr|\right) \notag \\
    \le~& \frac{1}{2}\sum_{i=1}^n\EE\left[\left(\frac{1}{n}\sup_{f \in {\Phi}}\Bigl|\sum_{j=1}^n [f(X^j) - \EE f(X^j)]\Bigr| - \frac{1}{n}\sup_{f \in {\Phi}}\Bigl|\sum_{j=1,j\neq i}^n [f(X^j) - \EE f(X^j)] + f(Y^i) - \EE f(X^i)\Bigr|\right)^2\right] \notag \\
    \le~& \frac{A_1^2}{2n^2}\sum_{i=1}^n \EE\|X^i - Y^i\|^2 = \frac{A_1^2}{n}\int_{\bR^d}\|x\|^2 \rmd \mu(x).
\end{align}}
Therefore, we have
\begin{equation}
    \EE[\rmD^2_{{\Phi}}(\mu,\bar{\mu}^n)] = \mathrm{Var}(\rmD_{{\Phi}}(\mu,\bar{\mu}^n)) +    [\EE \rmD_{{\Phi}}(\mu,\bar{\mu}^n)]^2 \le \frac{A_1^2}{n}\int_{\bR^d}\|x\|^2 \rmd \mu(x) + \frac{4A_3^2}{n}\int_{\bR^d}[\|x\|^2+1] \rmd \mu(x). 
\end{equation}

For claim (d), we need some established concentration inequalities.
 Let $\mu^{\otimes n} \in \mathcal{P}(\RR^{d\times n})$ be the $n^{th}$ times product of $\mu$ and the distance between $\bx = (x^1,\dots,x^n),\bm{y}=(y^1,\dots,y^n) \in \bR^{d\times n}$ be 
\begin{equation}
    \|\bx - \bm{y}\| = \sum_{i=1}^{n}\|x^i - y^i\|.
\end{equation}
By Theorem 5.2 in \cite{delarue2020master}, for any $\tilde{\mu} \ll \mu^{\otimes n}$, we have
\begin{equation}
    \mathcal{W}_1^2(\mu^{\otimes n},\tilde{\mu}) \le 2n\kappa^2 \mathcal{H}(\tilde{\mu}|\mu^{\otimes n}).
\end{equation}
Define
\begin{equation}
    G(\bx) = \sup_{f \in \Phi}|\frac{1}{n}\sum_{i=1}^nf(x^i) - \int_{\bR^d}f\rmd\mu|.
\end{equation}
Combining the fact that
\begin{equation}
    |G(\bx) - G(\bm{y})| \le \frac{A_1}{n}\|\bx - \bm{y}\|,
\end{equation}
for any $\bx $ and $\bm{y}  \in \bR^{d \times n}$ with Theorem 5.1 in \cite{delarue2020master}, we have
\begin{equation}
    \PP\left(\rmD_{\Phi}(\mu,\bar{\mu}^n) - \EE \rmD_{\Phi}(\mu,\bar{\mu}^n)\ge a\right) = \mu^{\otimes n}\Bigl(G(\bx) - \int_{\bR^{d\times n}}G(\bx)\rmd \mu^{\otimes n}(\bx) \ge a\Bigr) \le \exp\left(-\frac{na^2}{2A_1^2\kappa^2}\right).
\end{equation}
Finally, we conclude our proof by the observation
\begin{equation}
    \EE \rmD_{\Phi}(\mu,\bar{\mu}^n) \le \frac{2A_3}{\sqrt{n}} \sqrt{\int_{\bR^d}[\|x\|^2+1] \rmd \mu(x)},
\end{equation}
from inequality \eqref{Rademacher Expectation}.
\end{proof}

The above theorem focuses on the convergence of $\bar \mu^n$ to $\mu$ in the sense of IPM satisfying Assumption~\ref{function_class_assumption}. It is well-known that the convergence of empirical measures under the Wasserstein metric faces the CoD \cite{dudley1969speed} for any distribution that is absolutely continuous with respect to the Lebesgue measure on $\bR^d$. However, as shown in \cite{yang2020generalization,yang2022generalization}, if the target distribution admits a specific representation, the bias potential model or the density model, we can use the empirical measure and the representation form to generate a new distribution that is close to the target distribution in the sense of the Wasserstein metric, total variation distance, or relative entropy. The following theorem, generalizing results in \cite{yang2020generalization,yang2022generalization}, shows that given a bias potential model or a density model as the target distribution and a distribution close to the target measured by our proposed IPM, we can generate a distribution close to the target one in terms of the Wasserstein metric, total variation distance or relative entropy.

\begin{thm}\label{thm:regularity_improving}
Suppose $P \in \mathcal{P}(\bR^d)$ is a known base distribution and $\Phi$ is a function class satisfying Assumption \ref{function_class_assumption}. 
\begin{enumerate}
    \item[(a)] Assume the target distribution $\mu \in \MCP^1(\bR^d)$ satisfies the bias potential model - that is, there exists $V^* \in \Phi$ such that
   \begin{equation}
        \int_{\bR^d} e^{-V^*(x)}\rmd P(x) < +\infty,\, \frac{\rmd \mu}{\rmd P} = \frac{e^{-V^*}}{\int_{\bR^d}e^{-V^*(x)}\rmd P(x)}.
    \end{equation}
     Let $\nu \in \mathcal{P}^1(\bR^d)$ be an accessible probability measure, define
     \begin{equation}
            \mathrm{Loss}_1(V) = \int_{\bR^d} V(x)\rmd \nu(x) + \log\left(\int_{\bR^d} e^{-V(x)}\rmd P(x)\right), 
    \end{equation}
    for $V \in \Phi$. Let $V' \in \Phi$ and $\mu' \in \mathcal{P}(\bR^d)$ satisfy
    \begin{equation}
         \int_{\bR^d} e^{-V'(x)}\rmd P(x) < +\infty,\, \frac{\rmd \mu'}{\rmd P} = \frac{e^{-V'}}{\int_{\bR^d}e^{-V‘’}\rmd P(x)}.
    \end{equation}
    Then $\inf_{V\in \Phi} \mathrm{Loss}_1(V) > -\infty$ and
    \begin{equation}
        \mathcal{H}(\mu|\mu') \le 2 \rmD_\Phi(\mu,\nu) + \mathrm{Loss}_1(V') - \inf_{V\in\Phi}\mathrm{Loss}_1(V).
    \end{equation}
     Assume additionally that $P$ has a compact support
     \begin{equation}
         K = \max\{\|x-y\|, x,y \in \mathrm{supp}(P)\} < +\infty,
     \end{equation}
     then
     \begin{equation}
         \mathcal{W}_1^2(\mu,\mu')\le \frac{K^2}{2}[2 \rmD_\Phi(\mu,\nu) + \mathrm{Loss}_1(V') - \inf_{V\in\Phi}\mathrm{Loss}_1(V)].
     \end{equation}
    \item[(b)] Assume the target distribution $\mu \in \MCP^1(\bR^d)$ satisfies the density model - that is, there exists $q^* \in \Phi$ such that
  $
       q^*= \frac{\rmd \mu}{\rmd P}.
   $
     Let $\nu \in \mathcal{P}^1(\bR^d)$ be an accessible probability measure, define
     \begin{equation}
            \mathrm{Loss}_2(q) = \sup_{f \in \mathcal{F}}|\int_{\bR^d}f\rmd \nu - \int_{\bR^d}fq\rmd P|, 
    \end{equation}
    for $q \in \Phi$. Let $q' \in \Phi$ and $\mu' \in \MCP(\bR^d)$ satisfy $q' = \frac{\rmd \mu'}{\rmd P}$. Then
    \begin{equation}
        \inf_{q \in \Phi}\mathrm{Loss}_2(q) \le \rmD_\Phi(\mu,\nu),
    \end{equation}
    and
    \begin{equation}
        \|\mu-\mu'\|_{TV}^2\le 2[\rmD_\Phi(\mu,\nu) + \mathrm{Loss}_2(q')].
    \end{equation}
     Assume additionally that $P$ has a compact support
     \begin{equation}
         K = \max\{\|x-y\|, x,y \in \mathrm{supp}(P)\} < +\infty,
     \end{equation}
     then
     \begin{equation}
         \mathcal{W}_1^2(\mu,\mu')\le 2K^2[\rmD_\Phi(\mu,\nu) + \mathrm{Loss}_2(q')].
     \end{equation}
\end{enumerate}
\end{thm}
\begin{proof}
For the claim (a), first noticing that  $\mathrm{Loss}_1(V)$ is lower bounded for $V\in \Phi$ using $|V(x)| \le A_1\|x\| + A_2$, one know that
\begin{align}
    \inf_{V \in \Phi} \mathrm{Loss}_1(V) &\ge -\int_{\bR^d}[A_1\|x\| + A_2]\rmd \nu(x) + \log(\int_{\bR^d}e^{-[A_1\|x\|+A_2]}\rmd P(x)) \\
    &\ge -\int_{\bR^d}[A_1\|x\| + A_2]\rmd \nu(x) + \log(\int_{\|x\| \le M}e^{-[A_1\|x\|+A_2]}\rmd P(x)) \\
    &\ge -\int_{\bR^d}[A_1\|x\| + A_2]\rmd \nu(x) + \log(e^{-MA_1 - A_2} P(\|x\| \le M)) \\
    &= -\int_{\bR^d}[A_1\|x\| + A_2]\rmd \nu(x) -(MA_1+A_2) +  \log(P(\|x\| \le M)) > -\infty,
\end{align}
where $M$ is large enough such that $P(\|x\|\le M) > 0$.
It is then easy to compute 
\begin{align}
    \mathcal{H}(\mu|\mu') &= \int_{\bR^d}(V'-V^*)(x)\rmd \mu(x) - \log  \left(\int_{\bR^d} e^{-V^*(x)}\rmd P(x)\right) + \log \left(\int_{\bR^d}e^{-V'(x)}\rmd P(x)\right) \notag
    \\ &\le 2 \rmD_\Phi(\mu,\nu) + \mathrm{Loss}(V') - \mathrm{Loss}(V^{*}) \le  2\rmD_\Phi(\mu,\nu)  + \mathrm{Loss}(V') - \inf_{V \in \Phi} \mathrm{Loss}(V) .
\end{align}
To estimate the Wasserstein metric between $\mu$ and $\mu'$, first notice the Pinsker's inequality \cite{pinsker1964information,csiszar1967information}:
\begin{equation}
    \|\mu-\mu'\|_{TV}^2 \le \frac{1}{2}\mathcal{H}(\mu|\mu').
\end{equation}
Recalling the definition of the 1-Wasserstein metric \eqref{def:wp} and $\mathrm{supp}(\mu),\mathrm{supp}(\mu') \in \mathrm{supp}(P)$, we have
\begin{equation}\label{thm2.2: eq1}
    \mathcal{W}_1(\mu,\mu')  = \inf_{\gamma \in \Gamma(\mu,\mu')}\int_{\bR^d\times\bR^d}\|x-y\|\rmd \gamma(x,y) \le K\inf_{\gamma \in \Gamma(\mu,\mu')}\int_{\bR^d\times\bR^d}\mathrm{1}_{x\neq y}\rmd \gamma(x,y) = K\|\mu-\mu'\|_{TV},
\end{equation}
which concludes the proof.
For the claim (b), first noticing that 
\begin{equation}
    \rmD_\Phi(\mu,\nu) = \mathrm{Loss}_2(q^*) \geq \inf_{q \in \Phi}\mathrm{Loss}_2(q).
\end{equation}
By the triangular inequality,
\begin{equation}
    \rmD_{\Phi}(\mu,\mu') \le \rmD_{\Phi}(\mu,\nu) + \rmD_{\Phi}(\nu,\mu') = \rmD_{\Phi}(\mu,\nu) + \mathrm{Loss}_2(q').
\end{equation}
Noticing that
\begin{equation}
    \rmD_{\Phi}(\mu,\mu') = \sup_{f \in \Phi}|\int_{\bR^d}f(q^* - q')\rmd P| \ge \frac{1}{2}\int_{\bR^d}|q^* - q'|^2\rmd P \ge \frac{1}{2}[\int_{\bR^d}|q^*-q|\rmd P]^2 = \frac{1}{2}\|\mu-\mu'\|_{TV}^2,
\end{equation}
as $\frac{1}{2}(q^* - q') \in \Phi$. The rest of claim (b) follows \eqref{thm2.2: eq1}.
\end{proof}

Theorem \ref{thm:regularity_improving} shows that if one knows that the target distribution satisfies a bias potential model or density model and can access to a distribution which is close to the target distribution with respect to a IPM (usually the empirical distribution by Theorem \ref{thm: iid}), one can then solve the optimization problem 
\begin{equation}
    \inf_{V \in \Phi} \mathrm{Loss}_1(V)\quad
\text{or} \quad
    \inf_{q\in \Phi} \mathrm{Loss}_2(q),
\end{equation}
and the resulting distribution is close to the target distribution with respect to the Wasserstein metric, total variation distance or relative entropy.
Besides the bias potential model and density model we study here, another important form of distribution representation is the generative adversarial network (GAN) \cite{goodfellow2014generative,arjovsky2017wasserstein}, which assumes $\mu = \phi\circ P$ (this notation stands for the measure $P$ push-forwarded by $\phi$) with a known base distribution $P \in \mathcal{P}(\bR^d)$ and $\phi$ lies in certain function classes. In practice, GAN has shown astonishing power in learning distribution \cite{brock2018large,donahue2019large}. However, how to establish similar theoretical results with respect to GAN is far from clear.

\subsection{Convergence Analysis for Stochastic Processes}

Theorems in the previous subsection focus on the measure $\mu$ on $\RR^d$. When dealing with empirical measures of a stochastic process such as discussing the convergence of the $n$-particle dynamics to the McKean-Vlasov system (see Section~\ref{sec:mvsde} for details), {\it i.e.},  $\mu \in \mathcal{P}(C([0,T]; \RR^d))$, we need to extend Theorem \ref{thm: iid} to Theorem \ref{thm: process} below. The results are similar, except for an additional term $\phi(n)$ coming from the regularity of the process. To this end, we introduce the following assumption.

\begin{assu}[modulus of continuity]\label{assump:module}
Given a probability measure $\mu$ on $C([0,T];\bR^d)$, there exist constants  $Q,\alpha,\beta > 0$, such that for any $h > 0$,
\begin{equation}\label{eq:module}
    \int_{C([0,T];\bR^d)}\Delta(x,h)\rmd \mu(x) \le Q h^\alpha \log^\beta\left(\frac{2T}{h}\right),
\end{equation}
where $\Delta(f, h)$ denotes the modulus of continuity of $f \in C([0,T]; \RR^d)$:
\begin{equation}\label{module of continuity}
    \Delta(f,h) = \sup_{t,s \in [0,T],|t-s|\le h}\|f(t) - f(s)\|.
\end{equation}
\end{assu}

\begin{rem}\label{rem_process_continuity}
 The condition \eqref{eq:module} is satisfied by many kinds of processes. For instance, see \cite[Theorem~1]{fischer2009moments} for results for Brownian motion and It\^{o} diffusion processes with $\alpha = \beta = \frac{1}{2}$, under quite generic conditions on the drift and diffusion coefficients. Note that the constant $Q$ derived therein may depend on the dimension $d$, while the result in our Theorem~\ref{thm: prior_estimation} does not.
\end{rem}

\begin{thm}\label{thm: process}
Under Assumptions~\ref{function_class_assumption} and \ref{assump:module}, 
\begin{enumerate}
    \item[(a)] Let $\mu$ be a probability distribution on $C([0,T];\bR^d)$ such that
\begin{equation}
    \int_{C([0,T];\bR^d)}\sup_{0\le t \le T}\|x_t\|^2\rmd \mu(x) < +\infty.
\end{equation}
Denote by $X^1, \cdots, X^n$ i.i.d. random processes drawn from $\mu$, and
 \begin{equation}
        \bar{\mu}_t^n = \frac{1}{n}\sum_{i=1}^n \delta_{X_t^i}
\end{equation}
as the empirical measure of $X_t^1,\dots,X_t^n$. Define $\mu_t = \MCL(X_t^1)$, then 
\begin{equation}\label{eq:EDprocess}
    \EE \left[\sup_{0\le t \le T}\rmD_{\Phi}(\mu_t,\bar{\mu}_t^n)\right] \le \phi(n),
\end{equation}
where
{\small\begin{align}
    \phi(n) =~& 2\left[\frac{A_3 + 8\max\{A_1,A_2\}}{\sqrt{n}} + 8\max\{A_1,A_2\}\sqrt{\frac{\log(2^{2\alpha}n)}{2\alpha n}}\right]\left(\int_{C([0,T];\bR^d)}\sup_{0\le t \le T}\|x_t\|^2\rmd \mu(x) +1\right)^{\frac{1}{2}}\notag \\
    &+ 2\frac{A_1Q T^\alpha}{\sqrt{n}}\left(\frac{\log(2^{4\alpha}n)}{2\alpha}\right)^{\beta} \notag \\
    =~& O(n^{-\frac{1}{2}}(\log n)^{\max\{\beta,\frac{1}{2}\}}).
\end{align}}
In addition,
\begin{equation}
        \EE \left[\sup_{0\le t \le T}\rmD^2_{\Phi}(\mu_t,\bar{\mu}_t^n)\right] \le \frac{A_1^2}{n}\int_{C([0,T];\bR^d)}\sup_{0\le t \le T}\|x_t\|^2\rmd \mu(x) + \phi^2(n).
    \end{equation}

\item[(b)] If $\mu$ satisfies the $\mathrm{T}_1$ inequality, that is,
\begin{equation}
    \mathcal{W}_1^2(\mu,\tilde{\mu}) \le 2\kappa^2\mathcal{H}(\tilde{\mu}|\mu) \quad \forall \tilde{\mu} \ll \mu, 
\end{equation}
where $\mathcal{H}$ is the relative entropy defined in Theorem~\ref{thm: iid} (d), then
\begin{equation}
    \mathbb{P}\left(\sup_{0\le t\le T}\rmD_{\Phi}(\mu_t,\bar{\mu}_t^n) - \phi(n) \ge a\right) \le \exp\left(-\frac{na^2}{2A_1^2\kappa^2}\right).
\end{equation} 
\end{enumerate}
\end{thm}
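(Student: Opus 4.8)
The plan is to reduce Theorem~\ref{thm: process} to an application of the machinery already developed for Theorem~\ref{thm: iid}, with the extra term $\phi(n)$ accounting for the time-regularity of paths. The key observation is that $\sup_{0\le t\le T}\rmD_\Phi(\mu_t,\bar\mu_t^n)$ is itself of the form $\sup$ over an enlarged test function class applied to the path-space empirical measure: writing $g_{f,t}(x)\coloneqq f(x_t)$ for $f\in\Phi$ and $t\in[0,T]$, one has $\sup_{0\le t\le T}\rmD_\Phi(\mu_t,\bar\mu_t^n)=\sup_{f\in\Phi,\,t\in[0,T]}\big|\frac1n\sum_{i=1}^n[g_{f,t}(X^i)-\EE g_{f,t}(X^i)]\big|$, where now $X^1,\dots,X^n$ are i.i.d. draws from $\mu\in\mathcal P(C([0,T];\bR^d))$.

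For part (a), I would first apply the standard symmetrization bound (as in the proof of Theorem~\ref{thm: iid}(b)) to get $\EE[\sup_{0\le t\le T}\rmD_\Phi(\mu_t,\bar\mu_t^n)]\le \frac2n\EE\sup_{f\in\Phi,\,t\in[0,T]}|\sum_i\xi_i f(X_t^i)|$. The next step is to control this Rademacher complexity over the two-parameter index set $\Phi\times[0,T]$. I would discretize $[0,T]$ into a grid of mesh $h$ (roughly $2T/h$ points), split $f(X_t^i)$ as $f(X_{t_k}^i)+[f(X_t^i)-f(X_{t_k}^i)]$ for the nearest grid point $t_k$, and bound the increment term using $\mathrm{Lip}(f)\le A_1$ together with the modulus-of-continuity control $\|X_t^i-X_{t_k}^i\|\le\Delta(X^i,h)$, then take expectations and invoke Assumption~\ref{assump:module}. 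For the grid part, a union bound over the finite grid combined with Assumption~\ref{function_class_assumption}(c) and a maximal-inequality/sub-Gaussian argument (the $\sum_i\xi_i f(x_{t_k}^i)$ being sub-Gaussian with the appropriate variance proxy) produces the $\sqrt{\log(2T/h)/n}$-type factors. Optimizing the choice of $h$ in $n$ (something like $h\sim T n^{-1/(2\alpha)}$ so that $h^\alpha\sim T^\alpha/\sqrt n$) yields the explicit constants in $\phi(n)$, including the $\log^\beta$ factor from \eqref{eq:module} and the $(\log n)^{1/2}$ factor from the grid union bound. The second inequality in (a) follows exactly as in Theorem~\ref{thm: iid}(b): apply the Efron–Stein–Steele inequality to the functional $G(\bx)=\frac1n\sup_{f,t}|\sum_j[f(x_t^j)-\EE f(x_t^j)]|$, using that replacing coordinate $i$ changes $G$ by at most $\frac{A_1}{n}\sup_{0\le t\le T}\|x_t^i-y_t^i\|$, hence $\mathrm{Var}\le\frac{A_1^2}{2n^2}\sum_i\EE\sup_t\|X_t^i-Y_t^i\|^2\le\frac{A_1^2}{n}\int\sup_t\|x_t\|^2\rmd\mu$, and then $\EE[\rmD^2]=\mathrm{Var}+(\EE\rmD)^2\le\frac{A_1^2}{n}\int\sup_t\|x_t\|^2\rmd\mu+\phi^2(n)$.

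For part (b), the argument mirrors Theorem~\ref{thm: iid}(c): the map $\bx\mapsto G(\bx)=\sup_{0\le t\le T}\rmD_\Phi((\mu_t,(\bar\mu^n)_t)$ viewed as a function on $C([0,T];\bR^d)^n$ is Lipschitz with constant $A_1/n$ with respect to the $\ell^1$-type metric $\|\bx-\bm y\|=\sum_i\sup_t\|x_t^i-y_t^i\|$ (again because $|f(x_t^i)-f(y_t^i)|\le A_1\sup_t\|x_t^i-y_t^i\|$), and $\mu$ satisfying $\mathrm T_1$ on path space tensorizes to give $\mathcal W_1^2(\mu^{\otimes n},\tilde\mu)\le 2n\kappa^2\mathcal H(\tilde\mu\,|\,\mu^{\otimes n})$ by the same tensorization result used earlier. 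Concentration of Lipschitz functions under the $\mathrm T_1$ inequality then gives $\PP(G-\EE G\ge a)\le\exp(-na^2/(2A_1^2\kappa^2))$, and substituting the bound $\EE G\le\phi(n)$ from part (a) finishes the claim.

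The main obstacle is the grid/discretization step in part (a): one needs to handle the supremum over the continuum $[0,T]$ carefully so that the trade-off between the discretization error (controlled via Assumption~\ref{assump:module}, scaling like $Qh^\alpha\log^\beta(2T/h)$) and the union-bound penalty over the grid (scaling like $\sqrt{\log(2T/h)/n}$ after a sub-Gaussian maximal inequality) produces exactly the stated constants, and one must verify that the increment terms $f(X_t^i)-f(X_{t_k}^i)$ can be pulled out of the Rademacher supremum uniformly in $f\in\Phi$ — which works precisely because the Lipschitz bound $A_1$ is uniform over $\Phi$ by Assumption~\ref{function_class_assumption}(b). Getting the interplay of the two logarithmic factors right (hence the $\max\{\beta,\tfrac12\}$ exponent) is the delicate bookkeeping; everything else is a routine adaptation of the $d$-space proof.
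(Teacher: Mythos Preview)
Your proposal is correct and follows essentially the same route as the paper: symmetrization, time-discretization with the increment controlled by $A_1$-Lipschitzness plus Assumption~\ref{assump:module}, a concentration/union bound over the grid, the choice $n'\approx n^{1/(2\alpha)}$, then Efron--Stein for the second moment and the $\mathrm T_1$ Lipschitz concentration for part (b). One point to make explicit: the ``sub-Gaussian'' input you invoke for the grid step is not the sub-Gaussianity of $\sum_i\xi_i f(x_{t_k}^i)$ for a fixed $f$ (that would not survive the supremum over the infinite class $\Phi$), but rather concentration of the functional $F(\xi)=\frac{1}{n}\sup_{f\in\Phi}\bigl|\sum_i\xi_i f(y^i)\bigr|$ itself, which the paper obtains via a bounded-differences inequality in the Rademacher variables (showing $\sup_\xi\sum_j|D_jF(\xi)|^2\le \frac{8}{n^2}\sum_j(A_1^2\|y^j\|^2+A_2^2)$ and then applying a Boolean-hypercube concentration inequality); this is precisely what yields the $8\max\{A_1,A_2\}$ constants and allows the union bound over the $n'$ grid points.
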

 
 \begin{rem}\label{rem:log}
The logarithmatic term in $\phi(n)$ can be removed for many kinds of function classes $\Phi$, in particular for the function classes discussed in Section~\ref{sec:testfunction}. We defer the proof of this claim to \ref{app:functionclass}. 
 \end{rem}
\begin{proof}
The proof can be reduced to establishing \eqref{eq:EDprocess}. The estimate of $\EE \left[\sup_{0\le t \le T}\rmD_{\Phi}^2(\mu_t,\bar{\mu}_t^n)\right]$ and (b) can be indeed derived from \eqref{eq:EDprocess}, and by following the proof arguments of Theorem~\ref{thm: iid}. The class $\Phi$ in Assumption~\ref{function_class_assumption} is invariant up to an additive constant and so without loss of generality, $\EE \left[\sup_{0\le t \le T}\rmD_{\Phi}(\mu_t,\bar{\mu}_t^n)\right]$ can be reduced to
 \begin{equation}
     \frac{1}{n}\EE\sup_{t \in [0,T]} \sup_{f \in {\Phi}}\Bigl| \sum_{i=1}^n\xi_i f(X_t^i)\Bigr|,
 \end{equation}
where $\xi_1,\dots,\xi_n$ are i.i.d. random variables drawn from the Rademacher distribution and are independent of the processes $X^1,\dots,X^n$.

To this end,  given any $y^1,\dots,y^n \in \bR^d$, we define $F(\xi)$ for $\xi = (\xi_1,\dots,\xi_n) \in \{-1,1\}^{\otimes n}$ by
 \begin{equation}
     F(\xi) \coloneqq \frac{1}{n}\sup_{f \in {\Phi}}\Bigl|\sum_{i=1}^n\xi_i f(y^i)\Bigr|.
 \end{equation}
  By Assumption~\ref{function_class_assumption}~(c), we immediately have
 \begin{equation}\label{F_expectation_estimation}
     \EE F(\xi) \le \frac{A_3}{n}\sqrt{\sum_{i=1}^n(\|y^i\|^2 + 1)}.
 \end{equation}
 By definition, for any $\xi$ and $\epsilon > 0$, there exists a function $f^\xi \in {\Phi}$ such that
 \begin{equation}
     F(\xi) \le \frac{1}{n}\Bigl|\sum_{i=1}^n\xi_i f^\xi(y^i)\Bigr| + \epsilon.
 \end{equation}
 Note that, for any $1 \le j \le n$, one has
 \begin{align}
   D_jF(\xi) &\coloneqq   F(\xi) - \min_{z\in\{0,1\}}F(\xi_1,\dots,\xi_{j-1},z,\xi_{j+1},\dots,\xi_n)\notag\\
   &\le \epsilon + \frac{1}{n}\Bigl|\sum_{i=1}^n\xi_i f^\xi(y^i)\Bigr|- \min_{z\in\{0,1\}}F(\xi_1,\dots,\xi_{j-1},z,\xi_{j+1},\dots,\xi_n)\notag\\
   &\le \epsilon + \frac{1}{n}\Bigl|\sum_{i=1}^n\xi_i f^\xi(y^i)\Bigr|- \min_{z\in\{0,1\}}\frac{1}{n}\Bigl|\sum_{i \neq j}\xi_i f^\xi(y^i) +z f^\xi(y^j)\Bigr| \notag \\
   &\le \epsilon + \frac{2}{n}|f^\xi(y^j)|.
 \end{align}
 Taking the square and summing the above inequality over $j$ and next taking the supremum over all $\xi$ gives, 
 \begin{equation}
     \sup_{\xi} \sum_{j=1}^n |D_j F(\xi)|^2 \le \sup_{\xi} \sum_{j=1}^n\left(\epsilon + \frac{2}{n}|f^\xi(y^j)|\right)^2 \le \sup_{f \in {\Phi}} \sum_{j=1}^n \left(\epsilon + \frac{2}{n}|f(y^j)|\right)^2,
 \end{equation}
 holding true for arbitrary $\eps$, which implies 
 \begin{align}
     \sup_{\xi} \sum_{j=1}^n |D_j F(\xi)|^2 \le \frac{4}{n^2}\sup_{f \in {\Phi}}\sum_{j=1}^n|f(y^j)|^2 \le \frac{8}{n^2}\sum_{j=1}^n (A_1^2\|y^j\|^2 + A_2^2).
 \end{align}
Therefore, by the bounded difference inequality (\cite[Theorem 3.18]{van2014probability}), we obtain
 \begin{equation}
 \label{F_tail_prob}
    \mathbb{P}(F(\xi) - \EE F(\xi) \ge a) \le \exp\left(-\frac{n^2a^2}{64\sum_{j=1}^n(A_1^2\|y^j\|^2 + A_2^2)}\right).
 \end{equation}
Given $n' \in \mathbb{N}^+$, $x^1, \dots, x^{n}\in C([0,T];\bR^d)$ and $t_1,\dots,t_{n'} \in [0,T]$, we define the constant 
$$M\coloneqq \sqrt{\max_{1 \le p \le n'}\sum_{i=1}^n(\|x^i_{t_p}\|^2+1)},$$
based on the deterministic values $\{x^i_{t_p}\}_{p=1}^{n'}$, for $i = 1, \ldots, n$.
By \eqref{F_expectation_estimation}, we have
\begin{equation}
\label{M_bound}
    \EE \left[\frac{1}{n}\sup_{f \in {\Phi}}\Bigl|\sum_{i=1}^n\xi_i f(x_{t_p}^i)\Bigr|\right] \le \frac{A_3}{n}M, \;\forall \; p = 1, \ldots, n'.
\end{equation}

Using the Boole's inequality and \eqref{F_tail_prob}-\eqref{M_bound}, we have the following estimation of the tail probability, for any $a >0$,
\begin{align}
    &\mathbb{P}\left(\frac{1}{n}\max_{1\le p \le n'}\sup_{f \in {\Phi}}\Bigl|\sum_{i=1}^n\xi_i f(x_{t_p}^i)\Bigr| - \frac{A_3}{n}M \ge \frac{8\sqrt{\log n'}\max\{A_1,A_2\}}{n} M+ a\right)\\
    \le~& n' \max_{1 \le p \le n'}\mathbb{P}\left( \frac{1}{n}\sup_{f \in {\Phi}}\Bigl|\sum_{i=1}^n\xi_i f(x_{t_p}^i)  \Bigr| - \frac{A_3}{n}M \ge \frac{ 8\sqrt{\log n'}\max\{A_1,A_2\}}{n} M+ a\right)\notag\\
    \le~& n' \max_{1 \le p \le n'}\mathbb{P}\left( \frac{1}{n}\sup_{f \in {\Phi}}\Bigl|\sum_{i=1}^n\xi_i f(x_{t_p}^i)  \Bigr| - \EE \left[\frac{1}{n}\sup_{f \in {\Phi}}\Bigl|\sum_{i=1}^n\xi_i f(x_{t_p}^i)\Bigr|\right]\ge \frac{ 8\sqrt{\log n'}\max\{A_1,A_2\}}{n} M + a\right)\notag\\
    \le~& n'\exp\left(-\frac{n^2[\frac{ 8\sqrt{\log n'}\max\{A_1,A_2\}}{n} M + a]^2}{64\max\{A_1^2,A_2^2\} M^2}\right)\\
    \le~& n'\exp(-\log(n'))\exp\left(-\frac{n^2a^2}{64\max\{A_1^2,A_2^2\}M^2}\right) \\
    \le~&\exp\left(-\frac{n^2a^2}{64\max\{A_1^2,A_2^2\}M^2}\right).
\end{align}
Therefore, 
\begin{align}
    &\frac{1}{n}\EE \max_{1\le p \le n'}\sup_{f \in {\Phi}}\Bigl|\sum_{i=1}^n\xi_i f(x_{t_p}^i)\Bigr| \\
    \le~& \frac{A_3}{n}M + \frac{8\sqrt{\log n'}  \max\{A_1,A_2\}}{n}M + \int_{0}^{+\infty}\exp\left(-\frac{n^2a^2}{64\max\{A_1^2,A_2^2\}M^2}\right)\rmd a\\
    \le~&  \frac{A_3 + 8(\sqrt{\log n'} +1) \max\{A_1,A_2\}}{n}M \\
    =~& \frac{A_3 + 8(\sqrt{\log n'} +1) \max\{A_1,A_2\}}{n}\sqrt{\max_{1 \le p \le n'}\sum_{i=1}^n(\|x^i_{t_p}\|^2+1)}.
\end{align}
The above estimate is for deterministic functions in $C([0,T]; \RR^d)$ evaluated at time points $t_1, \ldots, t_{n'}$. Applying it to i.i.d. continuous stochastic processes $\{X^i_\cdot\}_{i=1}^n$ with the law $\mu$ and independent of $\xi_1, \ldots, \xi_n$, we obtain 
{\small
\begin{align}
    \frac{1}{n}\EE \max_{1 \le p \le n'}\sup_{f \in {\Phi}}\Bigl|\sum_{i=1}^n\xi_i f(X_{t_p}^i)\Bigr| &= \frac{1}{n}\EE\Bigl\{\EE \Bigl[\max_{1 \le p \le n'}\sup_{f \in {\Phi}}\Bigl|\sum_{i=1}^n\xi_i f(X_{t_p}^i)\Bigr|\,\big\vert\,X^1_\cdot,\dots,X^n_\cdot\Bigr]\Bigr\}\\
    &\le \frac{A_3+ 8(\sqrt{\log n'} + 1)\max\{A_1,A_2\}}{n} \EE \sqrt{\max_{1\le p \le n'}\sum_{i=1}^n(\|X_{t_p}^i\|^2 + 1)}\notag\\
    &\le \frac{A_3 + 8(\sqrt{\log n'} + 1)\max\{A_1,A_2\}}{\sqrt{n}}\left( \int_{C([0,T];\bR^d)}\sup_{0\le t \le T}\|x_t\|^2\rmd \mu(x) +1\right)^{\frac{1}{2}}.
\end{align}}
Using the above estimate and letting $t_0, t_1, \ldots, t_n$ define a partition of $[0,T]$, that is, $t_p= \frac{(p-1) T}{n'}$ for $p = 1,\dots,n'$, by Assumption~\ref{assump:module}, and with the notation $\Pi(t) = \lfloor\frac{n't}{T}\rfloor\frac{T}{n'}$, one can deduce
\begin{align}
    &\left|\frac{1}{n}\EE\sup_{t \in [0,T]}\sup_{f \in {\Phi}}\Bigl|\sum_{i=1}^n\xi_if(X_t^i)\Bigr| - \frac{1}{n}\EE\sup_{1\le p \le n'}\sup_{f \in {\Phi}} \Bigl|\sum_{i=1}^n\xi_i f(X_{t_p}^i)\Bigr|\right| \notag \\
    \le~&\frac{1}{n}\EE\sup_{t \in [0,T]}\sup_{ f \in {\Phi}}\Bigl|\sum_{i=1}^n \xi_i \left(f(X_t^i) - f(X_{\Pi(t)}^i)\right)\Bigr| \notag \\
     \le~& A_1\EE \sup_{t \in [0,T]}\|X_t^1 - X_{\Pi(t)}^1\| \le A_1Q(\frac{T}{n'})^\alpha \log^\beta(2n').
\end{align}
Choosing $n' = \lfloor n^{\frac{1}{2\alpha}}\rfloor + 1$ gives
\begin{align}
    &\frac{1}{n}\EE\sup_{t \in [0,T]}\sup_{f \in {\Phi}}\Bigl|\sum_{i=1}^n\xi_if(X_t^i)\Bigr|\\
    \le& \left[\frac{A_3 + 8\max\{A_1,A_2\}}{\sqrt{n}} + 8\max\{A_1,A_2\}\sqrt{\frac{\log(2^{2\alpha}n)}{2\alpha n}}\right]\left( \int_{C([0,T];\bR^d)}\sup_{0\le t \le T}\|x_t\|^2\rmd \mu(x) +1\right)^{\frac{1}{2}} \\
    &\quad +\frac{A_1Q T^\alpha}{\sqrt{n}}\left(\frac{\log(2^{4\alpha}n)}{2\alpha}\right)^{\beta}.
\end{align}
Therefore we have proved \eqref{eq:EDprocess}. 
\end{proof}

The next theorem shows that the law of the stochastic differential equation (SDE) satisfies the condition of Theorem \ref{thm: process}, {\it i.e.}, Assumption~\ref{assump:module}.

\begin{thm}\label{thm: prior_estimation}
Given a constant $T > 0$, a complete filtered probability space $(\Omega,\mathcal{F},\mathbb{F} = \{\mathcal{F}_t\}_{0\le t \le T},\mathbb{P})$ supporting an m-dimensional Brownian motion $W$ as well as an $\mathcal{F}_0$-measurable $\bR^d$-valued random variable $\eta$. We consider the following SDE
\begin{equation}
    \rmd X_t = B(t,X_t) \rmd t + \Sigma(t,X_t)\rmd W_t, \quad X_0 = \eta,
\end{equation}
where $B:[0,T]\times \bR^d \rightarrow \bR^d$ and $\Sigma: [0,T]\times \bR^d \rightarrow \bR^{d \times m}$ satisfy: $\forall t \in [0,T], \; x, x' \in \RR^d$,
\begin{align}
    \|B(t,x) - B(t,x')\|^2 + \|\Sigma(t,x) - \Sigma(t,x')\|_F^2 &\le K^2\|x - x'\|^2, \notag\\
    \|B(t,0)\| + \|\Sigma(t,0)\|_F &\le K, \qquad \qquad 
\end{align}
with $K$ being a positive constant and $\|\cdot\|_F$ denoting the Frobenius norm on $\bR^{d\times m}$. It is well-known that the above SDE admits a unique strong solution ({\it cf.} \cite[Theorem~3.3.1]{zhang2017backward}). We denote by $\mu_0 := \MCL(\eta)$, $\mu := \MCL(X)$ the laws of $\eta$ and $X$, respectively.

\begin{enumerate}
    \item[(a)] Assume that $\EE\|\eta\|^2 \le K^2 $, then there exists a positive constant $C$ depending only on $K$ and $T$, such that
    \begin{equation}
        \EE \sup_{0\le t \le T}\|X_t\|^2 \le C,
    \end{equation}
    and
     \begin{equation}
        \int_{C([0,T];\bR^d)}[\Delta(x,h)]^2\rmd \mu(x) \le C h\log\left(\frac{2T}{h}\right). 
    \end{equation}
    \item[(b)] Assume that 
    \begin{equation}
        \mathcal{W}_1^2(\mu_0,\tilde{\mu}) \le 2K^2\mathcal{H}(\tilde{\mu}|\mu_0), \; \forall \tilde{\mu} \ll \mu_0,
    \end{equation}
    and
    \begin{equation}
        \sup_{t\in[0,T],x\in\bR^d}\|\Sigma(t,x)\|_F \le K.
    \end{equation}
    
    Then, there exists a positive constant $C$ depending on $K$ and $T$, such that
    \begin{equation}
        \mathcal{W}_1^2(\mu,\tilde{\mu}) \le C\mathcal{H}(\tilde{\mu}|\mu), \;\forall \tilde{\mu} \ll \mu,
    \end{equation}
    where $\mathcal{H}$ is the relative entropy defined in Theorem \ref{thm: iid}~(d). 
\end{enumerate}
\end{thm}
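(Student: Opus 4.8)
For part (a), the plan is the classical pair of a priori estimates for SDEs with Lipschitz coefficients. First I would establish the moment bound $\EE\sup_{0\le t\le T}\|X_t\|^2\le C$: writing $X_t=\eta+\int_0^t B(s,X_s)\,\rmd s+\int_0^t\Sigma(s,X_s)\,\rmd W_s$, bound the drift term by Cauchy--Schwarz ($\|\int_0^t B\,\rmd s\|^2\le T\int_0^t\|B(s,X_s)\|^2\rmd s$) and the martingale term by the Burkholder--Davis--Gundy inequality, then use the linear growth $\|B(s,x)\|^2+\|\Sigma(s,x)\|_F^2\le C_K(1+\|x\|^2)$ (a consequence of the Lipschitz and growth-at-$0$ hypotheses) and close via Gr\"onwall's inequality applied to $g(t):=\EE\sup_{s\le t}\|X_s\|^2$, with the standard localization by the exit times $\tau_N=\inf\{t:\|X_t\|\ge N\}$ making this rigorous. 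Given this bound, $\sup_{0\le t\le T}\EE\|B(t,X_t)\|^2$ and $\sup_{0\le t\le T}\EE\|\Sigma(t,X_t)\|_F^2$ are finite, so $X$ is an It\^o process whose coefficients have bounded second moments, and the estimate $\int[\Delta(x,h)]^2\rmd\mu(x)\le Ch\log(2T/h)$ then follows directly from \cite[Theorem~1]{fischer2009moments} (cf.\ Remark~\ref{rem_process_continuity}, specialized to $\alpha=\beta=\tfrac12$). One could alternatively reprove this last estimate by a dyadic chaining / Garsia--Rodemich--Rumsey argument, but invoking \cite{fischer2009moments} is cleaner.

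For part (b), the natural route is a synchronous coupling combined with Girsanov's theorem. Given $\tilde\mu\ll\mu$ on $C([0,T];\RR^d)$ with $R:=\mathcal{H}(\tilde\mu|\mu)<+\infty$, I would first use the chain rule for relative entropy along the canonical filtration together with Girsanov's theorem to represent $\tilde\mu$ as the law of a solution $\tilde X$ of $\rmd\tilde X_t=[B(t,\tilde X_t)+\Sigma(t,\tilde X_t)u_t]\,\rmd t+\Sigma(t,\tilde X_t)\,\rmd W_t$ with the same initial-law/diffusion structure, where $u$ is adapted and $\mathcal{H}(\tilde\mu_0|\mu_0)+\tfrac12\EE\int_0^T\|u_t\|^2\rmd t=R$ (the drift difference automatically lying in the range of $\Sigma$ because $\tilde\mu\ll\mu$). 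I then run $X$ and $\tilde X$ on the \emph{same} Brownian motion, with $(X_0,\tilde X_0)$ an optimal $\mathcal{W}_1$-coupling of $(\mu_0,\tilde\mu_0)$, and set $\delta_t:=X_t-\tilde X_t$. Estimating $\EE\sup_{s\le t}\|\delta_s\|$: the initial term is $\mathcal{W}_1(\mu_0,\tilde\mu_0)\le K\sqrt{2\,\mathcal{H}(\tilde\mu_0|\mu_0)}$ by the assumed $\mathrm{T}_1$ inequality for $\mu_0$; the Lipschitz-in-$x$ parts of $B$ and $\Sigma$ produce terms that feed into a Gr\"onwall loop (the martingale increment handled by BDG); and the extra-drift term $\int_0^t\Sigma(s,\tilde X_s)u_s\,\rmd s$ is controlled by Cauchy--Schwarz via $\|\Sigma(s,\tilde X_s)\|_F\,\|u_s\|\le C_K(1+\|\tilde X_s\|)\|u_s\|$ against $\EE\int_0^T\|u_s\|^2\rmd s\le 2R$. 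Gr\"onwall then closes the estimate and yields $\mathcal{W}_1(\mu,\tilde\mu)^2\le[\EE\sup_t\|\delta_t\|]^2\le C(K,T)\,\mathcal{H}(\tilde\mu|\mu)$. An alternative is to invoke directly a transport-inequality propagation result for SDEs in the spirit of \cite{djellout2004transportation}, or the concentration/transport statements in \cite{delarue2020master} already used for Theorem~\ref{thm: iid}.

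The main obstacle in part (b) is controlling the extra-drift term, which requires integrability of $\|\tilde X_s\|$ under $\tilde\mu$; this is not automatic from $\mathcal{H}(\tilde\mu|\mu)<+\infty$ alone. I would obtain it from the Gibbs variational (entropy) inequality $\theta\,\EE^{\tilde\mu}[\sup_t\|X_t\|^2]\le R+\log\EE^{\mu}[e^{\theta\sup_t\|X_t\|^2}]$, which reduces matters to a sub-Gaussian tail bound $\EE^{\mu}[e^{\theta\sup_t\|X_t\|^2}]<+\infty$ for small $\theta$; this in turn uses that the $\mathrm{T}_1$ hypothesis on $\mu_0$ forces $\int e^{\theta_0\|x\|^2}\rmd\mu_0(x)<+\infty$ (the equivalence recalled in Theorem~\ref{thm: iid}(c), cf.\ \cite[Theorem~2.3]{djellout2004transportation}) together with a stochastic-Gr\"onwall argument to propagate it along the SDE. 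Reconciling these $L^2$-flavoured stochastic estimates with the first-moment ($\mathrm{T}_1$) form of the target inequality, so that the final constant depends only on $K$ and $T$, is the delicate bookkeeping step, and the coefficient structure (Lipschitz with linear growth) is precisely what is used to push it through.
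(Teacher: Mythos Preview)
Your approach to part~(b) is essentially what the paper does, in that the paper simply cites Theorems~5.1 and~5.5 of \cite{delarue2020master} (exactly the ``alternative'' you mention at the end). Your Girsanov/synchronous-coupling argument is plausible in spirit, but note that $\Sigma$ is not assumed invertible here, so the representation of $\tilde\mu$ as the law of a process driven by $\rmd W$ with an additive drift $\Sigma u$ is not automatic; since the cited result already handles the Lipschitz SDE case with constants depending only on $K$ and $T$, there is no need to fight through those technicalities.

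For part~(a), however, there is a genuine gap. The statement asks for a constant $C$ depending \emph{only on $K$ and $T$}, and the entire point of the paper (cf.\ the Remark immediately following the theorem) is that this $C$ must be free of the dimensions $d$ and $m$. Invoking \cite[Theorem~1]{fischer2009moments} gives the modulus-of-continuity estimate for general It\^o processes, but the constants there are not dimension-free in the required sense; so citing Fischer--Nappo does not prove the theorem as stated. The paper instead argues directly: after splitting off the drift (which is easy), it controls the martingale part $Y_t=\int_0^t\Sigma_u\rmd W_u$ by first conditioning on a deterministic starting point, building an exponential supermartingale for $\|Y_t-Y_s\|^2$, introducing a stopping time $\tau$ at level $\|Y_\cdot-Y_s\|=a$ to obtain a sub-Gaussian tail bound $\PP(\|Y_t-Y_s\|\ge a,\ S_X\le M)\le C\exp(-a^2/C(1+M^2)(t-s))$, then performing a dyadic chaining in $t$ on this tail bound (this is where the $\log(2T/h)$ factor appears), and finally summing over truncation levels $M$ using the sub-Gaussian tails of $S_X=\sup_t\|X_t\|$ implied by the $\mathrm T_1$ machinery. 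Each step is set up so that the constants track only $K$ and $T$. Your sketch ``dyadic chaining / Garsia--Rodemich--Rumsey'' is closer to what is actually needed, but you dismissed it in favor of the citation; you would also need to explain why the resulting constants are dimension-free, which is the substantive content here.
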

\begin{proof}
We defer the proof to \ref{app:prior_estimation} as it is less relevant to the main object of this paper. As mentioned in Remark \ref{rem_process_continuity}, the second part of claim (a) has been established in \cite{fischer2009moments} for more general It\^{o} processes. However, our estimates show that the constant $C$ does not depend on the dimensions $d$ and $m$.
\end{proof}

\section{Examples of Classes of Functions Satisfying Assumption~\ref{function_class_assumption}}\label{sec:testfunction}


\subsection{The Reproducing Kernel Hilbert Spaces (RKHSs)}
RKHS has developed into an essential tool in many areas, especially statistics and machine learning \cite{hofmann2008kernel}. We first recall its definition: a Hilbert space of functions $f: \RR^d \to \RR$, is said to be an RKHS if all evaluation functionals are bounded and linear. A more intuitive definition is through the so-called reproducing kernel. 

A symmetric function $k:\bR^d \times \bR^d \rightarrow \bR$ is called a positive kernel function on $\RR^d$, if
    \begin{equation}\label{positive kernel}
        \sum_{i=1}^n\sum_{j=1}^n a_ia_jk(x^i,x^j) \ge 0
    \end{equation}
holds for any $n \in \mathbb{N}, x^1,\dots,x^n \in \bR^d$, and $a_1,\dots,a_n \in \bR$. We then define the inner product space
\begin{equation}
    \{f(x) = \sum_{i=1}^m \alpha_i k(x,x_i): m \in \mathbb{N}^+, \{\alpha_i\}_{i=1}^m \subset \bR^m, \{x_i\}_{i=1}^m \subset \bR^{dm}\}
\end{equation}
with the inner product
\begin{equation}
    \langle f, g\rangle_{\mathcal{H}_k} = \sum_{i=1}^{m^f}\sum_{j=1}^{m^g}\alpha_i^f\alpha_j^fk(x_i^f,x_j^g), \; \forall f(x) = \sum_{i=1}^{m^f}\alpha_i^f k(x,x_i^f), \; g(x) = \sum_{i=1}^{m^g}\alpha_i^gk(x,x_i^g).
\end{equation}
The reproducing kernel Hilbert space $\mathcal{H}_k$ is the completion of the inner product space with respect to $\|\cdot\|_{\mathcal{H}_k} = \sqrt{\langle \cdot,\cdot\rangle_{\mathcal{H}_k}}$. 
Moreover, $\mathcal{H}_k$ satisfies the reproducing property:
\begin{equation}\label{def:reproducing property}
    f(x) = \langle f,k(x,\cdot)\rangle_{\mathcal{H}_k}, \forall x \in \bR^d, f \in \mathcal{H}_k.
\end{equation}
In particular, the function $k$, called the reproducing kernel of $\mathcal{H}_k$, satisfies 
\begin{equation}
    k(x,y) = \langle k(x,\cdot), k(y,\cdot)\rangle_{\mathcal{H}_k}.
\end{equation}
We refer the interested readers to \cite{aronszajn1950theory} for more properties of RKHSs. Theorem~\ref{thm: rkhs i.i.d.} below guarantees that RKHSs associated with Gaussian kernels, Laplacian kernels and neural tangent kernels satisfy Assumption \ref{function_class_assumption}. We remark that the choice in Theorem~\ref{thm: rkhs i.i.d.} reproduces the maximum mean discrepancy in \cite{borgwardt2006integrating}.

\begin{thm}\label{thm: rkhs i.i.d.}
Assume the reproducing kernel $k(\cdot, \cdot)$ satisfies:
\begin{enumerate}
    \item[(a)] There exist constants $K_1, K_2>0$, such that $\forall x,y \in \bR^d$,  $k(x,x) + k(y,y) - 2k(x,y) \le K_1^2 \|x-y\|^2$ and $K_2  = \sqrt{k(0,0)}$;
    \item[(b)] 
    If $\mu$ is a signed measure on $\bR^d$,
    \begin{equation}
        \int_{\bR^d}k(x,y)\rmd\mu(y) = 0, \;\forall x \in \bR^d \Rightarrow \mu \equiv 0. 
    \end{equation}
\end{enumerate}
Then for any $\mu \in \mathcal{P}^1(\bR^d)$, ${\Phi} = \{f \in \mathcal{H}_k, \|f\|_{\mathcal{H}_k} \le 1\}$ satisfies Assumption \ref{function_class_assumption} with $A_1 = K_1$, $A_2 = K_2$  and $A_3 = \sqrt{2} \max\{K_1,K_2\}$.
\end{thm}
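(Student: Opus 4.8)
The plan is to verify the three items of Assumption~\ref{function_class_assumption} in turn for $\Phi=\{f\in\mathcal{H}_k:\|f\|_{\mathcal{H}_k}\le 1\}$, using only the reproducing property $f(x)=\langle f,k(x,\cdot)\rangle_{\mathcal{H}_k}$ and the two hypotheses (a)--(b).

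\textbf{Item (a).} Suppose $\int_{\bR^d}f\rmd\mu=0$ for all $f\in\Phi$; by homogeneity this holds for all $f\in\mathcal{H}_k$. In particular, taking $f=k(x,\cdot)$ for each fixed $x$ (which lies in $\mathcal{H}_k$), we get $\int_{\bR^d}k(x,y)\rmd\mu(y)=0$ for every $x\in\bR^d$. Hypothesis (b) then forces $\mu\equiv 0$. One should check the integrability needed to apply this, i.e. that $k(x,\cdot)$ is $\mu$-integrable, which follows from the growth bound established in item (b) below together with $\mu\in\mathcal{P}^1(\bR^d)$.

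\textbf{Item (b).} For $f\in\Phi$ and any $x,y$, the reproducing property and Cauchy--Schwarz give
\begin{equation}
|f(x)-f(y)|=|\langle f,k(x,\cdot)-k(y,\cdot)\rangle_{\mathcal{H}_k}|\le \|f\|_{\mathcal{H}_k}\,\|k(x,\cdot)-k(y,\cdot)\|_{\mathcal{H}_k}\le \|k(x,\cdot)-k(y,\cdot)\|_{\mathcal{H}_k}.
\end{equation}
Expanding the squared norm via $k(x,y)=\langle k(x,\cdot),k(y,\cdot)\rangle_{\mathcal{H}_k}$ yields $\|k(x,\cdot)-k(y,\cdot)\|_{\mathcal{H}_k}^2=k(x,x)+k(y,y)-2k(x,y)\le K_1^2\|x-y\|^2$ by hypothesis (a), so $\mathrm{Lip}(f)\le K_1=A_1$. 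Similarly $|f(0)|\le\|f\|_{\mathcal{H}_k}\sqrt{k(0,0)}\le K_2=A_2$. Combining these also gives the growth bound $|f(x)|\le K_2+K_1\|x\|$ needed in Definition~\ref{dfn:GMMD} and above.

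\textbf{Item (c).} This is the main computational step. For $\mathcal{X}=\{x^1,\dots,x^n\}$, using the reproducing property,
\begin{equation}
\EE\sup_{f\in\Phi}\Bigl|\sum_{i=1}^n\xi_i f(x^i)\Bigr|=\EE\sup_{\|f\|_{\mathcal{H}_k}\le1}\Bigl|\Bigl\langle f,\sum_{i=1}^n\xi_i k(x^i,\cdot)\Bigr\rangle_{\mathcal{H}_k}\Bigr|=\EE\Bigl\|\sum_{i=1}^n\xi_i k(x^i,\cdot)\Bigr\|_{\mathcal{H}_k}.
\end{equation}
By Jensen's inequality this is at most $\bigl(\EE\|\sum_i\xi_i k(x^i,\cdot)\|_{\mathcal{H}_k}^2\bigr)^{1/2}$, and since the $\xi_i$ are independent mean-zero with $\xi_i^2=1$, the cross terms vanish, leaving $\sum_{i=1}^n k(x^i,x^i)$. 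Thus the Rademacher complexity is bounded by $\frac1n\sqrt{\sum_i k(x^i,x^i)}$, so it remains to control $k(x^i,x^i)$ pointwise. Writing $k(x,x)=k(x,x)-2k(x,0)+k(0,0)+2k(x,0)-k(0,0)$ and applying hypothesis (a) to the first three terms gives $k(x,x)\le K_1^2\|x\|^2+2k(x,0)-k(0,0)$; bounding $k(x,0)\le\sqrt{k(x,x)}\sqrt{k(0,0)}$ via Cauchy--Schwarz and solving the resulting quadratic inequality in $\sqrt{k(x,x)}$ yields $\sqrt{k(x,x)}\le K_1\|x\|+K_2$, hence $k(x,x)\le 2K_1^2\|x\|^2+2K_2^2\le 2\max\{K_1,K_2\}^2(\|x\|^2+1)$. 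Therefore $\mathrm{Rad}_n(\Phi,\mathcal{X})\le\frac{\sqrt2\max\{K_1,K_2\}}{n}\sqrt{\sum_i(\|x^i\|^2+1)}$, giving $A_3=\sqrt2\max\{K_1,K_2\}$. The only delicate point is getting the clean constant in the pointwise bound on $\sqrt{k(x,x)}$; everything else is a direct application of the reproducing property and elementary inequalities.
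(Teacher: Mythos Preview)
Your proof is correct and follows essentially the same route as the paper: the reproducing property plus Cauchy--Schwarz for the Lipschitz bound, the positive-definiteness/Cauchy--Schwarz inequality $|k(x,0)|\le\sqrt{k(x,x)k(0,0)}$ combined with hypothesis (a) at $y=0$ to get $\sqrt{k(x,x)}\le K_1\|x\|+K_2$, and the standard RKHS Rademacher computation via Jensen and orthogonality of the $\xi_i$. The only cosmetic difference is that the paper writes the key pointwise bound directly as $(\sqrt{k(x,x)}-\sqrt{k(0,0)})^2\le k(x,x)+k(0,0)-2k(x,0)\le K_1^2\|x\|^2$, whereas you arrive at the same quadratic inequality after a brief rearrangement.
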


\begin{proof}
By definition, $k(x,\cdot) \in \mathcal{H}_k$ for all $x \in \bR^d$. Hence, Assumption~\ref{function_class_assumption}~(a) is implied by item (b) above.
For Assumption~\ref{function_class_assumption} (b), $\forall f \in \mathcal{H}_k$ such that  $\|f\|_{\mathcal{H}_k} \le 1$, we compute
\begin{align}
    |f(x) - f(y)| &=|\langle f,k(x,\cdot) - k(y,\cdot)\rangle_{\mathcal{H}_k}| 
    \le \sqrt{\langle k(x,\cdot) - k(y,\cdot),k(x,\cdot) - k(y,\cdot)\rangle_{\mathcal{H}_k}} \notag \\&= \sqrt{k(x,x) + k(y,y) - 2k(x,y)} \le K_1\|x - y\|,
\end{align}
for any $x,y \in \bR^d$, which implies that the Lipschitz constant is $K_1$. 

For Assumption~\ref{function_class_assumption} (c), we first derive an estimate for $k(x,x)$. To this end, let $n = 2$, $x^1 = x$, $x^2 = 0$ in inequality \eqref{positive kernel}, we have
\begin{equation}
    a_1^2k(x,x) + a_2^2k(0,0) + 2a_1a_2k(x,0) \ge 0,
\end{equation}
for any $a_1,a_2$, implying $|k(x,0)| \le \sqrt{k(x,x)k(0,0)}$.
Therefore,
\begin{equation}
    (\sqrt{k(x,x)}-\sqrt{k(0,0)})^2 = k(x,x) + k(0,0) - 2\sqrt{k(x,x)k(0,0)} \le k(x,x) + k(0,0) - 2k(x,0) \le K_1^2\|x\|^2,
\end{equation}
and $k(x,x) \le (\sqrt{k(0,0)} + K_1 \|x\|)^2 = (K_2 + K_1\|x\|)^2 \le 2(K_2^2 + K_1^2\|x\|^2)$.
Now we estimate the Rademacher complexity of ${\Phi} = \{f \in \mathcal{H}_k, \|f\|_{\mathcal{H}_k} \le 1\}$:
\begin{align}
    \mathrm{Rad}_n({\Phi},\mathcal{X}) &= \frac{1}{n}\EE  \sup_{\|f\|_{\mathcal{H}_k} \le 1} \Bigl|\sum_{i=1}^n\xi_i f(x^i)\Bigr| =\frac{1}{n}\EE  \sup_{\|f\|_{\mathcal{H}_k} \le 1} \Bigl|\sum_{i=1}^n\xi_i \langle f,k(x^i,\cdot)\rangle_{\mathcal{H}_k}\Bigr| \\
    &=\frac{1}{n}\EE  \sup_{\|f\|_{\mathcal{H}_k} \le 1} | \langle f,\sum_{i=1}^n\xi_i k(x^i,\cdot)\rangle_{\mathcal{H}_k}| \le \frac{1}{n}\EE  \sqrt{\sum_{i=1}^n\sum_{j=1}^n\xi_i \xi_jk(x^i,x^j)} \notag \\
    &\le \frac{1}{n} \sqrt{\EE \sum_{i=1}^n\sum_{j=1}^n\xi_i \xi_jk(x^i,x^j)}= \frac{1}{n} \sqrt{\sum_{i=1}^n k(x^i,x^i)} \le \frac{1}{n}\sqrt{2\sum_{i=1}^n (K_1^2\|x^i\|^2 + K_2^2)} \\
    &\le \frac{A_3}{n}\sqrt{(\sum_{i=1}^n\|x^i\|^2 + 1)}.
\end{align}
\end{proof}

\subsection{The Barron Space}\label{sec:barron}

Barron space was firstly introduced in \cite{weinan2019barron,ma2019priori}, which is designed to analyze the approximation and generalization properties of two-layer neural networks. It can be considered as the continuum analog of two-layer neural networks. See \cite[Section 2.1]{weinan2019barron} for a detailed discussion on Barron space.
\begin{defn}
We say $f: \bR^d \rightarrow \bR$ is a Barron function, if $f$ admits the following representation:

\begin{equation}\label{def_Barron_Space}
    f(x) = \int_{\mathbb{S}^{d}} \sigma(\omega\cdot x + b) \rmd\rho(\omega,b),
\end{equation}
where $\sigma(x) = \max\{x,0\}$ is the ReLU function, and $\rho$ is a finite signed measure on $\mathbb{S}^{d} = \{(\omega,b)\in \RR^{d+1}, \|\omega\|^2 + |b|^2 = 1\}$ with $\|\cdot\|$ being the Euclidean norm. We will use the Barron space $\mathcal{B}$ to denote the collection of all Barron functions and define a norm $\|\cdot\|_\mathcal{B}$ on the Barron space as follows:
\begin{equation}
    \|f\|_\mathcal{B} = \inf_{\rho} \|\rho\|_{TV},
\end{equation}
where the infimum is taken over all $\rho$ for which \eqref{def_Barron_Space} holds for all $x \in \bR^d$, and $\|\cdot\|_{TV}$ is the total variation of $\rho$.
\end{defn}

The following theorem reveals some useful properties of Barron space and shows that the unit ball of Barron space, denoted by $\mathcal{B}_1 := \{f \in \mathcal{B}, \|f\|_\mathcal{B} \leq 1\}$, can serve as a good choice of the test function class ${\Phi}$. Although Theorem~\ref{thm: Property_Barron_Space} is stated when the activation function $\sigma$ is the ReLU function, all claims except (d) hold if $\sigma$ is any 1-Lipschitz, nonlinear function and hence $\mathcal{B}_1$ corresponding to such $\sigma$ is also a good choice of the test function class $\Phi$. The proof of claim (e) for general activation functions can be found in \cite{pinkus1999approximation} while other claims (a), (b), (c) and (f) follow the same proof presented below.
\begin{thm}\label{thm: Property_Barron_Space}

\begin{enumerate}
    \item[(a)] The Barron space is a Banach space.
    \item[(b)] For any $f \in \mathcal{B}$, f is Lipschitz continuous with the Euclidean norm in $\bR^d$ and $\mathrm{Lip}(f) \le \|f\|_\mathcal{B}$. 
    \item[(c)] Denote by $\mathcal{P}(\mathbb{S}^d)$ all probability measures on $\mathbb{S}^d$. For any $\pi \in \mathcal{P}(\mathbb{S}^d)$, let $k_\pi(x, x'): \RR^d \times \RR^d \to \RR$ be
    \begin{equation}
        k_\pi(x,x') = \int_{\mathbb{S}^d}\sigma(\omega \cdot x + b)\sigma(\omega\cdot x'+b)\rmd \pi(\omega,b),
    \end{equation}
    and $\mathcal{H}_\pi$ be the RKHS associated with $k_\pi$. Then,
    \begin{equation}
        \mathcal{B} = \mathop{\cup}_{\pi \in \mathcal{P}(\mathbb{S}^d)} \mathcal{H}_\pi,
    \end{equation}
    and 
    \begin{equation}\label{def:BarronNorm}
        \|f\|_\mathcal{B} = \inf_{\pi \in \mathcal{P}(\mathbb{S}^d)}\|f\|_{\mathcal{H}_\pi},
    \end{equation}
    with $\|\cdot\|_{\mathcal{H}_\pi}$ being the norm in $\mathcal{H}_\pi$.
    \item[(d)] Let $f$ be a measurable function in $L^2(\bR^d)$ and 
    \begin{equation}
        \gamma(f) = \int_{\bR^d}(1+\|\omega\|^2)|\hat{f}(\omega)|\rmd \omega < +\infty,
    \end{equation}
    where $\hat{f}(\omega) = \int_{\bR^d}f(x)e^{-i\omega\cdot x} \rmd x$ is the Fourier transform of f. Then $f \in \mathcal{B}$ and 
    \begin{equation}
        \|f\|_\mathcal{B} \le 4\gamma(f).
    \end{equation}
    \item[(e)] For any compact set $K \subset \bR^d$, the restriction of $\mathcal{B}$ on $K$ is dense in $C(K)$.
    \item[(f)] Let $\mathcal{X} = \{x^1,\dots,x^n\}$, where $x^i \in \bR^d$ and ${\Phi} = \mathcal{B}_1$. Then the empirical Rademacher complexity satisfies
    \begin{equation}
        \mathrm{Rad}_n({\Phi},\mathcal{X}) \le \frac{2}{n}\sqrt{\sum_{i=1}^n(\|x^i\|^2+1)},
    \end{equation}
    and ${\Phi} = \mathcal{B}_1$ satisfies Assumption~\ref{function_class_assumption} with $A_1 = A_2 = 1$ and $A_3 =2$.
\end{enumerate}

\end{thm}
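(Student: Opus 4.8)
The plan is to prove each of the claims (a)--(f) of Theorem~\ref{thm: Property_Barron_Space} in turn, since they are largely independent, and then assemble the concluding statement about Assumption~\ref{function_class_assumption}. The genuinely substantive items are (b), (c), (d), and (f); the others are either standard functional analysis or immediate corollaries.

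\medskip
\textbf{Claims (a), (b).} For (a), I would show $\|\cdot\|_{\mathcal{B}}$ is a norm (homogeneity and the triangle inequality follow from concatenating representing measures and rescaling; definiteness follows because $\|f\|_{\mathcal{B}}=0$ forces a representing measure of arbitrarily small total variation, hence $f\equiv 0$ via the bound $|f(x)|\le\|\rho\|_{TV}(\|x\|+1)$), and then verify completeness: given a $\|\cdot\|_{\mathcal{B}}$-Cauchy sequence, pick representing measures $\rho_k$ for the successive differences with $\|\rho_k\|_{TV}\le 2^{-k}+\varepsilon_k$, sum them against the weak-$*$ compact unit ball of measures on the compact sphere $\mathbb{S}^d$, and identify the limit. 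For (b), the key point is that the ReLU activation is $1$-Lipschitz and $|\omega|\le 1$ on $\mathbb{S}^d$, so for any representing measure $\rho$ one has $|f(x)-f(y)|\le\int_{\mathbb{S}^d}|\sigma(\omega\cdot x+b)-\sigma(\omega\cdot y+b)|\,\rmd|\rho|\le\|\rho\|_{TV}\|x-y\|$; taking the infimum over $\rho$ gives $\mathrm{Lip}(f)\le\|f\|_{\mathcal{B}}$.

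\medskip
\textbf{Claims (c), (d), (e).} Claim (c) is the characterization of Barron space as a union of RKHSs: given $f$ with representing measure $\rho$, disintegrate $\rho=g\cdot\pi$ with $\pi$ a probability measure and $g\in L^1(\pi)$; one shows $f\in\mathcal{H}_\pi$ with $\|f\|_{\mathcal{H}_\pi}^2\le\|g\|_{L^2(\pi)}^2$, and by choosing $\pi$ proportional to $|\rho|$ one gets $\|f\|_{\mathcal{H}_\pi}\le\|\rho\|_{TV}$; conversely any $h\in\mathcal{H}_\pi$ is represented by a density in $L^2(\pi)\subset L^1(\pi)$, giving the reverse inclusion and the infimum formula. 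I expect this to be the main obstacle, since it requires care with the measure-theoretic identification of $\mathcal{H}_\pi$ (the standard Mercer-type construction) and the passage between $L^1$ and $L^2$ densities; the cleanest route is to cite the corresponding statement in \cite{weinan2019barron}. Claim (d) is the classical Barron-type bound: write $f$ via the inverse Fourier transform, use the integral representation of $e^{i\omega\cdot x}$ (after subtracting the value and gradient at $0$) in terms of ReLU ridge functions as in Barron's 1993 argument, and track constants to land at $4\gamma(f)$. Claim (e) then follows from (d) together with a mollification/truncation argument: smooth compactly supported functions have finite $\gamma$, lie in $\mathcal{B}$, and are dense in $C(K)$ by Stone--Weierstrass.

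\medskip
\textbf{Claim (f) and conclusion.} For the Rademacher complexity bound, I would use claim (c) to fix, for $f\in\mathcal{B}_1$, a probability measure $\pi$ with $\|f\|_{\mathcal{H}_\pi}\le 1+\delta$, but more directly expand $\sum_i\xi_i f(x^i)=\int_{\mathbb{S}^d}\big(\sum_i\xi_i\sigma(\omega\cdot x^i+b)\big)\rmd\rho(\omega,b)$ and bound $\big|\sum_i\xi_i f(x^i)\big|\le\|\rho\|_{TV}\sup_{(\omega,b)}\big|\sum_i\xi_i\sigma(\omega\cdot x^i+b)\big|\le\sup_{(\omega,b)}\big|\sum_i\xi_i\sigma(\omega\cdot x^i+b)\big|$. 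Taking expectations, I would bound $\EE\sup_{(\omega,b)}|\sum_i\xi_i\sigma(\omega\cdot x^i+b)|$ by peeling the $1$-Lipschitz ReLU via the Ledoux--Talagrand contraction inequality (gaining a factor $2$) and then bounding the Rademacher complexity of the linear class $\{(\omega,b)\mapsto\omega\cdot x^i+b : (\omega,b)\in\mathbb{S}^d\}$ by $\frac1n\EE\|\sum_i\xi_i(x^i,1)\|\le\frac1n\sqrt{\sum_i(\|x^i\|^2+1)}$ via Cauchy--Schwarz and Jensen, giving the factor $2/n$. Finally, the concluding assertion is immediate: (e) gives Assumption~\ref{function_class_assumption}(a); (b) with $\|f\|_{\mathcal{B}}\le1$ gives $\mathrm{Lip}(f)\le 1=A_1$ and, since $|f(0)|\le\mathrm{Lip}(f)\cdot 0 +|f(0)|$ — more carefully, $f(0)=\int\sigma(b)\rmd\rho$ so $|f(0)|\le\|\rho\|_{TV}\le 1=A_2$; and (f) gives $A_3=2$.
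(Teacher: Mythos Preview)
Your proposal is correct and follows essentially the same route as the paper: the quotient-space argument for (a) (the paper uses $\mathcal{B}\cong\mathcal{M}/\mathcal{N}$ rather than a direct completeness check, but this is cosmetic), the same Lipschitz bound for (b), a citation to \cite{weinan2019barron} for (c), the Fourier/ReLU integral identity for (d), density of $C_0^\infty$ for (e), and for (f) the reduction to $\sup_{(\omega,b)}|\sum_i\xi_i\sigma(\omega\cdot x^i+b)|$ followed by contraction and the linear-class bound. The only mechanical difference is that the paper obtains the factor $2$ in (f) by splitting the absolute value into positive and negative parts and invoking the one-sided contraction lemma (Lemma~26.9 in \cite{shalev2014understanding}), whereas you invoke the two-sided Ledoux--Talagrand contraction directly; both yield the same constant.
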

\begin{rem}
In \cite[Section~9]{barron1993universal}, examples of functions with bounded $\gamma(f)$ are provided ({\it e.g.}, Gaussian, positive definite functions, linear functions, radial functions and functions in fractional Sobolev spaces $H^s(\bR^d)$ with $s > \frac{d}{2} + 1$). By claim (d), they all belong to the Barron space. 
\end{rem}

\begin{proof}
\begin{enumerate}
    \item[(a)]  Let $\mathcal{M}$ be the set of all signed measures on $\mathbb{S}^d$. With $\|\rho\|_\mathcal{M} := \|\rho\|_{TV}$,  $\mathcal{M}$ is a Banach space. Let $\mathcal{N} = \{\rho \in \mathcal{M}: \int_{\mathbb{S}^d} \sigma(\omega\cdot x + b) \rmd \rho(\omega,b) = 0, \; \forall x \in \bR^d\}$, then $\mathcal{N}$ is a closed subspace of $\mathcal{M}$. It is evident that $\mathcal{B} \cong \mathcal{M}/ \mathcal{N}$, thus we obtain the desired result \cite[Theorem 1.41]{rudin1991functional}.
    
    \item[(b)] For any $(\omega,b) \in \mathbb{S}^d$, $\mathrm{Lip}(\sigma(\omega\cdot x + b) )\le 1$. Then our result follows from the subadditivity of $\mathrm{Lip}$.

    \item[(c)] See \cite[Theorem 3]{weinan2019barron}.
    
    \item[(d)] Our proof is similar to that in \cite[Theorem 6]{klusowski2016risk}. By the property of Fourier transform, one has
    \begin{equation}\label{Fourier_relation}
        f(x) = x\cdot \nabla f(0) + f(0) + \int_{\bR^d}(e^{i \omega\cdot x} - i\omega\cdot x - 1)\hat{f}(\omega)\rmd \omega.
    \end{equation}
    For $|z| \le c$, the following identity holds
    \begin{equation}
        -\int_{0}^{c}[\sigma(z-u)e^{iu} + \sigma(-z-u)e^{-iu}] \rmd u = e^{iz} - iz - 1.
    \end{equation}
    The above two equations can be found in \cite[Theorem~6]{klusowski2016risk}.
    Now let $c = \|\omega\|$, $z = \omega \cdot x$, $\alpha(\omega) = \omega/\|\omega\|$ and $u = \|\omega\|t$, we have:
    \begin{equation}
        -\|\omega\|^2 \int_{0}^{1}\sqrt{1+t^2}\left[\sigma\left(\frac{\alpha(\omega)\cdot x - t}{\sqrt{1+t^2}}\right) e^{i\|\omega\|t} +\sigma\left(\frac{-\alpha(\omega)\cdot x-t}{\sqrt{1+t^2}}\right)e^{-i\|\omega\|t}\right]\rmd t = e^{i\omega\cdot x } - i\omega\cdot x -1.
    \end{equation}
    In other words, 
    \begin{equation}
        e^{i\omega\cdot x } - i\omega\cdot x -1 = \int_{\mathbb{S}^d}\sigma(\omega'\cdot x + b)\rmd \rho_{\omega} (\omega',b),
    \end{equation}
    where
    \begin{equation}
        \rho_\omega = -\|\omega\|^2 \int_{0}^1\sqrt{1+t^2} [e^{i\|\omega\|t}\delta_{(\frac{\alpha(\omega)}{\sqrt{1+t^2}},-\frac{t}{\sqrt{1+t^2}})} + e^{-i\|\omega\|t}\delta_{(-\frac{\alpha(\omega)}{\sqrt{1+t^2}},-\frac{t}{\sqrt{1+t^2}})}]\rmd t.
    \end{equation}
    Hence, $e^{i\omega\cdot x} - i\omega\cdot x-1 $ is in the Barron space $\mathcal{B}$ with $\|\rho_{\omega}\|_{TV} \le 2\sqrt{2}\|\omega\|^2$. Recall that equation \eqref{Fourier_relation} gives
    \begin{align}
        f(x) &=\|\nabla f(0)\|\left[ \sigma\left(\frac{\nabla f(0) \cdot x}{\|\nabla f(0)\|}\right) + \sigma\left(-\frac{\nabla f(0) \cdot x}{\|\nabla f(0)\|}\right)\right] +f(0)\sigma(0\cdot x + 1)\notag \\
        & \qquad +  \int_{\bR^d} \int_{\mathbb{S}^d}\sigma(\omega'\cdot x + b)\rmd \rho_{\omega}(\omega',b) \hat{f}(\omega)\rmd \omega,
    \end{align}
    and one concludes
    \begin{align}
        \|f\|_\mathcal{B} \le 2\|\nabla f(0)\| + |f(0)| + 2\sqrt{2}\int_{\bR^d}\|\omega\|^2|\hat{f}(\omega)|\rmd \omega \\
        \leq  \int_{\bR^d}[1 + 2\|\omega\| + 2\sqrt{2}\|\omega\|^2]|\hat{f}(\omega)|\rmd \omega \le 4\gamma(f).
    \end{align}
    \item[(e)] For any $f \in C_0^\infty(\bR^d)$, we know that $f$ is a Schwartz function. Therefore, $\hat{f}$ is also a Schwartz function and we have $\gamma(f) < +\infty$. Hence, $f \in \mathcal{B}$; see e.g. \cite[Section~6]{stein2011fourier}. Because the restriction of $C_0^\infty(\bR^d)$ on $K$ is dense in $C(K)$, we easily conclude.
    \item[(f)] Our proof here is similar to the one in \cite[Theorem 6]{weinan2019barron}.
    {\small \begin{align}
        \mathrm{Rad}_{n}({\Phi},\mathcal{X}) &= \frac{1}{n}\EE \sup_{f \in {\Phi}}\sum_{i=1}^n\xi_i f(x^i) \notag \\
        &= \frac{1}{n}\EE \sup_{\|\rho\|_{TV} \le 1}\Bigl|\int_{\mathbb{S}^d}\sum_{i=1}^n\xi_i\sigma(\omega \cdot x^i +b)\rmd \rho(\omega,b)\Bigr|\notag \\
        &= \frac{1}{n}\EE \sup_{(\omega,b) \in \mathbb{S}^d}\Bigl|\sum_{i=1}^n\xi_i\sigma(\omega \cdot x^i + b)\Bigr|\notag \\
        &\le \frac{1}{n}\left[\EE \max \Big\{\sup_{(\omega,b)\in \mathbb{S}^d}\sum_{i=1}^n\xi_i\sigma(\omega \cdot x^i + b), 0\Big\} + \EE \max \Big\{\sup_{(\omega,b)\in \mathbb{S}^d}-\sum_{i=1}^n\xi_i\sigma(\omega \cdot x^i + b), 0\Big\}\right] \notag \\
        &= \frac{2}{n}\EE \sup_{(\omega,b)\in \mathbb{S}^d}\sum_{i=1}^n\xi_i\sigma(\omega \cdot x^i + b),
    \end{align}}
where the last equality holds due to  $\sup_{(\omega,b)\in \mathbb{S}^d}\sum_{i=1}^n\xi_i\sigma(\omega \cdot x^i + b) \ge 0$ and the symmetry of $\xi_1,\dots,\xi_n$. Then Lemma 26.9 in \cite{shalev2014understanding} gives
\begin{align}
    \mathrm{Rad}_n({\Phi},\mathcal{X})&\le \frac{2}{n} \EE \sup_{(\omega,b)\in \mathbb{S}^d}\sum_{i=1}^n\xi_i\sigma(\omega\cdot x^i+b) \leq \frac{2}{n} \EE \Big\| \sum_{i=1}^n\xi_i((x^i)\transpose,1)\transpose\Big\|\notag \\
    &\le \frac{2}{n} \sqrt{\EE\Big\|\sum_{i=1}^n\xi_i((x^i)\transpose,1)\transpose\Big\|^2}\le \frac{2}{n}\sqrt{\sum_{i=1}^n(\|x^i\|^2 + 1)}. 
\end{align}
Finally, Assumption~\ref{function_class_assumption} (a) is fulfilled by claim (e) above, and  claims (b) and the above estimate together imply that ${\Phi}$ satisfies Assumption~\ref{function_class_assumption} (b)--(c) with $A_1 = A_2 = 1$ and $A_3 =2$.
\end{enumerate}

\end{proof}

\begin{rem}
As the Barron space serves as the continuum analog of two-layer neural networks, the generalized Barron space or the Banach space associated with multi-layer networks introduced in \cite{wojtowytsch2020banach} serves as the continuum analog of multi-layer neural networks. By the fact that the unit ball of the Barron space $\mathcal{B}_1$ is a Polish space, one can define a signed Radon measure $\rho$ on the Borel $\sigma$-algebra of $\mathcal{B}_1$, and then define 
\begin{align}
    f_\rho(x) &= \int_{\mathcal{B}_1}\sigma(g(x))\rmd \rho(g), \notag \\
    \|f\|_{\mathcal{B}^2} &= \inf \{\|\rho\|_{TV}: f = f_\rho \text{ on }\bR^d\},\notag \\
    \mathcal{B}^2 &= \{ f \in C(\bR^d),  \|f\|_{\mathcal{B}^2} < +\infty\},
\end{align}
where the integral is in the sense of Bochner intergrals. $\mathcal{B}^2$ can be interpreted as the Banach space associated with three-layer neural networks. We can then repeat this process and define $\mathcal{B}^L$ for any $L \ge 2$, which is associated with $(L+1)$-layer neural networks. Naturally we denote by $\mathcal{B}^1$ the Barron space  $\mathcal{B}$ defined via \eqref{def_Barron_Space}.  
It can be proven that the unit ball of the generalized Barron space is also a suitable test function class. For more technical issues and intuition about $\mathcal{B}^L$, we refer to \cite[Section 2.2]{wojtowytsch2020banach}.
\end{rem}

\subsection{Flow-induced Function Spaces\footnote{It is a class of function mimicking the limiting behavior of residual neural networks. And the name ``flow-induced function class'' would be more appropriate as the terminology ``space'' implies linearity, which we currently are not able to prove. Nevertheless, we keep the title as it is, following the terminology introduced in \cite{weinan2019barron}.}}

Flow-induced function spaces introduced in \cite{weinan2019barron} can serve as a continuum analog as the residual neural networks (ResNet,~\cite{he2016deep}). Our definition here is slightly different from the original definition in \cite{weinan2019barron}. Such an alteration will enable us to bound the  Rademacher complexity by 
\begin{equation}
    \frac{1}{n}\sqrt{\sum_{i=1}^n(\|x^i\|^2 + 1)}
\end{equation}
as requested by Assumption \ref{function_class_assumption}, while the choice in \cite{weinan2019barron} yields a bound 
\begin{equation}
    \sqrt{\frac{1+ \max_{1 \le i \le n}\|x^i\|_\infty}{n}}.
\end{equation}
Given an integer $D \ge d+1$, let $\rho = \{\rho_\tau\}_{0\le \tau\le 1}$ be a class of vector signed measures from $\mathbb{S}^{D-1} = \{\omega \in \bR^D,\|\omega\| = 1\}$ to $\bR^D$ such that the following ordinary differential equation (ODE)
\begin{equation}\label{ODE:cfs}
    \frac{\rmd Z_\tau}{\rmd \tau} = \int_{\mathbb{S}^{D-1}}\sigma(\omega \cdot Z_\tau)\rmd \rho_\tau(\omega), \quad \forall\; 0 \le \tau \le 1, \quad Z_0 = z_0
\end{equation}
with $\sigma$ being the ReLU function, is well-posed for any initial condition $z_0$, {\it i.e.}, the solution exists, is unique, and is continuous with respect to the initial condition $z_0$. Let us denote by $\Psi$ the collection of all admissible $\rho$ satisfying this well-posedness condition.
If the ODE \eqref{ODE:cfs} is discretized by the Euler scheme in time and $\rho_\tau$ is discrete, then the system gives the same structure as ResNet, a widely used deep neural network architecture.

For any $x \in \bR^d$, let $Z_{\rho}(\tau,x)$ be the solution of \eqref{ODE:cfs} with the initial condition $z_0 = (x\transpose,1,\bm{0}_{(D-d-1)})\transpose$, where $\bm{0}_{(D-d-1)}$ denotes a vector of zeros of length $D-d-1$.
And for any $\rho = \{\rho_\tau\}_{0\le \tau \le 1} \in \Psi$, we define
\begin{equation}
 \Lambda(\rho,\tau) = \sqrt{\sum_{i=1}^D\|\rho_\tau^i\|_{TV}^2},
\end{equation}
where $\rho_\tau^i$ is the $i^{th}$ component of $\rho_\tau$. To simplify the discussion, we in addition require that $\Lambda(\rho,\tau)$ is continuous with respect to $\tau$ for any $\rho \in \Psi$.
Then, we can define the flow-induced function spaces as the space $\mathcal{D}$ given by the class of functions $f$ admitting the representation $f = f_{\rho, \alpha}$ where
\begin{align}
    f_{\rho,\alpha}(x) &:= \alpha\transpose Z_\rho(1,x), \quad \forall \rho \in \Psi, \;  \alpha \in \bR^D, \notag \\
    \|f\|_{\mathcal{D}} &:= \inf \left\{\|\alpha\|\exp\left(\int_{0}^1\Lambda(\rho,\tau)\rmd \tau\right), \; f = f_{\rho,\alpha} \text{ on } \bR^d\right\}, \notag \\
    \mathcal{D}&:= \{ f \in C(\bR^d), \|f\|_\mathcal{D} < +\infty\}.
\end{align}
The following theorem gives some useful properties of the flow-induced function spaces and indicates that the unit balls of the flow-induced function spaces are also appropriate test function classes ${\Phi}$.
\begin{thm}\label{thm: compositional}
Fix an integer $D \ge d+1$.
\begin{enumerate}
    \item[(a)] For any $f \in \mathcal{D}$, $\mathrm{Lip}(f) \le \|f\|_\mathcal{D}$ and $|f(0)| \le \|f\|_\mathcal{D}$.
    \item[(b)] If $D \ge d+2$, then for any $f \in \mathcal{B}$, $f \in \mathcal{D}$ and $\|f\|_\mathcal{D} \le e\|f\|_\mathcal{B}$. Hence, by Theorem  \ref{thm: Property_Barron_Space} (e), the restriction of $\mathcal{D}$ on any compact set $K \subset \bR^d$ is dense in $C(K)$.
    \item[(c)] Let $\mathcal{X} = \{x^1,\dots,x^n\}$, where $x^i \in \bR^d$ and ${\Phi} =\{f \in \mathcal{D}, \|f\|_\mathcal{D} \le 1\}$. Then, the empirical Rademacher complexity satisfies
    \begin{equation}
        \mathrm{Rad}_n({\Phi},\mathcal{X}) \le \frac{e^2}{n}\sqrt{\sum_{i=1}^n(\|x^i\|^2 + 1)},
    \end{equation}
    and ${\Phi}$ satisfies the Assumption~\ref{function_class_assumption} with $A_1 = A_2 = 1$ and $A_3 = e^2$.
\end{enumerate}
\end{thm}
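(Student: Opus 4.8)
For claim (a), I would start from the basic stability estimate for the ODE \eqref{ODE:cfs}. Since $\sigma$ is $1$-Lipschitz with $\sigma(0)=0$ and every $\omega$ appearing has $\|\omega\|=1$, for two solutions $Z_\tau,Z'_\tau$ of \eqref{ODE:cfs} the $m$-th component of $\frac{\rmd}{\rmd\tau}(Z_\tau-Z'_\tau)$ is bounded in absolute value by $\|\rho_\tau^m\|_{TV}\,\|Z_\tau-Z'_\tau\|$, so by Cauchy--Schwarz $\frac{\rmd}{\rmd\tau}\|Z_\tau-Z'_\tau\|^2\le 2\Lambda(\rho,\tau)\|Z_\tau-Z'_\tau\|^2$, whence $\|Z_\tau-Z'_\tau\|\le\|Z_0-Z'_0\|\exp(\int_0^\tau\Lambda(\rho,s)\rmd s)$ by Grönwall; the same argument with $Z'\equiv 0$ gives $\|Z_\tau\|\le\|Z_0\|\exp(\int_0^\tau\Lambda(\rho,s)\rmd s)$. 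Applying the first bound to the two initial conditions $(x^\transpose,1,\bm 0)^\transpose$ and $(y^\transpose,1,\bm 0)^\transpose$, whose difference has norm $\|x-y\|$, and the second at $x=0$ (initial norm $1$), yields $|f_{\rho,\alpha}(x)-f_{\rho,\alpha}(y)|\le\|\alpha\|\exp(\int_0^1\Lambda(\rho,\tau)\rmd\tau)\|x-y\|$ and $|f_{\rho,\alpha}(0)|\le\|\alpha\|\exp(\int_0^1\Lambda(\rho,\tau)\rmd\tau)$; taking the infimum over representations proves $\mathrm{Lip}(f)\le\|f\|_{\mathcal D}$ and $|f(0)|\le\|f\|_{\mathcal D}$. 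For claim (b), the idea is to realise a Barron function by a single residual layer stored in a spare coordinate. Given $f\in\mathcal B$, $f\ne 0$, take a representing measure $\rho$ on $\mathbb S^{d}$ with $c:=\|\rho\|_{TV}$ arbitrarily close to $\|f\|_{\mathcal B}$; since $D\ge d+2$ the coordinate $d+2$ starts at $0$. Let $\rho_\tau$ (for all $\tau$) be zero in every component except component $d+2$, which equals the pushforward of $c^{-1}\rho$ under the isometric embedding $(\omega,b)\mapsto(\omega,b,\bm 0)\in\mathbb S^{D-1}$. Then coordinates $1,\dots,d+1$ stay frozen at $(x,1)$, so $\frac{\rmd}{\rmd\tau}Z_\rho(\tau,x)^{d+2}=c^{-1}\int_{\mathbb S^{d}}\sigma(\omega\cdot x+b)\rmd\rho(\omega,b)=c^{-1}f(x)$ is constant, the other coordinates stay $0$, and $\Lambda(\rho,\tau)\equiv\|c^{-1}\rho\|_{TV}=1$; the right-hand side is globally Lipschitz, so $\rho\in\Psi$. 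Taking $\alpha=c\,e_{d+2}$ gives $f_{\rho,\alpha}=f$ and $\|\alpha\|\exp(\int_0^1\Lambda(\rho,\tau)\rmd\tau)=c\,e$, hence $\|f\|_{\mathcal D}\le e\|f\|_{\mathcal B}$ on letting $c\to\|f\|_{\mathcal B}$ (the case $f=0$ being trivial with $\alpha=0$); density on compacta then follows from Theorem~\ref{thm: Property_Barron_Space}~(e).

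For claim (c) I would first reduce to a purely geometric quantity: for any representation $f=f_{\rho,\alpha}$ with $\|\alpha\|\exp(\int_0^1\Lambda(\rho,\tau)\rmd\tau)\le 1$, Cauchy--Schwarz gives $|\sum_i\xi_i f(x^i)|\le\exp(-\int_0^1\Lambda(\rho,\tau)\rmd\tau)\,\|\sum_i\xi_i Z_\rho(1,x^i)\|$, and (since $\Phi$ is symmetric, allowing us to drop the absolute value in $\mathrm{Rad}_n$) it suffices to bound $\frac1n\EE_\xi\sup_{\rho\in\Psi}\exp(-\int_0^1\Lambda(\rho,\tau)\rmd\tau)\|\sum_{i}\xi_i Z_\rho(1,x^i)\|$ by $\frac{e^2}{n}\sqrt{\sum_i(\|x^i\|^2+1)}$. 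The plan is to work with the renormalised process $\widetilde Z_\rho(\tau,x):=Z_\rho(\tau,x)\exp(-\int_0^\tau\Lambda(\rho,s)\rmd s)$; using the positive $1$-homogeneity of $\sigma$ (so the drift $\int\sigma(\omega\cdot Z_\rho)\rmd\rho_\tau$ rescales to $\int\sigma(\omega\cdot\widetilde Z_\rho)\rmd\rho_\tau$) one obtains, pointwise in $\rho$ and $\xi$, the differential inequality $\frac{\rmd}{\rmd\tau}\|\sum_i\xi_i\widetilde Z_\rho(\tau,x^i)\|\le\Lambda(\rho,\tau)\big[\sup_{\|\omega\|\le1}|\sum_i\xi_i\sigma(\omega\cdot\widetilde Z_\rho(\tau,x^i))|-\|\sum_i\xi_i\widetilde Z_\rho(\tau,x^i)\|\big]$. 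Its integrating-factor solution writes $\|\sum_i\xi_i\widetilde Z_\rho(1,x^i)\|$ as the $\tau=0$ value $\|\sum_i\xi_i(x^i,1)\|$ (independent of $\rho$) plus the integral of the ReLU source term against a sub-probability measure on $[0,1]$. The source term I would then control with the contraction lemma ({\it cf.} \cite[Lemma~26.9]{shalev2014understanding}): its index set may contain $\rho$, $\tau$ and $\omega$ at once, so $\EE_\xi\sup_{\rho,\tau,\omega}|\sum_i\xi_i\sigma(\omega\cdot\widetilde Z_\rho(\tau,x^i))|$ is bounded by a constant multiple of $\EE_\xi\sup_{\rho,\tau}\|\sum_i\xi_i\widetilde Z_\rho(\tau,x^i)\|$; combined with $\EE_\xi\|\sum_i\xi_i(x^i,1)\|\le\sqrt{\sum_i(\|x^i\|^2+1)}$ (the second-moment bound $\EE_\xi\|\sum_i\xi_i v_i\|\le(\sum_i\|v_i\|^2)^{1/2}$) and the fact that $\|\widetilde Z_\rho(\tau,x^i)\|\le(\|x^i\|^2+1)^{1/2}$ uniformly in $\rho,\tau$, this closes into an estimate for $\frac1n\EE_\xi\sup_{\rho,\tau}\|\sum_i\xi_i\widetilde Z_\rho(\tau,x^i)\|$, with the two compounding exponential factors (one from the flow of the trajectory, one from the contraction pass) producing the constant $e^2$. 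Finally the three stated consequences follow: (a) and (c) give (b)--(c) of Assumption~\ref{function_class_assumption} with $A_1=A_2=1$, $A_3=e^2$, and (b) gives density, hence Assumption~\ref{function_class_assumption}~(a), when $D\ge d+2$.

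The main obstacle is the estimate in claim (c). The naive route --- bound $\|\sum_i\xi_i\widetilde Z_\rho(1,x^i)\|$ by the triangle inequality and use $\|\widetilde Z_\rho(1,x^i)\|\le(\|x^i\|^2+1)^{1/2}$ --- loses a factor $\sqrt n$, so the cancellation coming from the Rademacher signs must be exploited; but this requires passing to an expectation before invoking contraction, whereas the supremum runs over the infinite-dimensional parameter $\rho\in\Psi$, which is \emph{not} confined to any ball of bounded $\int_0^1\Lambda(\rho,\tau)\rmd\tau$. Reconciling these two features is exactly what forces the renormalisation $\widetilde Z$ (which makes every admissible $\rho$ produce a uniformly bounded process) and a self-referential Grönwall-type argument, whose closure hinges on keeping the $\tau$-weight sub-probabilistic and on tracking the contraction constant carefully; getting the clean constant $e^2$ out of this interplay is the delicate step, while everything else (claims (a), (b), and the bookkeeping that reduces (c) to the displayed geometric inequality) is routine.
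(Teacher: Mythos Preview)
Your arguments for claims (a) and (b) are essentially the same as the paper's and are fine. The gap is in claim (c).

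Your integrating-factor step is correct: with $V_\tau^\rho:=\sum_i\xi_i\widetilde Z_\rho(\tau,x^i)$ one indeed gets
\[
\|V_1^\rho\|\le e^{-\int_0^1\Lambda}\,\|V_0\|+\int_0^1\Lambda(\rho,\tau)e^{-\int_\tau^1\Lambda}\,S_\tau^\rho\,\rmd\tau,\qquad S_\tau^\rho=\sup_{\|\omega\|\le1}\Bigl|\sum_i\xi_i\sigma(\omega\cdot\widetilde Z_\rho(\tau,x^i))\Bigr|,
\]
and the weight is sub-probability. But taking $\sup_\rho$ and then $\EE_\xi$, the contraction lemma only gives $\EE_\xi\sup_{\rho,\tau,\omega}|\sum_i\xi_i\sigma(\omega\cdot\widetilde Z_\rho(\tau,x^i))|\le 2\,\EE_\xi\sup_{\rho,\tau}\|V_\tau^\rho\|$, and by time-rescaling of the flow $\sup_{\rho,\tau}\|V_\tau^\rho\|=\sup_\rho\|V_1^\rho\|$. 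So your ``closure'' is $A\le B+2A$ with $A=\EE_\xi\sup_\rho\|V_1^\rho\|$ and $B=\EE_\xi\|V_0\|$, which is vacuous. The uniform bound $\|\widetilde Z_\rho(\tau,x^i)\|\le(\|x^i\|^2+1)^{1/2}$ does not rescue this: it only yields the trivial $O(n)$ estimate. The renormalisation $\widetilde Z$ controls the \emph{trajectory} but not the \emph{speed} $\Lambda(\rho,\tau)$, and it is precisely the unbounded $\Lambda$ that prevents a pointwise-in-$\rho$ differential inequality from lifting to one for $\EE_\xi\sup_\rho$.

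The paper avoids this by working directly with the function $R_\tau:=\frac1n\EE_\xi\sup_{(\alpha,\rho)\in\mathfrak P^\epsilon}\sum_i\xi_i\,\alpha\transpose Z_\rho(\tau,x^i)$ and proving the genuine differential inequality $\overline{\lim}_{h\to0}(R_{\tau+h}-R_\tau)/h\le 2(1+\epsilon)^3R_\tau$, from which Gr\"onwall gives $R_1\le e^{2(1+\epsilon)^3}R_0$; letting $\epsilon\to0$ yields the single exponential $e^2$. The crucial device you are missing is the \emph{time-reparametrisation} of $\rho$: one first shows (equation~\eqref{control of derivative}) that any representation can be reparametrised so that, in addition to $\|\alpha\|e^{\int_0^1\Lambda}\le1+\epsilon$, one also has $\|\alpha\|\sup_\tau\Lambda(\rho,\tau)e^{\int_0^\tau\Lambda}\le1+\epsilon$. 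This second constraint bounds $\Lambda$ itself, so the $\tau$-derivative of the sup is controllable; a further reparametrisation is then used to re-embed the resulting expression back into $\mathfrak P^\epsilon$, closing the loop with constant $2(1+\epsilon)^3$ rather than the fatal factor~$2$ alone.
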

\begin{proof}
For claim (a), given $\rho \in \Psi$, let $Z_\tau$, $Z_\tau'$ be the solutions to the ODE \eqref{ODE:cfs} with the initial conditions $z_0, z_0' \in \bR^D$, respectively. Then,
\begin{align}
    \frac{\rmd}{\rmd \tau}\|Z_\tau - Z_\tau'\|^2 &= 2\int_{\mathbb{S}^{D-1}}[\sigma(\omega\cdot Z_\tau) - \sigma(\omega \cdot Z_\tau')]\rmd [Z_\tau - Z_\tau']\transpose \rho_\tau(\omega) \notag \\
    &\le 2 \int_{\mathbb{S}^{D-1}} |\omega\cdot(Z_\tau - Z_\tau')| \rmd |[Z_\tau - Z_\tau']\transpose \rho_\tau(\omega)|\notag \\
    &\le 2 \Lambda(\rho,\tau)\|Z_\tau - Z_\tau'\|^2,
\end{align}
where $|\cdot|$ is obtained by taking element-wise absolute values of a vector. Integrating both sides from $\tau = 0$ to 1 gives
\begin{equation}
    \|Z_1 - Z_1'\| \le \|z_0 - z_0'\| \exp\left(\int_{0}^1\Lambda(\rho,\tau)\rmd \tau\right).
\end{equation}
Taking $z_0 = (x\transpose,1,\bm{0}_{(D-d-1)})$ and $z_0' = ((x')\transpose,1, \bm{0}_{(D-d-1)})$, we deduce
\begin{equation}
    |\alpha\transpose Z_\rho(1,x) - \alpha\transpose Z_\rho(1,x')| \le \|\alpha\| \exp\left(\int_{0}^1\Lambda(\rho,\tau)\rmd \tau\right)\|x- x'\|,
\end{equation}
and by taking $z_0 = (\bm{0}_d,1,\bm{0}_{(D-d-1)})\transpose$ and $z_0' = \bm{0}_D\transpose$, we have
\begin{equation}
    |\alpha\transpose Z_\rho(1,0)| \le \|\alpha\|\exp\left(\int_{0}^1\Lambda(\rho,\tau)\rmd \tau\right).
\end{equation}
Now by the definition of $f$ in  $\mathcal{D}$ and $\|f\|_\mathcal{D}$, we easily conclude.

For claim (b), without loss of generality, assume $D = d+2$. For any $f \in \mathcal{B}$ and $\epsilon > 0$, let
\begin{equation}
    f(x) = \int_{\mathbb{S}^d}\sigma(\omega \cdot x + b)\rmd \tilde{\rho}(\omega,b)
\end{equation}
with $0 < \|\tilde{\rho}\|_{TV}\le \|f\|_\mathcal{B} + \epsilon $. 
Define $\tilde{\rho}$:
\begin{equation}
    \hat{\rho}_\tau^i = 0, \quad 1 \le i \le d+1, \text{ and } \hat{\rho}_\tau^{d+2} = \frac{\tilde{\rho}}{\|\tilde{\rho}\|_{TV}}, \text{ for } 0 \leq \tau \leq 1.
\end{equation}
Then it is easy to check that $Z_{\hat{\rho}}(\tau,x)= (x\transpose,1,\frac{\tau f(x)}{\|\tilde{\rho}\|_{TV}})\transpose.$
and hence $f(x) = \|\tilde{\rho}\|_{TV}\, \bm{1}_{d+2}\transpose \, Z_{\hat{\rho}}(1,x)$ where $\bm{1}_{d+2} = (\bm{0}_{d+1},1)\transpose$.
Combining with the fact that
\begin{equation}
    \Lambda(\hat{\rho},\tau) \equiv 1,  \quad 0 \leq \tau \leq 1,
\end{equation}
we know that
\begin{equation}
    \|f\|_\mathcal{D} \le e \|\tilde{\rho}\|_{TV} \le e[\|f\|_\mathcal{B} + \epsilon],
\end{equation}
for any $\epsilon > 0$, which concludes our proof.

For claim (c), we first prove that for any $f \in {\Phi} = \{ f \in \mc{D}, \|f\|_\mc{D} \leq 1\}$ and $\epsilon > 0$, there exist $\overline{\rho} \in \Psi$ and $\alpha \in \bR^D$ such that $f(x) = \alpha\transpose Z_{\overline{\rho}}(1,x)$ with
\begin{equation}\label{control of derivative}
    \|\alpha\|\exp\left(\int_{0}^1\Lambda(\overline{\rho},\tau)\rmd \tau\right) \le 1 + \epsilon, \quad \|\alpha\|\sup_{0\le \tau \le 1}\Lambda(\overline{\rho},\tau)\exp\left(\int_{0}^\tau\Lambda(\overline{\rho},\tau')\rmd \tau'\right) \le 1+\epsilon.
\end{equation}
First, we choose $\alpha \in \bR^D$ and $\rho \in \Psi$ satisfying that
\begin{equation}
    f(x) = f_{\rho,\alpha}(x) = \alpha\transpose Z_{\rho}(1,x), \text{ and } \|\alpha\|\exp\left(\int_{0}^1\Lambda(\rho,\tau)\rmd \tau\right) \le 1 + \frac{\epsilon}{2} .
\end{equation}
Given a strictly increasing and continuously differentiable function  $F(\tau):[0,1]\rightarrow[0,1]$ satisfying that $F(0) = 0$, $F(1)  = 1$, we define
\begin{equation}
    \overline{\rho}_\tau := F'(\tau)\rho_{F(\tau)}.
\end{equation}
Then, from equation~\eqref{ODE:cfs} one has
\begin{equation}
    \frac{\rmd Z_{\rho}(F(\tau),x)}{\rmd \tau} = \int_{\mathbb{S}^{D-1}} \sigma(\omega\cdot Z_{\rho}(F(\tau),x))\rmd \overline{\rho}_\tau(\omega).
\end{equation}
In addition, $Z_{\overline{\rho}}(\tau,x) = Z_\rho(F(\tau),x)$ and $f_{\overline{\rho},\alpha} = f_{\rho,\alpha} = f$ on $\RR^d$. Note that the wellposedness of the above ODE can be deduced from the fact that $F$ is an isomorphism on $[0,1]$. Noticing that
\begin{equation}
    \int_{0}^1\Lambda(\overline{\rho},\tau)\rmd \tau = \int_{0}^1 F'(\tau)\Lambda(\rho,F(\tau))\rmd \tau = \int_{0}^1 \Lambda(\rho,\tau)\rmd \tau,
\end{equation}
we have
\begin{align}
    \|\alpha\|\exp\left(\int_{0}^1\Lambda(\overline{\rho},\tau)\rmd \tau\right) &= \|\alpha\|\exp\left(\int_{0}^1\Lambda(\rho,\tau)\rmd \tau\right) \le 1 + \half\epsilon, \notag \\
    \|\alpha\|\sup_{0\le \tau \le 1}\Lambda(\overline{\rho},\tau)\exp\left(\int_{0}^\tau\Lambda(\overline{\rho},\tau')\rmd \tau'\right) &=  \|\alpha\|\sup_{0\le \tau \le 1}F'(\tau)\Lambda(\rho,F(\tau))\exp\left(\int_{0}^{F(\tau)}\Lambda(\rho,\tau')\rmd \tau'\right). 
\end{align}
Next, assume without loss of generality that $\exp(\int_{0}^\tau\Lambda(\rho,\tau')\rmd \tau')$ strictly increases in $\tau$. Then, there exists a continuous and strictly increasing function $F^\ast:[0,1]\rightarrow [0,1]$ with $F^\ast(0) = 0$, $F^\ast(1) = 1$ and
\begin{equation}
    \exp\left(\int_{0}^{F^\ast(\tau)}\Lambda(\rho,\tau')\rmd \tau'\right) = 1- \tau + \tau \exp\left(\int_{0}^1\Lambda(\rho,\tau')\rmd \tau'\right),\quad \forall \tau\in[0,1],
\end{equation}
which gives (by implicit function theorem), when $\Lambda(\rho,F^\ast(\tau)) \neq 0$, 
\begin{equation}
    (F^\ast)'(\tau)\Lambda(\rho,F^\ast(\tau))\exp\left(\int_{0}^{F^\ast(\tau)}\Lambda(\rho, \tau')\rmd \tau'\right) = \exp\left(\int_{0}^1\Lambda(\rho,\tau')\rmd \tau'\right)-1.
\end{equation}
Then equation \eqref{control of derivative} is obtained by approximating $F^{*}$ through a continuously differentiable isomorphism on $[0,1]$.

Return to the proof of claim (c), for any $\epsilon > 0$, we define
\begin{multline}
    \mathfrak{P}^\epsilon = \bigg\{(\alpha,\rho) \in \bR^D \times \Psi: \\ \|\alpha\|\exp\left(\int_{0}^1\Lambda(\rho, \tau)\rmd \tau\right) \le 1+\epsilon, \, \|\alpha\|\sup_{0\le \tau \le 1}\Lambda(\rho,\tau)\exp\left(\int_{0}^1\Lambda(\rho,\tau')\rmd \tau'\right)\le 1 +\epsilon\bigg\},
\end{multline}
and 
\begin{equation}
    R_\tau^\epsilon = \frac{1}{n}\EE\sup_{(\alpha,\rho) \in \mathfrak{P}^\epsilon}\sum_{i=1}^n\xi_i \alpha\transpose Z_\rho(\tau,x^i).
\end{equation}
With \eqref{control of derivative}, we have $\mathrm{Rad}_n({\Phi},\mathcal{X})\le R_1^\eps$ for any $\eps$. So it suffices to deduce an upper bound for $R_1^\eps$.

A straightforward calculation gives
\begin{align}
|R_\tau^\epsilon - R_{\tau'}^\epsilon| &\le \sup_{(\alpha,\rho) \in \mathfrak{P}^\epsilon}\max_{1\le i \le n}|\alpha \transpose[Z_\rho(\tau,x^i) - Z_\rho(\tau',x^i)]\\
&\le \sup_{(\alpha,\rho) \in \mathfrak{P}^\epsilon}\max_{1\le i \le n}\int_{\tau}^{\tau'}\int_{\mathbb{S}^{D-1}}|\sigma(\omega\cdot Z_\rho(u,x^i))|\rmd |\alpha\transpose\rho_u|(\omega) \rmd u \\
&\le |\tau - \tau'| \sup_{(\alpha,\rho) \in \mathfrak{P}^\epsilon}\max_{1\le i \le n}\sup_{0 \le \tau \le 1}\|Z_\rho(\tau,x^i)\|\|\alpha\|\sup_{0 \le \tau \le 1}\Lambda(\rho,\tau) \\
&\le |\tau - \tau'|\sup_{(\alpha,\rho) \in \mathfrak{P}^\epsilon}\|\alpha\|\sup_{0 \le \tau \le 1}\Lambda(\rho,\tau)\exp\left(\int_{0}^1\Lambda(\rho,\tau')\rmd\tau'\right)\max_{1\le i\le n}(1+\|x_i\|)\\
&\le (1+\epsilon)\max_{1\le i\le n}(1+\|x_i\|)|\tau-\tau'|,
\end{align}
where we use
\begin{equation}
    \sup_{0 \le \tau \le 1}\|Z_\rho(\tau,x^i)\| \le \exp\left(\int_{0}^1\Lambda(\rho,\tau')\rmd\tau'\right)(1+ \|x_i\|),
\end{equation}
which can be proved using the same argument in the proof of claim (a).
Hence $R_\tau^\eps$ is continuous.

Next we fix $\tau \in (0,1)$ and compute
\begin{align}
    \overline{\lim\limits_{h \rightarrow 0}}\frac{R_{\tau+h}^\eps -R_\tau^\eps}{h} &\le \frac{1}{n}\EE   \overline{\lim\limits_{h \rightarrow 0}} \frac{1}{h}\left[\sup_{(\alpha,\rho) \in \mathfrak{P}^\epsilon}\sum_{i=1}^n\xi_i \alpha\transpose Z_\rho(\tau+h,x^i) - \sup_{(\alpha,\rho) \in \mathfrak{P}^\epsilon}\sum_{i=1}^n\xi_i \alpha\transpose Z_\rho(\tau,x^i)\right]\notag \\
    &\le \frac{1}{n}\EE   \overline{\lim\limits_{h \rightarrow 0}} \frac{1}{h}\sup_{(\alpha,\rho) \in \mathfrak{P}^\epsilon}\left[\sum_{i=1}^n\xi_i \alpha\transpose Z_\rho(\tau+h,x^i) - \sum_{i=1}^n\xi_i \alpha\transpose Z_\rho(\tau,x^i)\right]\notag \\
    &\le \frac{1}{n}\EE \sup_{(\alpha,\rho) \in \mathfrak{P}^\epsilon}\sum_{i=1}^n \xi_i \alpha\transpose\frac{\rmd}{\rmd \tau}Z_{\rho}(\tau,x^i) \notag \\
    &= \frac{1}{n}\EE \sup_{(\alpha,\rho) \in \mathfrak{P}^\epsilon}\sum_{i=1}^n\xi_i \int_{\mathbb{S}^{D-1}}\sigma(\omega \cdot Z_\rho(\tau,x^i))\rmd \alpha\transpose \rho_\tau(\omega) \notag \\
    &\le \frac{1}{n} \sup_{(\alpha,\rho)\in \mathfrak{P}^\epsilon}\|\alpha\|\Lambda(\rho,\tau)\exp\left(\int_{0}^{\tau}\Lambda(\rho,\tau')\rmd \tau'\right) \\
    & \qquad \times \EE\left[ \sup_{\|\omega\| \le 1,\rho \in \Psi }\Bigl|\sum_{i=1}^n\xi_i\sigma\Bigl(\frac{\omega \cdot Z_{\rho}(\tau,x^i)
    }{\exp(\int_{0}^\tau\Lambda(\rho, \tau')\rmd \tau')}\Bigr)\Bigr|\right] \notag \\
    &\le \frac{2(1+\epsilon)}{n}\EE \sup_{\|\omega\| \le 1,\rho \in \Psi }\sum_{i=1}^n\xi_i\Bigl(\frac{\omega\transpose Z_{\rho}(\tau,x^i)
    }{\exp(\int_{0}^\tau\Lambda(\rho,\tau')\rmd \tau')}\Bigr).
\end{align}
For any $\rho \in \Psi$, 
using a similar argument to the proof of equation \eqref{control of derivative}, we can find another $\overline{\rho} \in \Psi$ such that  $Z_{\overline{\rho}}(\tau,x) = Z_\rho(\tau,x)$ and 
\begin{align}
    \exp\left(\int_{0}^\tau\Lambda(\overline{\rho},\tau')\rmd \tau'\right) &\le (1+\epsilon) \exp\left(\int_{0}^\tau\Lambda(\rho, \tau')\rmd \tau'\right), \notag \\
    \sup_{0\le u \le \tau}\Lambda(\overline{\rho},u)\exp\left(\int_{0}^{u}\Lambda(\overline{\rho},\tau')\rmd \tau'\right) &\le (1+\epsilon)\exp\left(\int_{0}^\tau\Lambda(\rho, \tau')\rmd \tau'\right).
\end{align}
We  can then find another $\hat{\rho} \in \Psi$ such that  $\hat{\rho}_{\tau'} = \overline{\rho}_{\tau'}$ for all $0 \le \tau' \le \tau$ and
\begin{align}
      \exp\left(\int_{0}^1\Lambda(\hat{\rho},\tau')\rmd \tau'\right) &\le (1+\epsilon)^2 \exp\left(\int_{0}^\tau\Lambda(\rho,\tau')\rmd \tau'\right),\notag \\
    \sup_{0\le u \le 1}\Lambda(\hat{\rho},u)\exp\left(\int_{0}^{u}\Lambda(\hat{\rho},\tau')\rmd \tau'\right) &\le (1+\epsilon)^2\exp\left(\int_{0}^\tau\Lambda(\rho,\tau')\rmd \tau'\right).
\end{align}
Hence, $Z_{\hat{\rho}}(\tau,x) = Z_\rho(\tau,x)$, and for any $\omega$ with $\|\omega\| \leq 1$,
\begin{equation}
    \left(\frac{\omega}{(1+\epsilon)^2\exp(\int_{0}^\tau\Lambda(\rho,\tau')\rmd \tau')}, \hat{\rho}\right) \in \mathfrak{P}^\epsilon.
\end{equation}
Therefore,
\begin{equation}
     \overline{\lim\limits_{h \rightarrow 0}}\frac{R_{\tau+h}^\eps -R_\tau^\eps}{h} \le \frac{2(1+\epsilon)^3}{n} \EE\left[\sup_{(\alpha,\rho) \in \mathfrak{P}^\epsilon}\sum_{i=1}^n\xi_i \alpha\transpose Z_\rho(\tau,x^i)\right] = 2(1+\epsilon)^3R_\tau^\eps,
\end{equation}
which means
\begin{equation}
     \overline{\lim\limits_{h \rightarrow 0}}\frac{\exp\{-2(1+\epsilon)^3(\tau+h)\}R_{\tau+h}^\eps - \exp\{-2(1+\epsilon)^3\tau\}R_\tau^\eps}{h} \le 0. 
\end{equation}
For any $\epsilon' > 0$, let $P_\tau = \exp\{-2(1+\epsilon)^3 \tau\} R_\tau^\eps - \epsilon' \tau$, then $P_\tau$ is continuous in $\tau$ and satisfies
\begin{equation}
      \overline{\lim\limits_{h \rightarrow 0}}\frac{P_{\tau+h} -P_\tau}{h} \le -\epsilon' < 0, 
\end{equation}
which means that $P_\tau$ is decreasing. Therefore,
\begin{equation}
    \exp\{-2(1+\epsilon)^3\} R_1^\eps -\epsilon'\le R_0^\eps, 
\end{equation}
or $R_1^\eps \le e^{2(1+\eps)^3} R_0^\eps$ by letting $\eps' \to 0$. We can conclude our proof by computing
\begin{align}
    R_0^\eps &= \frac{1}{n}\EE \sup_{\|\alpha\| \le 1+\epsilon}\sum_{i=1}^n\xi_i((x^i)\transpose,1,0_{D-d-1})\alpha = \frac{1+\epsilon}{n}\EE \Big\|\sum_{i=1}^n\xi_i((x^i)\transpose,1,0_{D-d-1})\Big\|\\
    &\le \frac{1+\epsilon}{n}\sqrt{\sum_{i=1}^n(\|x^i\|^2 + 1)},   
\end{align}
and letting $\eps \to 0$ in $\mathrm{Rad}_n({\Phi},\mathcal{X})\le R_1^\eps \leq e^{2(1+\eps)^3}R_0^\eps$.

Finally, claims (a) and (c) together imply that ${\Phi}$ satisfies the Assumption~\ref{function_class_assumption} (b)--(c) with $A_1 = A_2 = 1$ and $A_3 = e^2$ and claim (b) implies ${\Phi}$ satisfies Assumption~\ref{function_class_assumption} (a) when $D\ge d+2$.

\end{proof}

\section{Application to the McKean-Vlasov SDE}\label{sec:mvsde}

This section presents an application of our proposed IPM to McKean-Vlasov Stochastic Differential Equation (SDE). Throughout this section, we fix a complete filtered probability space $(\Omega, \mc{F}, \mathbb{F}= \{\mc{F}_t\}_{0 \leq t \leq T}, \PP)$, supporting $n+1$ independent $m$-dimensional Brownian motions $\{W^i\}_{i=1}^n$ and $W$, as well as i.i.d. $\mc{F}_0$-measurable $\RR^d$-valued random variables $\{\eta^i\}_{i=1}^n$ with law $\eta$ and $\EE\|\eta\|^2 < + \infty$. We are interested in the rate of convergence as $n \to \infty$ of an $n$-interacting particle system satisfying:
\begin{align}\label{n-body SDE}
    \rmd X_t^{n,i} &= B(t,X_t^{n,i},\bar\mu_t^n)\rmd t + \Sigma(t,X_t^{n, i},\bar\mu_t^n)\rmd W_t^i, \quad X_0^{n,i} = \eta^i, \quad i \in \mc{I} := \{1,\dots, n\}, \notag \\
    \bar\mu_t^n &:= \frac{1}{n}\sum_{i=1}^n \delta_{X_t^{n,i}}.
\end{align}
More precisely, let $X_t$ solve the McKean-Vlasov stochastic differential equation:
\begin{align}\label{MV-SDE}
    \rmd X_t &= B(t,X_t,\mu_t)\rmd t + \Sigma(t,X_t,\mu_t)\rmd W_t, \quad X_0 = \eta, \notag \\
    \mu_t &:= \MCL(X_t),
\end{align}
where $\MCL(X_t)$ denotes the law of $X_t$, we are interested in quantifying $\EE \left[\sup_{0\le t \le T} \rmD_{{\Phi}}^2(\mu_t,\bar\mu_t^n)\right]$. To this end, we consider the following assumption. 

\begin{assu}\label{assu_MV}

The functions $B: [0,T]\times \bR^d \times \mathcal{P}^2(\bR^d) \rightarrow \bR^d$ and $\Sigma: [0,T]\times\bR^d \times \mathcal{P}^2(\bR^d) \rightarrow \bR^{d\times m}$ are Lipschitz in $(x, \mu)$ in the sense:
\begin{equation}\label{assump:MVSDE}
    \|B(t,x,\mu) - B(t,x',\mu')\|^2 + \|\Sigma(t,x,\mu) - \Sigma(t,x,\mu')\|_F^2 \le K^2(\|x-x'\|^2 + \rmD_{\Phi}^2(\mu,\mu')),
\end{equation}
for all $t \in [0,T]$, $x, x' \in \RR^d$ and $\mu, \mu' \in \mathcal{P}^2(\RR^d)$. Here $D_\Phi$ denotes a integral probability metric with the test function class $\Phi$ satisfying Assumption~\ref{function_class_assumption},
$\|\cdot\|_F$ denotes the Frobenius norm on $\bR^{d \times m}$, and $K$ is a positive constant.

Also, assume that
\begin{equation}
    \sup_{t \in [0,T]}\|B(t,0,\delta_0)\| + \|\Sigma(t,0,\delta_0)\|_F \le K.
\end{equation}
\end{assu}

\begin{rem}
Inequality \eqref{assump:MVSDE} is satisfied, for instance, 
when $B$ and $\Sigma$ are of the form
\begin{equation}
    (B, \Sigma) = (B, \Sigma)\left(t, x, \int_{\RR^d} f^1(x,y) \ud \mu(y),\int_{\RR^d} f^2(x,y) \ud \mu(y),\dots,\int_{\RR^d} f^k(x,y) \ud \mu(y) \right),
\end{equation}
and are Lipschitz in their second and third arguments with
$f^i(x,\cdot)$ in the class of test functions $\Phi$ satisfying Assumption~\ref{function_class_assumption}, for any $1\le i \le k$ and $x \in \bR^d$.
\end{rem}

\begin{thm}\label{thm:MV-Expectation}
Under Assumptions \ref{function_class_assumption} and \ref{assu_MV}, and the assumption that $\EE|\eta|^2\le K^2$, we have:
\begin{enumerate}
    \item[(a)] There exist unique adapted $L^2$-solutions for the $n$-body SDE \eqref{n-body SDE} and the McKean-Vlasov SDE \eqref{MV-SDE}.
    \item[(b)] There exists a constant $C >0$, depending only on $K$ and $T$, such that
    \begin{equation}
        \EE\sup_{0\le t \le T}\|X_t^{n,1}\|^2 \le C, \quad \EE\sup_{0\le t \le T}\|X_t\|^2 \le C.
    \end{equation}
    \item[(c)] There exists a constant $C>0$, depending only on $K$, $T$, $A_1$, $A_2$ and $A_3$, such that
    \begin{equation}
        \EE \sup_{0\le t \le T}\rmD^2_{\Phi}(\mu_t,\bar{\mu}_t^n) \le C\frac{\log n}{n}.
    \end{equation}
\end{enumerate}
\end{thm}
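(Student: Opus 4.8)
The plan is to dispatch (a) and (b) by standard SDE theory and concentrate on (c) via a coupling (propagation-of-chaos) argument. For (a): by Theorem~\ref{thm: iid}~(a), $\rmD_\Phi \le A_1\mathcal{W}_2$, so Assumption~\ref{assu_MV} makes $B,\Sigma$ globally Lipschitz in $(x,\mu)$ for $\|\cdot\|+\mathcal{W}_2(\cdot,\cdot)$ with linear growth; the $n$-body system \eqref{n-body SDE} is then a standard SDE on $\bR^{nd}$ (apply \cite[Theorem~3.3.1]{zhang2017backward}), and \eqref{MV-SDE} follows from the classical Picard iteration on $\mathcal{P}^2(C([0,T];\bR^d))$ with the sup-in-time $\mathcal{W}_2$ metric, closing the contraction through $\rmD_\Phi\le A_1\mathcal{W}_2$. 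For (b): apply Itô's formula to $\|X_t^{n,1}\|^2$ and to $\|X_t\|^2$, use the growth bound $\|B(t,x,\nu)\|\le K+K\|x\|+KA_1M_1(\nu)$ (from Assumption~\ref{assu_MV} and $\rmD_\Phi(\nu,\delta_0)\le A_1M_1(\nu)$) together with the elementary estimate $\rmD_\Phi(\bar\mu_t^n,\delta_0)\le\frac{A_1}{n}\sum_i\|X_t^{n,i}\|$, then Burkholder–Davis–Gundy and Gronwall; the constants depend only on $K,T$ (with $A_1$ entering the fixed growth constant), not on $n$ or $d$.

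For (c), I would introduce the decoupled McKean–Vlasov particles
\begin{equation}
    \rmd \bar X_t^i = B(t,\bar X_t^i,\mu_t)\rmd t + \Sigma(t,\bar X_t^i,\mu_t)\rmd W_t^i, \quad \bar X_0^i = \eta^i, \quad i \in \mc{I},
\end{equation}
which, driven by the same noises and initial data as \eqref{n-body SDE}, are i.i.d.\ copies of \eqref{MV-SDE}, so each $\bar X^i$ has law $\mu=\MCL(X_\cdot)$ on $C([0,T];\bR^d)$. Writing $\tilde\mu_t^n=\frac1n\sum_{i=1}^n\delta_{\bar X_t^i}$, the triangle inequality gives
\begin{equation}
    \EE\sup_{0\le t\le T}\rmD_\Phi^2(\mu_t,\bar\mu_t^n) \le 2\,\EE\sup_{0\le t\le T}\rmD_\Phi^2(\mu_t,\tilde\mu_t^n) + 2\,\EE\sup_{0\le t\le T}\rmD_\Phi^2(\tilde\mu_t^n,\bar\mu_t^n).
\end{equation}
For the first term: viewing the solved flow $\{\mu_t\}$ as fixed, \eqref{MV-SDE} becomes an SDE with time-dependent Lipschitz coefficients of linear growth (using (b) to bound $M_1(\mu_t)$ uniformly), so Theorem~\ref{thm: prior_estimation}~(a) applies with an adjusted constant and shows $\mu$ satisfies Assumption~\ref{assump:module} with $\alpha=\beta=\tfrac12$; with $\int\sup_t\|x_t\|^2\rmd\mu<\infty$ from (b), Theorem~\ref{thm: process}~(a) then yields $\EE\sup_t\rmD_\Phi^2(\mu_t,\tilde\mu_t^n)\le\frac{A_1^2}{n}\int\sup_t\|x_t\|^2\rmd\mu+\phi^2(n)=O(\tfrac{\log n}{n})$.

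For the second term I would couple $\tilde\mu_t^n,\bar\mu_t^n$ by the pairing $\bar X_t^i\leftrightarrow X_t^{n,i}$, giving $\rmD_\Phi^2(\tilde\mu_t^n,\bar\mu_t^n)\le A_1^2\mathcal{W}_1^2(\tilde\mu_t^n,\bar\mu_t^n)\le\frac{A_1^2}{n}\sum_i\|\bar X_t^i-X_t^{n,i}\|^2$, so by exchangeability it suffices to control $g(t):=\EE\sup_{0\le s\le t}\|\bar X_s^1-X_s^{n,1}\|^2$. Subtracting the two SDEs, using $\|B(s,\bar X_s^i,\mu_s)-B(s,X_s^{n,i},\bar\mu_s^n)\|\le K\|\bar X_s^i-X_s^{n,i}\|+K\rmD_\Phi(\mu_s,\bar\mu_s^n)$ (and likewise for $\Sigma$), BDG and Cauchy–Schwarz give $g(t)\le C\int_0^t g(s)\rmd s+C\int_0^t\EE\rmD_\Phi^2(\mu_s,\bar\mu_s^n)\rmd s$; then bounding $\rmD_\Phi^2(\mu_s,\bar\mu_s^n)\le 2\rmD_\Phi^2(\mu_s,\tilde\mu_s^n)+\frac{2A_1^2}{n}\sum_i\|\bar X_s^i-X_s^{n,i}\|^2$, taking expectations and using exchangeability and the first-term bound, I get $g(t)\le C\int_0^t g(s)\rmd s+C\tfrac{\log n}{n}$, whence Gronwall gives $g(T)\le C\tfrac{\log n}{n}$ and $\EE\sup_t\rmD_\Phi^2(\tilde\mu_t^n,\bar\mu_t^n)\le A_1^2 g(T)\le C\tfrac{\log n}{n}$; combining the two terms concludes.

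The main obstacle is precisely this coupled Gronwall step: because $\bar\mu_s^n$ feeds back into the coefficients of every $X^{n,i}$, the error $\rmD_\Phi(\mu_s,\bar\mu_s^n)$ is not an exogenous forcing, and the decoupling device is what splits it into a genuinely exogenous part (already estimated via Theorem~\ref{thm: process}) plus a self-referential part proportional to $g(s)$, making Gronwall applicable. Care is needed to keep every constant free of $n$ and $d$, which is exactly where the dimension-free Rademacher bound in Assumption~\ref{function_class_assumption}~(c), propagated through Theorems~\ref{thm: process} and \ref{thm: prior_estimation}, is indispensable.
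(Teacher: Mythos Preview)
Your proposal is correct and follows essentially the same propagation-of-chaos route as the paper: synchronous coupling with the i.i.d.\ McKean--Vlasov particles (your $\bar X^i$, the paper's $Y^{n,i}$), control of $\EE\sup_t\rmD_\Phi^2(\mu_t,\tilde\mu_t^n)$ via Theorems~\ref{thm: process} and~\ref{thm: prior_estimation}, and a Gronwall loop closing through $\rmD_\Phi^2(\tilde\mu_t^n,\bar\mu_t^n)\le\frac{A_1^2}{n}\sum_i\|\bar X_t^i-X_t^{n,i}\|^2$. The only cosmetic difference is that the paper runs Gronwall twice (first on the particle difference to obtain $\EE\sup_{s\le t}\|X_s^{n,i}-Y_s^{n,i}\|^2\le C\EE\int_0^t\rmD_\Phi^2(\bar\mu_s^n,\mu_s)\rmd s$, then on $t\mapsto\EE\sup_{s\le t}\rmD_\Phi^2(\mu_s,\bar\mu_s^n)$), while you substitute the triangle decomposition of $\rmD_\Phi^2(\mu_s,\bar\mu_s^n)$ back into the raw BDG estimate and Gronwall once on $g(t)$; the two are equivalent.
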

\begin{rem}
As will seen in the proof, Theorem~\ref{thm:MV-Expectation} (c) relies on the estimates in Theorem~\ref{thm: process}, about which we have mentioned in Remark~\ref{rem:log} that the logarithm term can be removed from $\phi(n)$ for all the examples of $\Phi$ mentioned in Section~\ref{sec:testfunction}. Consequently, claim (c) in Theorem~\ref{thm:MV-Expectation} can be further improved to be bounded by $C/n$ when using test functions in Section~\ref{sec:testfunction}.

\end{rem}
\begin{proof}
Throughout the proof, we will use $C$ as a generic positive constant depending only on $K$, $T$, $A_1$, $A_2$ and $A_3$, which may vary from line to line.

By the relation between $D_\Phi$ and $\mathcal{W}_2$ stated in Theorem \ref{thm: iid}~(a), Claim (a) follows from Lemma 3.2 and Theorem 3.3 in \cite{lacker2018mean}.

For claim (b), define $\bx = [x^1,\cdots, x^n]\transpose$, $L_n(\bx) = \frac{1}{n}\sum_{i=1}^n \delta_{x^i}$ and
\begin{align}
    \bm{B}(t,\bx) &= [B(t,x^1,L_n(\bx)),\dots,B(t,x^n,L_n(\bx))]\transpose, \\
    \bm{\Sigma}(\bx) &= \left[
    \begin{matrix}
    \Sigma(t,x^1,L_n(\bx))& & & \\
    &\Sigma(t,x^2,L_n(\bx))&&\\
    &&\ddots&\\
    &&&\Sigma(t,x^n,L_n(\bx))\end{matrix}\right],
\end{align}
where $\bm{\Sigma}$ has zero entries except for the $n$ blocks of size $d\times m$ on the main diagonal. 
Then we can rewrite the $n$-body SDE \eqref{n-body SDE} as 
\begin{equation}\label{Vector n-body SDE}
    \rmd \bX_t^n = \bm{B}(t,\bX_t^n)\rmd t + \bm{\Sigma}(t,\bX_t^n) \rmd \bm{W}_t,
\end{equation}
where $\bX_t^n = [X_t^{n,1},\cdots,X_t^{n,n}]\transpose$ and $\bm{W}_t = [W_t^1,\cdots,W_t^n]\transpose$.
Following Lemma 3.2 in \cite{lacker2018mean}, we obtain that $\bm{B}$ and $\bm{\Sigma}$ are $2L$-Lipschitz. Standard SDE estimates ({\it cf.} \cite[Theorem~3.2.2]{zhang2017backward}) give
\begin{equation}
    \sup_{0 \le t \le T} \EE \sum_{i=1}^n \|X_t^{n,i}\|^2 \le Cn.
\end{equation}
Notice that $X_t^{n,1},\dots,X_t^{n,n}$ are symmetric, one has
\begin{equation}
   \sup_{0\le t \le T} \EE\|X_t^{n,1}\|^2 \le C.
\end{equation}
Then, using the Burkholder-Davis-Gundy inequality ({\it cf.}~\cite[Theorem~2.4.1]{zhang2017backward}), we have
\begin{align}
    \EE\sup_{0\le t \le T}\|X_t^{n,1}\|^2 &\le C\left[\EE\|\eta\|^2 + \EE \int_{0}^{T}|B(t,X_t^{n,1},\bar{\mu}_t^n)|^2\rmd t + \EE \sup_{0\le u \le T}\|\int_{0}^{T}\Sigma^i(t,X_t^{n,1},\bar{\mu}_t^n)\rmd W_t\|^2\right] \\
    &\le C\left[\EE\|\eta\|^2 + \EE \int_{0}^{T}|B(t,X_t^{n,1},\bar{\mu}_t^n)|^2\rmd t + \EE\int_{0}^{T}\|\Sigma(t,X_t^{n,1},\bar{\mu}_t^n)\|_F^2\rmd t\right] \\
    &\le C[1 + \sup_{0\le t \le T}\EE\|X_t^{n,1}\|^2 + \sup_{0\le t \le T}\rmD^2_\Phi(\bar{\mu}_t^n,\delta_0)] \le C[1 + \sup_{0\le t \le T}\EE\|X_t^{n,1}\|^2] \le C.
\end{align}
By \cite[Theorem 3.3]{lacker2018mean}, we know that, as $n\rightarrow\infty$,
\begin{equation}
    X^{n,1} \Rightarrow X, \text{ in distribution in } C([0,T];\bR^d).
\end{equation}
We then obtain the second inequality in claim (b) through the Fatou's Lemma.

For claim (c), let 
\begin{equation}
    \rmd Y_t^{n,i} = B(t,Y_t^{n,i},\mu_t)\rmd t + \Sigma(t,Y_t^{n,i},\mu_t)\rmd W_t^i,\quad Y_0^{n,i} = \eta^i.
\end{equation}
Then, $\{Y_t^{n,i}\}_{i=1}^n$ are $n$ i.i.d. copies of $X_t$. Following \cite[Theorem 3.2.4]{zhang2017backward}, we obtain, $\forall t \in [0,T]$,
\begin{multline}
      \EE\Big[\sup_{0\le s \le t}\|X_s^{n,i} - Y_s^{n,i}\|^2\Big] \\
      \le C\EE\int_{0}^{t}[\|B(s,X_s^{n,i},\bar{\mu}_s^n) - B(s,Y_s^{n,i},\mu_s)\|^2 + \|\Sigma(s,X_s^{n,i},\bar{\mu}_s^n) - \Sigma(s,Y_s^{n,i},\mu_s)\|_F^2] \rmd s.
\end{multline}
Using the Lipschitz condition \eqref{assump:MVSDE} in Assumption \ref{assu_MV}, we deduce
\begin{equation}
     \EE\Big[\sup_{0\le s \le t}\|X_s^{n,i} - Y_s^{n,i}\|^2\Big] \le C\EE\int_{0}^{t}\Big[\sup_{0\le u\le s}\|X_u^{n,i} - Y_u^{n,i}\|^2 + \rmD_{\Phi}^2(\bar{\mu}_s^n,\mu_s)\Big] \rmd s. 
\end{equation}
Then Gronwall's inequality gives
\begin{equation}
    \EE\Big[\sup_{0\le s \le t}\|X_s^{n,i} - Y_s^i\|^2\Big] \le C\EE\Big[\int_{0}^{t}\rmD_{\Phi}^2(\bar{\mu}_s^n,\mu_s) \rmd s \Big].
\end{equation}
Let $\hat \mu_t^n$ be the empirical measure of $\{Y_t^{n,i}\}_{i=1}^n$, {\it i.e.},
\begin{equation}
    \hat{\mu}_t^n := \frac{1}{n}\sum_{i=1}^n \delta_{Y_t^{n,i}}.
\end{equation}
With $\sup_{0 \le t\le T}\rmD_{\Phi}(\mu_t,\delta_0) \le A_1\sup_{0\le t \le T}\EE\|X_t\|\le C$, we obtain
\begin{equation}
    \sup_{0\le t \le T}[\|B(t,0,\mu_t)\| + \|\Sigma(t,0,\mu_t)\|_F] \le C.
\end{equation}
Viewing $\mu_t$ as a given function of $t$,  the McKean-Vlasov SDE \eqref{MV-SDE} satisfies the conditions in Theorem \ref{thm: prior_estimation}. Thus, combining results in Theorems~\ref{thm: process} and \ref{thm: prior_estimation}, we have
\begin{equation}
    \EE \sup_{0\le t \le T}\rmD_{\Phi}^2(\mu_t,\hat{\mu}_t^n) \le C\frac{\log n}{n}. 
\end{equation}
By the definition of $\rmD_{\Phi}$, one has
\begin{equation}
    \rmD_{\Phi}^2(\bar{\mu}_t^n,\hat{\mu}_t^n) \le \Big[\frac{A_1}{n}\sum_{i=1}^n\|X_t^{n,i}- Y_t^{n,i}\|\Big]^2 \le \frac{A_1^2}{n}\sum_{i=1}^n \|X_t^{n,i}- Y_t^{n,i}\|^2. 
\end{equation}
Therefore
\begin{multline}
      \EE\Big[\sup_{0\le s \le t} \rmD^2_{\Phi}(\mu_s,\bar{\mu}_s^n)\Big] \le 2 \EE\Big[\sup_{0 \le s \le t}\rmD^2_{\Phi}(\bar{\mu}_s^n,\hat{\mu}_s^n) + \sup_{0\le s \le t}\rmD^2_{\Phi}(\mu_s,\hat{\mu}_s^n)\Big] \\
      \le C\EE\Big[\int_{0}^{t}\rmD_{\Phi}^2(\mu_s,\bar{\mu}_s^n) \rmd s\Big] + C\frac{\log n}{n}.
\end{multline}
With Gronwall's inequality, we can obtain the desired result.
\end{proof}

We can furthermore establish a concentration inequality for $\sup_{0\le t \le T}\rmD_{\Phi}(\mu_t,\bar{\mu}_t^n)$.
\begin{thm}
Under Assumptions \ref{function_class_assumption} and \ref{assu_MV}, and assume that
\begin{equation}
   \mathcal{W}_1^2(\mu_0,\tilde{\mu}) \le 2K^2\mathcal{H}(\tilde{\mu}|\mu_0) \quad \forall \tilde{\mu} \ll \mu_0, \text{ and } \EE\|\eta\|
    ^2 \le K^2,
\end{equation}
and
\begin{equation}
    \sup_{0\le t \le T, x \in \bR^d}\|\Sigma(t,x)\|_F \le K.
\end{equation}
Then, there exists a constant $C > 0$, depending only on $T$, $K$ and $A_1$, such that
\begin{equation}
    \mathbb{P}(\sup_{0\le t \le T} \rmD_{\Phi}(\mu_t,\bar{\mu}_t^n) - \EE\sup_{0\le t \le T} \rmD_{\Phi}(\mu_t,\bar{\mu}_t^n) \ge a) \le \exp(-\frac{na^2}{C}).
\end{equation}
Combining with Theorem \ref{thm:MV-Expectation}, we obtain that for any $\delta \in (0,1)$, with probability at least $1-\delta$
\begin{equation}
    \sup_{0\le t \le T}\rmD_{\Phi}(\mu_t,\bar{\mu}_t^n) \le C(\sqrt{\log n} + \sqrt{-\log\delta})n^{-\frac{1}{2}},
\end{equation}
where $C$ may depend on $K$, $T$, $A_1$, $A_2$ and $A_3$. 
\end{thm}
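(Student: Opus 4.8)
The plan is to run the same concentration-of-measure argument used for Theorem~\ref{thm: iid}~(c) and Theorem~\ref{thm: process}~(b), but now applied to the whole interacting system \eqref{Vector n-body SDE} regarded as a \emph{single} SDE on $\RR^{dn}$ whose coefficients are Lipschitz with a constant free of $n$ and $d$. Recall from the proof of Theorem~\ref{thm:MV-Expectation} that $\bm{B}$ and $\bm\Sigma$ in \eqref{Vector n-body SDE} are Lipschitz on $\RR^{dn}$ (for the Euclidean norm): summing \eqref{assump:MVSDE} over the $n$ blocks and using $\rmD_\Phi \le A_1 \mathcal{W}_1$ together with the synchronous coupling and Cauchy--Schwarz gives $\|\bm B(t,\bm x)-\bm B(t,\bm y)\|^2 + \|\bm\Sigma(t,\bm x)-\bm\Sigma(t,\bm y)\|_F^2 \le K^2(1+A_1^2)\sum_{i=1}^n\|x^i-y^i\|^2$, so $(\bm B,\bm\Sigma)$ is $K'$-Lipschitz with $K' = K\sqrt{1+A_1^2}$ depending only on $K$ and $A_1$.

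First I would establish a transportation inequality for the path law $\nu^n := \MCL(\bm{X}^n_\cdot)$ on $C([0,T];\RR^{dn})$ equipped with the metric $\rho(\bm x,\bm y) := \sum_{i=1}^n \sup_{0\le t\le T}\|x^i_t - y^i_t\|$. Since $\eta^1,\dots,\eta^n$ are i.i.d.\ with common law $\eta$ satisfying $\mathcal{W}_1^2(\mu_0,\tilde\mu)\le 2K^2\mathcal{H}(\tilde\mu|\mu_0)$, the product $\eta^{\otimes n}$ satisfies $\mathcal{W}_1^2 \le 2nK^2\,\mathcal{H}$ on $\RR^{dn}$ for the $\ell^1$-combined metric $\sum_i\|x^i-y^i\|$, by Theorem~5.2 in \cite{delarue2020master}. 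Feeding this initial $\mathrm{T}_1$-constant and the dimension-free Lipschitz constant $K'$ into Theorem~5.5 in \cite{delarue2020master} (exactly as in the proof of Theorem~\ref{thm: prior_estimation}~(b)), one obtains a constant $C_1$ depending only on $K$, $T$ and $A_1$ with
\begin{equation}
    \mathcal{W}_1^2(\nu^n,\tilde\nu) \le C_1\, n\, \mathcal{H}(\tilde\nu|\nu^n), \qquad \forall\, \tilde\nu \ll \nu^n ;
\end{equation}
the crucial point is that the only $n$-dependence here is the explicit linear factor.

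Next I would introduce the deterministic functional $G(\bm x) := \sup_{0\le t\le T}\rmD_\Phi\big(\mu_t,\tfrac1n\sum_{i=1}^n\delta_{x^i_t}\big)$ on $C([0,T];\RR^{dn})$ and verify, using that $\rmD_\Phi$ is a metric (Theorem~\ref{thm: iid}~(a)), that $\rmD_\Phi\le A_1\mathcal{W}_1$, and the synchronous coupling of empirical measures, that $|G(\bm x)-G(\bm y)| \le \sup_t \rmD_\Phi\big(\tfrac1n\sum_i\delta_{x^i_t},\tfrac1n\sum_i\delta_{y^i_t}\big) \le \tfrac{A_1}{n}\sum_{i=1}^n\sup_t\|x^i_t-y^i_t\| = \tfrac{A_1}{n}\rho(\bm x,\bm y)$, i.e.\ $G$ is $\tfrac{A_1}{n}$-Lipschitz for $\rho$. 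Since $G(\bm X^n_\cdot) = \sup_{0\le t\le T}\rmD_\Phi(\mu_t,\bar{\mu}_t^n)$, Theorem~5.1 in \cite{delarue2020master} yields
\begin{equation}
    \PP\Big(\sup_{0\le t\le T}\rmD_\Phi(\mu_t,\bar{\mu}_t^n) - \EE\sup_{0\le t\le T}\rmD_\Phi(\mu_t,\bar{\mu}_t^n) \ge a\Big) \le \exp\Big(-\frac{a^2}{2(A_1/n)^2(C_1 n/2)}\Big) = \exp\Big(-\frac{na^2}{C}\Big),
\end{equation}
with $C = A_1^2 C_1$ depending only on $T$, $K$ and $A_1$, which is the first assertion. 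For the high-probability bound, Jensen's inequality gives $\EE\sup_t\rmD_\Phi(\mu_t,\bar{\mu}_t^n) \le \big(\EE\sup_t\rmD_\Phi^2(\mu_t,\bar{\mu}_t^n)\big)^{1/2} \le (C\log n/n)^{1/2}$ by Theorem~\ref{thm:MV-Expectation}~(c); choosing $a$ with $\exp(-na^2/C)=\delta$ then shows that with probability at least $1-\delta$ one has $\sup_t\rmD_\Phi(\mu_t,\bar{\mu}_t^n) \le (C\log n/n)^{1/2} + (C\log(1/\delta)/n)^{1/2} \le C(\sqrt{\log n}+\sqrt{-\log\delta})n^{-1/2}$ after enlarging $C$ (which now also absorbs the dependence on $A_2,A_3$ coming from Theorem~\ref{thm:MV-Expectation}).

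The main obstacle is the second step: ensuring that the transportation constant for the path law of the $n$-body system scales only linearly in $n$ and is independent of $d$. This rests entirely on the dimension-free Lipschitz bound for $(\bm B,\bm\Sigma)$ and on carefully tracking the constant through the tensorization (Theorem~5.2) and the SDE-propagation (Theorem~5.5) in \cite{delarue2020master} --- any hidden $n$- or $d$-dependence there would destroy the $n^{-1/2}$ rate. A secondary technical point is to keep the metric on path space consistent across these citations (the $\ell^1$-in-particles, uniform-in-time metric $\rho$), since Theorem~5.1 requires $G$ to be Lipschitz precisely for the metric in which $\nu^n$ obeys the transportation inequality.
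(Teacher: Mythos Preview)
Your proposal is correct and follows essentially the same route as the paper: both rewrite the $n$-body system as the single SDE \eqref{Vector n-body SDE} with a dimension-free Lipschitz constant, invoke the transportation--concentration machinery of \cite{delarue2020master} (Theorems~5.1, 5.2 and 5.5) to obtain Gaussian concentration for $(A_1/n)$-Lipschitz path functionals with a constant scaling linearly in $n$, and then apply this to $G(\bm x)=\sup_t \rmD_\Phi(\mu_t,\tfrac1n\sum_i\delta_{x^i_t})$. Your explicit Lipschitz constant $K'=K\sqrt{1+A_1^2}$ for $(\bm B,\bm\Sigma)$ and your careful tracking of the $n$-scaling through tensorization and SDE-propagation are exactly the points the paper leaves implicit in its one-line citation of \cite[Theorem~5.5]{delarue2020master}.
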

\begin{proof}
Throughout this proof, we will still use $C$ as a positive constant depending only on some constants clearly mentioned in the above theorem, which may vary from line to line.

Recall that the $n$-particle system can be rewritten as in \eqref{Vector n-body SDE}, and the results from \cite[Theorem 5.5]{delarue2020master}: there exists a constant $C>0$, depending only on $K$ and $T$, such that 
\begin{equation}\label{concentration-inequality}
    \mathbb{P}( F(\bX^n) - \EE F(\bX^n) \ge a) \le \exp(-\frac{a^2}{n M^2C}),
\end{equation}
for any function $F:C([0,T];\bR^{d\times n}) \rightarrow \bR$ being $M$-Lipschitz in the sense that 
\begin{equation}
    |F(\bm{x}) - F(\bm{y})| \le M \sum_{i=1}^n\sup_{0\le t \le T}\|x^i_t - y^i_t\|,
\end{equation}
for any $\bm{x} := (x^1,\dots,x^n), \bm{y} := (y^1,\dots,y^n)$ with $x^i,y^i \in C([0,T];\bR^{d})$.

Now, for any $\bm{x} = (x^1,\dots,x^n)$ with $x^i \in C([0,T];\bR^{d})$, we define 
\begin{equation}
    G(\bm{x}) = \sup_{0\le t \le T}\sup_{f \in \Phi}\Bigl|\frac{1}{n}\sum_{i=1}^{n} f(x^i_t) - \EE f(X_t)\Bigr|,
\end{equation}
then $G(\bX^n) = \sup_{0\le t \le T}\rmD_{\Phi}(\mu_t,\bar{\mu}_t^n)$, and
\begin{align}
    |G(\bm{x}) - G(\bm{y})| &\le \frac{1}{n}\sup_{0\le t \le T}\sup_{f \in \Phi}\Bigl|\sum_{i=1}^n f(x^i_t) - \sum_{i=1}^n f(y^i_t)\Bigr| \notag \\
    &\le  \frac{A_1}{n}\sum_{i=1}^n\sup_{0\le t \le T}\|x^i_t- y^i_t\| .
\end{align}
Then, our conclusion follows from the last equation and equation \eqref{concentration-inequality}.
\end{proof}

\section{Application to Mean-Field Games}

In this section, we shall show that, for a homogeneous $n$-player game, the strategy derived by its mean-field counterpart produces an $\varepsilon$-Nash equilibrium, where $\varepsilon$ is free of the dimension of the state processes.

Following the setup in \cite{carmona2013probabilistic}, we consider a homogeneous $n$-player stochastic differential game
\begin{equation}\label{def:Xt}
    \ud X_t^i = b(t, X_t^i, \bar \nu_t^n, \alpha_t^i) \ud t + \sigma(t, X_t^i, \bar \nu_t^n, \alpha^i) \ud W_t^i, \quad 0 \leq t \leq T, \quad i \in \mc{I} \equiv \{1, \ldots, n\},
\end{equation}
where each player $i$ controls her private state $X_t^i \in \RR^d$ through an $\RR^k \supseteq A$-valued action $\alpha_t^i$, $W_t^i = (W_t^i)_{0 \leq t \leq T}$ are $m$-dimensional independent Brownian motions, $b$ and $\sigma$ are deterministic measurable functions, $(b, \sigma)$: $[0,T] \times \bR^d \times \mathcal{P}(\bR^d) \times A \to (\bR^d, \bR^{d\times m})$, and $\bar\nu_t^n$ is the empirical measure of $(X_t^1, \ldots, X_t^n)$ defined by 
\begin{equation}
    \bar\nu_t^n(\mathrm{d} x) := \frac{1}{n} \sum_{i=1}^n \delta_{X_t^i} (\mathrm{d} x).
\end{equation}
Each player aims to minimize the expected cost over the period $[0,T]$ by taking her action $\alpha^i \in \mbA$:
\begin{equation}\label{def:J}
    J^{i}(\balpha) := \EE\left[\int_0^T f(t, X_t^i, \bar \nu_t^n, \alpha^i_t) \ud t + g(X_T^i, \bar \nu_T^n)\right],
\end{equation}
where $\mbA$ denotes the set of all admissible strategies: 
\begin{equation}
    \mbA = \left\{A\text{-valued progressively measurable processes } (\alpha_t)_{0\leq t \leq T}: \EE\left[\int_0^T \abs{\alpha_t}^2 \ud t\right] < \infty\right\},
\end{equation}
and $f$ and $g$ are deterministic measurable functions, $f: [0,T] \times \bR^d\times \MCP(\bR^d)\times A \to \bR$,  $g: \bR^d \times \MCP(\bR^d) \to \RR$. Since the players interact through their empirical measure $\bar \nu_t^n$, which depends on all players' stragety $\balpha = (\alpha^1, \ldots, \alpha^n) \in \mbA^n$, so does the cost functional for player  $i$, $J^i(\balpha)$. Here $\mbA^n$ is the product space of $n$ copies of $\mbA$.

To solve such games, we are interested in the concept of Nash equilibrium. That is a tuple $\bm \alpha^\ast = (\alpha^{1,\ast}, \ldots, \alpha^{n, \ast}) \in \mathbb{A}^n$ such that 
\begin{equation}
\forall i \in \mc{I}, \text{ and } \alpha^i \in \mathbb{A}, \quad J^i(\bm \alpha^\ast)  \leq J^i(\alpha^{1, \ast}, \ldots, \alpha^{i-1, \ast}, \alpha^i, \alpha^{i+1, \ast}, \ldots, \alpha^{n, \ast}).
\end{equation}

For homogeneous games with large $n$, if the system lacks tractability and needs to rely on numerical methods for Nash equilibrium, the conventional algorithms soon lose their efficiency, and one may resort to recently developed machine learning tools \cite{Hu2:19,HaHu:19,han2020convergence}. On the other hand, one could utilize its limiting mean-field strategy to approximate the Nash equilibrium. More precisely, one can first obtain the optimal control $\alpha$ from the mean field games using the following steps: 
\begin{enumerate}[(i)]
    \item\label{eq:step1} Fixed a deterministic measure $\mu_t \in \MCP(\RR^d)$, $\forall t \in [0, T]$;
    \item\label{eq:step2} Solve the standard stochastic control problem:
        \begin{align}
            &\inf_{\alpha \in \mbA} \EE\left[\int_0^T f(t, X_t, \mu_t, \alpha_t)\ud t + g(X_T, \mu_T) \right] \\
            &\text{subject to: }\ud X_t = b(t, X_t, \mu_t, \alpha_t) \ud t + \sigma(t, X_t, \mu_t, \alpha_t) \ud W_t, \quad X_0 = x_0;
        \end{align}
    \item\label{eq:step3} Determine the flow of measures $\mu_t$ such that $\forall t \in [0,T]$, $\MCL({X_t^{\ast, \mu_t}}) = \mu_t$, where $X_t^{\ast, \mu_t}$ denotes the state process associated to the optimal control given $\mu_t$ in step~\eqref{eq:step2}.
\end{enumerate}
Then one can construct an $\varepsilon$-Nash equilibrium from it if the optimal control $\alpha$ given by the fixed-point argument (step~\eqref{eq:step3}) is in a feedback form.
We will make this statement rigorous in Theorem~\ref{thm:MFG}.

Throughout this section, the following assumptions are in force.
\begin{assu}\label{assu_MFG}
\begin{enumerate}[(a)]
    \item\label{assu_b} The drift $b$ is an affine function of $\alpha$ and $x$: 
    \begin{equation}
        b(t, x, \mu, \alpha) = b_0(t, \mu) + b_1(t)x + b_2(t)\alpha,
    \end{equation}
    where $b_0 \in \RR^d$, $b_1 \in \RR^{d\times d}$, $b_2 \in \RR^{d \times k} $ are measurable functions and bounded by $K$. Moreover, for any $\mu, \mu' \in \MCP^2(\RR^d)$: $\abs{b_0(t, \mu') - b_0(t, \mu)} \leq K \dF(\mu, \mu')$. The volatility $\sigma(t, x, \mu, \alpha) \in \RR^{d \times m}$ is a constant matrix. 
    
    \item\label{assu_fg} There exist two constants $\lambda$ and $K$, such that for any $(t, \mu) \in [0,T] \times \MCP^2(\RR^d)$, the function $f(t, \cdot, \mu, \cdot) \in \RR$ is once continuously differentiable with Lipschitz-continuous derivatives, with the Lipschitz constants being bounded by $K$. Moreover, it satisfies the convexity assumption:
    \begin{equation}
        f(t, x', \mu, \alpha') - f(t, x, \mu, \alpha) - \average{(x' - x, \alpha' - \alpha), \partial_{(x, \alpha)}f(t, x, \mu, \alpha)} \geq \lambda \abs{\alpha' - \alpha}^2.
    \end{equation}
    The functions $f, \partial_x f$, and $\partial_\alpha f$ are locally bounded. 
        The functions $f(\cdot, 0, \delta_0, 0)$, $\partial_x f(\cdot, 0, \delta_0, 0)$ and $\partial_\alpha f(\cdot, 0, \delta_0, 0)$ are bounded by $K$, and for all $t \in [0,T]$, $x, x' \in \RR^d$, $\alpha, \alpha' \in \RR^k$ and $\mu, \mu' \in \MCP^2(\RR^d)$, it holds:
    \begin{align}
        \abs{(f,g)(t, x', \mu', \alpha') - (f,g)(t, x, \mu, \alpha)} \leq &K\left[1 + \abs{(x', \alpha')} +\abs{(x,\alpha)}+  M_2(\mu) + M_2(\mu') \right] \\
        &\times \left[\abs{(x', \alpha') - (x, \alpha)} + \dF(\mu', \mu)\right] \label{eq:fglip}.
    \end{align}
    
    \item The function $g(\cdot, \cdot)$ is locally bounded, and for any $\mu \in \MCP^2(\RR^d)$, the function $g(\cdot, \mu)$ is once continuously differentiable, convex, and has a $K$-Lipschitz-continuous first order derivative. 

    \item For all $(t, x, \mu) \in [0,T] \times \RR^d \times \MCP^2(\RR^d)$, $\abs{\partial_x f(t,x,\mu,0)} \leq K$.
    \item For all $(t,x) \in [0,T] \times \RR^d$, $\average{x, \partial_x f(t, 0, \delta_x, 0)} \geq -K(1+\abs{x})$, $\average{x, \partial_x g(0, \delta_x)}\geq -K(1+\abs{x})$.
\end{enumerate}
\end{assu}

In the sequel, a constant $C$ will frequently appear in the theorems and proofs. It may depend on the bounds that appear in the above assumption 
($\lambda$, $K$, $b_0$, $b_1$, $A_1$, $A_2$, $A_3$, {\it etc.}) and possibly vary from line to line. But it will be  independent of the dimension $d$  of the state process $X_t^i$ and the number of players $n$ in the game.

\begin{rem}\label{rem:alphahat}
Let $H$ be the Hamiltonian associated to the problem, with uncontrolled volatility, it reads
\begin{equation}\label{def:H}
    H(t,x,\mu, p, \alpha) = \average{b(t, x, \mu, \alpha), p} + f(t, x, \mu, \alpha).
\end{equation}
Items \eqref{assu_b}--\eqref{assu_fg} in Assumption~\ref{assu_MFG} ensure the uniqueness of minimizer $\hat\alpha$ of $H$, the measurablity, local boundedness,  Lipschitz-continuity of $\hat \alpha(t,x,\mu,y)$ in $(x,y)$ uniformly in $(t, \mu) \in [0,T] \times \mc{P}^2(\RR^d)$. Moreover, the Lipschitz constant is free of $d$. A repeatedly used estimate is $\abs{\hat\alpha(t, x, \mu, y)} \leq \lambda^{-1}(\abs{\partial_\alpha f(t, x, \mu, 0)} + \abs{b_2(t)}\abs{y})$.
For detailed proof, see \cite[Lemma 1]{carmona2013probabilistic}.
\end{rem}

The probabilistic approach of \eqref{eq:step1}--\eqref{eq:step3} results in solving the following McKean-Vlasov forward backward stochastic differential equations (FBSDEs):
\begin{align}\label{def:FBSDE}
  \begin{aligned}
    &\ud X_t = b(t, X_t, \MCL({X_t}), \hat\alpha(t, X_t, \MCL({X_t}), Y_t)) \ud t + \sigma \ud W_t, \\
    &\ud Y_t = -\partial_x H(t, X_t, \MCL({X_t}), Y_t, \hat\alpha(t, X_t, \MCL({X_t}), Y_t)) \ud t + Z_t \ud W_t,
\end{aligned}  
\end{align}
with the initial condition $X_0 = x_0 \in \RR^d$ and the terminal condition $Y_T = \partial_x g(X_T, \MCL({X_T}))$. More precisely, the following result holds.
\begin{thm}\label{thm:FBSDE}
Under Assumption~\ref{assu_MFG}, the FBSDE system \eqref{def:FBSDE} has a solution $(X_t, Y_t, Z_t)$, and there exists the FBSDE value function $u: [0,T] \times \RR^d \to \RR^d$ such that it has linear growth and Lipschitz in $x$:
\begin{equation}\label{eq:u}
    \abs{u(t, x)} \leq C(1+\abs{x}), \quad \abs{u(t, x) - u(t, x')} \leq C\abs{x - x'}, 
\end{equation}
for some constant $C \geq 0$, and such that $Y_t = u(t, X_t)$ $\PP$-a.s., $\forall t \in [0,T]$ and $x, x' \in \RR^d$. Moreover, for any $\ell \geq 1$, $\EE[\sup_{0 \leq t \leq T}\abs{X_t}^\ell] < \infty$, and the optimal cost $J$ of the limiting mean-field problem \eqref{eq:step1}--\eqref{eq:step3} is given by
\begin{equation}\label{def:mfgJ}
      J = \EE\left[g(X_T, \MCL({X_T})) + \int_0^T f(t, X_T, \MCL({X_t}), \hat\alpha(t, X_t, \MCL({X_t}), Y_t)) \ud t\right],
\end{equation}
where $\hat \alpha$ is the minimizer of $H$ defined in \eqref{def:H}.
\end{thm}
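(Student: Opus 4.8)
The plan is to prove Theorem~\ref{thm:FBSDE} along the probabilistic (Pontryagin) route to mean-field games of \cite{carmona2013probabilistic}, verifying that Assumption~\ref{assu_MFG} supplies exactly the structural conditions required and, crucially, that every constant generated along the way depends only on $\lambda$, $K$, $T$, $A_1$, $A_2$, $A_3$ and never on the dimension $d$. The entry point is Remark~\ref{rem:alphahat}: strong convexity of $f$ in $\alpha$, together with the affine drift and constant volatility, forces the reduced Hamiltonian $H$ in \eqref{def:H} to have a unique minimizer $\hat\alpha(t,x,\mu,y)$, which is measurable, locally bounded, and Lipschitz in $(x,y)$ uniformly in $(t,\mu)$ with a $d$-free Lipschitz constant. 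This reduces matters to solving the McKean--Vlasov FBSDE \eqref{def:FBSDE} and then reading off the value function and the cost.

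First I would freeze a deterministic flow $\mu=(\mu_t)_{0\le t\le T}$ in a closed bounded convex subset of $C([0,T];\MCP^2(\RR^d))$ and solve the decoupled FBSDE obtained by substituting $\MCL(X_t)=\mu_t$ into \eqref{def:FBSDE}. Existence and uniqueness for this standard FBSDE follow from the continuation method as implemented in \cite[Section~3]{carmona2013probabilistic}; the monotonicity condition it requires is a direct consequence of the $\lambda$-convexity of $(x,\alpha)\mapsto f$ and the convexity of $g$ in $x$. Routine a priori estimates then yield $\EE[\sup_{0\le t\le T}|X_t^\mu|^\ell]\le C(1+|x_0|^\ell+\sup_{t}M_2(\mu_t)^\ell)$ for every $\ell\ge 1$ together with a Lipschitz bound on the associated decoupling field, all with $C$ depending only on the constants listed above; the $d$-freeness comes from the fact that the coefficients depend on the measure argument only through $\dF$, which by Theorem~\ref{thm: iid}~(a) is dominated by $\mathcal{W}_2$.

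Next I would run a Schauder fixed-point argument for the map $\Phi:\mu\mapsto(\MCL(X_t^\mu))_{0\le t\le T}$: the moment and Hölder estimates on $X^\mu$ give tightness, hence relative compactness of the image, while stability of FBSDE solutions under perturbations of the coefficients (again felt through $\dF$) gives continuity of $\Phi$. A fixed point is precisely a solution $(X_t,Y_t,Z_t)$ of \eqref{def:FBSDE}. Since the frozen-measure FBSDE is uniquely solvable from every initial time and state, the decoupling field $u(t,x)$ is well defined, satisfies $Y_t=u(t,X_t)$ $\PP$-a.s., and inherits the linear-growth and Lipschitz bounds \eqref{eq:u} with a $d$-free constant; the bounds $\EE[\sup_t|X_t|^\ell]<\infty$ follow from the a priori estimates at the fixed point. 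Finally, the sufficiency part of the stochastic maximum principle, legitimate here by the convexity of $f$ and $g$ and the form of the adjoint equation in \eqref{def:FBSDE}, shows that $\hat\alpha(t,X_t,\MCL(X_t),Y_t)$ is optimal for the control problem of step~\eqref{eq:step2} evaluated at the fixed point, which delivers the cost representation \eqref{def:mfgJ}.

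The main obstacle is not any single estimate but the bookkeeping needed to keep the whole construction dimension-free: one must check that the continuation-method constants, the a priori FBSDE estimates, and in particular the Lipschitz constant of the decoupling field $u$ never acquire $d$-dependence. This rests on two facts --- the $d$-free Lipschitz bound for $\hat\alpha$ from Remark~\ref{rem:alphahat}, and the fact that the measure enters the coefficients only through $\dF$, a metric weaker than $\mathcal{W}_2$ --- together with the global (rather than merely small-time) solvability of the FBSDE, for which the convexity/monotonicity structure of Assumption~\ref{assu_MFG} is indispensable.
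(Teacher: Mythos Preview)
Your proposal is correct and follows essentially the same route as the paper: both invoke the probabilistic framework of \cite{carmona2013probabilistic} (continuation method for the frozen-measure FBSDE, Schauder fixed point for the McKean--Vlasov coupling, and the stochastic maximum principle for optimality), with the key observation that $\dF\le A_1\mathcal{W}_2$ from Theorem~\ref{thm: iid}~(a) ensures the Wasserstein-based hypotheses of that reference are met. The paper simply cites \cite[Theorems~1 and~2]{carmona2013probabilistic} directly, whereas you unpack the argument and track the dimension-freeness of the constants explicitly---a worthwhile addition, but not a different proof.
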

\begin{proof}
The existence of solution to \eqref{def:FBSDE} and related estimates follow from \cite[Theorem 2]{carmona2013probabilistic}, because the assumptions therein are satisfied using Theorem~\ref{thm: iid} (a) based on the Wasserstein metric and our proposed IPM. The statement on $J$ is a consequence of the stochastic maximum principle when the frozen flow of measures is $\MCL({X_t})$, for instance see \cite[Theorem 1]{carmona2013probabilistic}.
\end{proof}

\begin{thm}\label{thm:MFG}
Let $(X_t, Y_t, Z_t)$ be a solution of \eqref{def:FBSDE}, $u$ be the corresponding FBSDE value function, and $\mu_t = \MCL({X_t})$ be the marginal probability measure, then 
\begin{equation}\label{def_approxNash}
    \bar\alpha_t^{n,i} = \hat\alpha(t, X_t^i, \mu_t, u(t, X_t^i)), \quad i \in \mc{I},
\end{equation}
where $X_t^i$ follows \eqref{def:Xt} with strategy $\bar\alpha_t^{n,i}$:
\begin{equation}\label{eq:Xt}
    \ud X_t^i = b(t, X_t^i, \bar\nu_t^n, \hat\alpha(t, X_t^i, \mu_t, u(t, X_t^i)) \ud t + \sigma \ud W_t^i,  \quad \bar\nu_t^n = \frac{1}{n} \sum_{i=1}^n \delta_{X_t^i},
\end{equation}
is an $\varepsilon_n$-Nash equilibrium of the $n$-player problem \eqref{def:Xt}--\eqref{def:J} with $\eps_n = C/\sqrt n$, {\it i.e.}, for any progressively measurable strategy $\beta^i$ such that $\EE[\int_0^T |\beta_t^i|^2\ud t]< \infty$, we have
\begin{equation}\label{eq:approxNash}
 J^{n, i}(\bar\alpha^{n,i},\ldots, \bar\alpha^{n, i-1}, \beta^i, \bar\alpha^{n, i+1}, \ldots, \bar\alpha^{n,n}) \geq    
 J^{n, i}(\bar\alpha^{n, 1}, \ldots, \bar\alpha^{n,n}) - \varepsilon_n.
\end{equation}
\end{thm}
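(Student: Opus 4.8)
The plan is to follow the probabilistic route of \cite{carmona2013probabilistic} from the mean-field optimizer to an approximate Nash equilibrium, but to replace the Wasserstein propagation-of-chaos estimate used there by the dimensionality-free GMMD estimate of Theorem~\ref{thm:MV-Expectation}; this is exactly what prevents $\varepsilon_n$ from depending on $d$. Three comparisons are needed: (i) along the candidate profile $\bar\alpha^n$ of \eqref{def_approxNash}, the empirical measure $\bar\nu_t^n$ stays close to the mean-field flow $\mu_t$ in $\dF$; (ii) the cost $J^{n,i}(\bar\alpha^n)$ is within $\varepsilon_n$ of the optimal mean-field cost $J$ of \eqref{def:mfgJ}; (iii) for any unilateral deviation $\beta^i$, $J^{n,i}(\dots,\beta^i,\dots)\ge J-\varepsilon_n$. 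Chaining (ii) and (iii) yields \eqref{eq:approxNash}.

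First I would check that the closed-loop system \eqref{eq:Xt} is an instance of the McKean--Vlasov $n$-particle system of Section~\ref{sec:mvsde}: the effective drift $(t,x,\nu)\mapsto b_0(t,\nu)+b_1(t)x+b_2(t)\hat\alpha(t,x,\mu_t,u(t,x))$ is Lipschitz in $x$ uniformly in $t$ (Remark~\ref{rem:alphahat} and the Lipschitz/linear-growth bounds on $u$ in Theorem~\ref{thm:FBSDE}), Lipschitz in $\nu$ with respect to $\dF$ (since $|b_0(t,\nu)-b_0(t,\nu')|\le K\dF(\nu,\nu')$), and has the required growth at $(0,\delta_0)$, while $\sigma$ is constant; hence Assumption~\ref{assu_MV} holds. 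Theorem~\ref{thm:MV-Expectation} then gives $\EE[\sup_{0\le t\le T}\dF^2(\bar\nu_t^n,\mu_t)]\le C\log n/n$ and $\EE[\sup_t\|X_t^i\|^2]\le C$, the logarithm being removable for the classes of Section~\ref{sec:testfunction} by Remark~\ref{rem:log}. For (ii) I would couple each $X^i$ with an independent copy $\tilde X^i$ of the mean-field state (same $W^i$, same initial datum, drift frozen at $\mu_t$), use Gronwall and the above bound to get $\EE[\sup_t\|X_t^i-\tilde X_t^i\|^2]\le C\log n/n$, and then insert these closeness and moment bounds into the Lipschitz estimate \eqref{eq:fglip} for $(f,g)$ (Cauchy--Schwarz absorbing the linear-growth prefactor, using also $\|\hat\alpha(t,x,\mu_t,u(t,x))\|\le C(1+\|x\|)$) to obtain $|J^{n,i}(\bar\alpha^n)-J|\le C\sqrt{\log n/n}$.

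For (iii), fix $i$ and a progressively measurable $\beta^i$ with $\EE\int_0^T\|\beta_t^i\|^2\,\ud t<\infty$, and let $(\hat X^1,\dots,\hat X^n)$ be the states when player $i$ uses $\beta^i$ and the others keep $\bar\alpha^{n,j}$. I would first reduce to $\EE\int_0^T\|\beta_t^i\|^2\,\ud t\le M$ for a data-dependent $M$: by the coercivity $f(t,x,\mu,\alpha)\ge f(t,x,\mu,0)+\average{\alpha,\partial_\alpha f(t,x,\mu,0)}+\lambda\|\alpha\|^2$ and local boundedness of $\partial_\alpha f$, if $\EE\int_0^T\|\beta_t^i\|^2\,\ud t$ is too large then $J^{n,i}(\dots,\beta^i,\dots)\ge J$ outright. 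Under this bound, standard SDE estimates give $\EE[\sup_t\|\hat X_t^i\|^2]\le C$, and since only player $i$ deviates and the coupling is through the empirical measure, a Gronwall argument using $\dF(\hat\nu_t^n,\bar\nu_t^n)\le\frac{A_1}{n}\sum_k\|\hat X_t^k-X_t^k\|$ yields $\EE[\sup_t\frac{1}{n}\sum_{j\neq i}\|\hat X_t^j-X_t^j\|^2]\le C/n^2$, hence $\EE[\sup_t\dF^2(\hat\nu_t^n,\mu_t)]\le C\log n/n$. Comparing $J^{n,i}(\dots,\beta^i,\dots)$ with the value $\tilde J^i(\beta^i)$ of the mean-field control problem in which player $i$ runs $\beta^i$ against the frozen flow $\mu_t$ (coupling $\hat X^i$ with its frozen-flow analogue $\bar X^i$ gives $\EE[\sup_t\|\hat X_t^i-\bar X_t^i\|^2]\le C\log n/n$, and \eqref{eq:fglip} again gives $|J^{n,i}(\dots,\beta^i,\dots)-\tilde J^i(\beta^i)|\le C\sqrt{\log n/n}$), and noting that by Theorem~\ref{thm:FBSDE} the flow $\mu_t$ is the self-consistent fixed point so that $\tilde J^i(\beta^i)\ge J$, we conclude $J^{n,i}(\dots,\beta^i,\dots)\ge J-C\sqrt{\log n/n}\ge J^{n,i}(\bar\alpha^n)-C\sqrt{\log n/n}$, {\it i.e.}, \eqref{eq:approxNash} with $\varepsilon_n=C\sqrt{\log n/n}$, which is $C/\sqrt n$ for the test-function classes of Section~\ref{sec:testfunction}.

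The main obstacle is step (iii): one must produce moment bounds on the deviated trajectories that are \emph{uniform over all admissible deviations} $\beta^i$ (hence the preliminary coercivity reduction), and one must show that a single player's deviation perturbs the empirical measure by only $O(1/\sqrt n)$ in $\dF$, so that the $\dF$-Lipschitz estimate \eqref{eq:fglip} transfers all the $n$-player costs to mean-field costs without reintroducing the curse of dimensionality. Everything else is Gronwall, Cauchy--Schwarz, and bookkeeping, and the only reason the stated rate is $C/\sqrt n$ rather than $C\sqrt{\log n/n}$ is the log-removal of Remark~\ref{rem:log} for the concrete function classes.
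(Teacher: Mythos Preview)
Your overall architecture is the same as the paper's: couple the closed-loop $n$-player system with $n$ i.i.d.\ copies of the mean-field state to show $J^{n,i}(\bar\alpha^n)=J+O(\varepsilon_n)$, then show that a unilateral deviation $\beta^i$ perturbs the empirical law by only $O(1/\sqrt n)$ in $\dF$, transfer the deviated cost to a mean-field cost against the frozen flow $\mu_t$, and use $J(\beta^i)\ge J$. The coercivity reduction you spell out (bounding $\EE\int\|\beta^i\|^2$ before estimating the deviated dynamics) is needed and is implicitly borrowed from \cite{carmona2013probabilistic} in the paper; your version and the paper's differ only in the Step~2 decomposition (the paper drops player~$1$ and passes through $\hat\nu_t^{n-1}$ and $\bar\mu_t^{n-1}$, you compare $\hat\nu_t^n$ to $\bar\nu_t^n$ directly), which are equivalent.

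The one substantive difference is how you get the propagation-of-chaos bound in step~(i). By invoking Theorem~\ref{thm:MV-Expectation} you pick up $\EE[\sup_t\dF^2(\bar\nu_t^n,\mu_t)]\le C\log n/n$, hence $\varepsilon_n=C\sqrt{\log n/n}$ in general, and you then appeal to Remark~\ref{rem:log} to remove the log only for the concrete classes of Section~\ref{sec:testfunction}. The paper avoids the log for \emph{any} $\Phi$ satisfying Assumption~\ref{function_class_assumption}: it never takes $\sup_t$ inside the expectation, and instead uses the pointwise estimate of Theorem~\ref{thm: iid}(b) on the i.i.d.\ copies $\bar X_t^i$ to get $\EE\,\dF^2(\mu_t,\bar\mu_t^n)\le C/n$ for each fixed $t$, then closes by a Gronwall loop on $\EE\,\dF^2(\mu_t,\bar\nu_t^n)$. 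Since the cost comparison via \eqref{eq:fglip} only needs $\int_0^T\EE\,\dF^2(\cdot)\,\ud t$ and $\EE\,\dF^2(\cdot)\big|_{t=T}$, this suffices and yields the clean $\varepsilon_n=C/\sqrt n$ stated in the theorem. Your route is correct but proves a slightly weaker statement; replacing Theorem~\ref{thm:MV-Expectation} by the pointwise Theorem~\ref{thm: iid} argument closes this gap.

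One small slip: in step~(iii) the Gronwall on $\sum_{j\ne i}\|\hat X_t^j-X_t^j\|^2$, driven by $\frac{1}{n}\|\hat X_t^i-X_t^i\|^2$, gives $\EE\bigl[\sup_t\frac{1}{n}\sum_{j\ne i}\|\hat X_t^j-X_t^j\|^2\bigr]\le C/n$, not $C/n^2$. This is harmless, since $C/n$ is exactly what is needed for $\EE\,\dF^2(\hat\nu_t^n,\bar\nu_t^n)\le C/n$, and the rest of your argument goes through unchanged.
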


\begin{proof}
We first claim that the SDE \eqref{eq:Xt} is well defined, by the Lipschitz property and linear growth of $\hat\alpha$ in $(x,y)$ and $u$ in $x$ ({\it cf.} Remark~\ref{rem:alphahat} and \eqref{eq:u}). With Assumption~\ref{assu_MFG} \eqref{assu_b}, we also have
\begin{equation}\label{eq:Xalphabound}
    \sup_{n \geq 1}\max_{1 \leq i \leq n}\left[\EE[\sup_{0 \leq t \leq T}\abs{X_t^i}^2] + \EE\int_0^T \abs{\bar \alpha_t^{n,i}}^2 \ud t\right] \le C.
\end{equation}
The proof of \eqref{eq:approxNash} consists of two steps, and by symmetry we only need to prove it for $i = 1$.

\medskip
\noindent\textbf{Step 1: MFG \emph{vs.} $N$-player game using $(\bar\alpha^{n,1}, \ldots, \bar\alpha^{n,n})$.}
To this end, we introduce $n$ independent copies of the mean-field states $X_t$ in \eqref{def:FBSDE}:
\begin{equation}
    \ud \barXti = b(t, \barXti, \mu_t, \hat\alpha(t, \barXti, \mu_t, u(t, \barXti) ) \ud t + \sigma \ud W_t^i, \quad t \in [0,T], \text{ and } i \in \mc{I}.
\end{equation}
Note that $\MCL({\barXti}) = \mu_t$, and we have similar estimates for $(\barXti, \hat\alpha_t^i)$ as in \eqref{eq:Xalphabound}. 
Let  $\bar\mu_t^n$ be the empirical measure of $\barXti$ and define,
\begin{equation}
    \hat \alpha_t^i = \hat\alpha(t, \barXti, \mu_t, u(t, \barXti)),
\end{equation}
we then compute, by the regularity of $b$, $u$ and $\hat\alpha$, that for $t \in [0,T]$:
\begin{align*}
\EE\left[\sup_{0 \leq s \leq t} \abs{X_s^i - \barXsi}^2 \right]  &\leq \EE\left[\int_0^t \abs{b(s, X_s^i, \bar\nu_s^n, \bar\alpha_s^{n,i}) - b(s, \barXsi, \mu_s, \hat\alpha_s^i)}^2  \ud s \right] \\
   &\leq C \EE\left[\int_0^t \abs{X_s^i - \barXsi}^2 +  \dF^2(\bar\nu_s^n, \mu_s) + \abs{\bar\alpha_s^{n,i} - \hat\alpha_s^i}^2 \ud s \right] \\
   & \leq C \EE\left[\int_0^t \abs{X_s^i - \barXsi}^2 +  \dF^2(\bar \nu_s^n, \mu_s) \ud s \right]. 
\end{align*}
Then Gronwall's inequality gives
\begin{equation}
   \EE\left[\sup_{0 \leq s \leq t} \abs{X_s^i - \barXsi}^2 \right]  \leq   C \EE\left[\int_0^t   \dF^2(\bar \nu_s^n, \mu_s) \ud s \right], \forall i \in \mc{I} \text{ and } t \in [0,T].
\end{equation}
A similar proof as in Theorem~\ref{thm: iid} gives
\begin{equation}\label{eq:dis_measure_barXt_Xt}
\EE   \dF^2(\mu_t, \bar\mu_t^n) \leq \frac{C}{n}, \quad \forall t \in [0,T],
\end{equation}
and by the definition of $\dF$ we have
\begin{equation}
   \dF^2(\bar\nu_t^n, \bar \mu_t^n)  \leq \frac{A_1^2}{n} \sum_{i=1}^n \abs{X_t^i - \barXti}^2, \quad \forall t \in [0,T].
\end{equation}
Thus one deduces
\begin{align}
\EE [  \dF^2(\mu_t, \bar\nu_t^n)] &\leq 2\EE [   \dF^2(\mu_t, \bar\mu_t^n)] + 2\EE [   \dF^2(\bar\mu_t^n, \bar\nu_t^n)]\\
& \leq \frac{C}{n} + C\EE\left[ \frac{1}{n} \sum_{i=1}^n \abs{X_t^i - \barXti}^2\right]  \leq  \frac{C}{n}  + C \EE\left[\int_0^t  \dF^2(\bar \nu_s^n, \mu_s) \ud s \right].
\end{align}
Applying Gronwall's inequality again yields
\begin{equation}\label{eq:dis_measure}
    \EE [  \dF^2(\mu_t, \bar\nu_t^n)] \leq \frac{C}{n}, \quad \forall t \in [0,T],
\end{equation}
and consequently 
\begin{equation}\label{eq:dis_state}
      \EE\left[\sup_{0 \leq s \leq t} \abs{X_s^i - \barXsi}^2 \right] \leq \frac{C}{n}, \quad \forall i \in \mc{I}, \quad \forall t \in [0,T].
\end{equation}
We are now ready to compare $J^{n,i}(\bar\alpha^{n,1}, \ldots, \bar\alpha^{n,n})$ with the mean-field problem value $J$ defined in \eqref{def:mfgJ}, which coincide with $\EE[\int_0^T f(t, \barXti, \mu_t, \hat\alpha_t^i) \ud t + g(\barXTi, \mu_T)]$, as $\barXti \stackrel{\mc{D}}{=} X_t$. By Assumption~\ref{assu_MFG} \eqref{assu_fg}, the Cauchy-Schwarz inequality, the boundedness of $(X_t^i, \barXti, \bar\alpha_t^{n,i}, \hat\alpha_t^i)$ in expectation ({\it cf.} Theorem~\ref{thm:FBSDE} and estimate \eqref{eq:Xalphabound}), the Lipschitz property of $\hat\alpha$ and $u$, we have
\begin{align}
    \abs{J - J^{n,i}(\bar\alpha^{n,1}, \ldots, \bar\alpha^{n,n})} &\leq C \EE\left[\abs{\barXTi-X_T^i}^2 + \dF^2(\mu_T, \bar \nu_T^n)\right]^{1/2} \\
    &\quad + C \left(\int_0^T \EE\left[\abs{\barXti-X_t^i}^2 + \dF^2(\mu_t, \bar \nu_t^n) \ud t \right]\right)^{1/2},
\end{align}
and then conclude
\begin{equation}\label{eq:mfgvsalpha}
    J^{n,i}(\bar\alpha^{n,1}, \ldots, \bar\alpha^{n,n}) = J + C/\sqrt n,
\end{equation} 
by the estimates \eqref{eq:dis_measure} and \eqref{eq:dis_state}.  This suggest that, in order to prove \eqref{eq:approxNash}, we only need to compare $J^{n,i}(\beta^1,  \bar\alpha^{n,2}, \ldots, \bar\alpha^{n,n})$ with $J$.

\medskip
\noindent\textbf{Step 2: MFG \emph{vs. }$N$-player game using $(\beta^1, \bar\alpha^{n,2}, \ldots, \bar\alpha^{n,n})$.}
Denote by $(U_t^1, \ldots, U_t^n)$ the solution to \eqref{def:Xt} using strategy $(\beta^1, \bar\alpha^{n,2}, \ldots, \bar\alpha^{n,n})$, and $\hat\nu_t^n$ the empirical measure of $(U_t^1, U_t^2, \ldots, U_t^n)$, and $\hat\nu_t^{n-1}$ the empirical measure of $(U_t^2, \ldots, U_t^n)$. By the boundedness of $b_0$, $b_1$ and $b_2$, the admissiblity of $(\beta^1, \bar\alpha^{n,2}, \ldots, \bar\alpha^{n,n})$, and Gronwall's inequality, we have the following estimates:
\begin{align}\label{eq:boundsofU}
    &\EE[\sup_{0 \leq t \leq T} |U_t^1|^2] \leq C(1 + \EE\int_0^T |\beta_t^1|^2\ud t ), \quad \EE[\sup_{0 \leq t \leq T} |U_t^j|^2] \leq C, \quad j \in \mc{I} \setminus \{1\}, \\
    &\frac{1}{n}\sum_{j=1}^n  \EE[\sup_{0 \leq t \leq T} |U_t^j|^2]   \leq C(1 + \frac{1}{n}\EE\int_0^T |\beta_t^1|^2\ud t ).\label{eq:boundsofavgU}
\end{align}

\noindent\textbf{Step 2.1: Controlling $\dF(\hat\nu_t^n, \mu_t)$.}
By triangle inequality of the IPM, one has
\begin{equation}\label{eq:distanceUXbar}
    \EE[\dF^2(\hat\nu_t^n, \mu_t)] \leq C \left\{ \EE[\dF^2(\hat\nu_t^n, \hat\nu_t^{n-1})] +  \EE[\dF^2(\hat\nu_t^{n-1}, \bar\mu_t^{n-1})] +  \EE[\dF^2(\bar\mu_t^{n-1}, \mu_t)]\right\},
\end{equation}
and the last term is $\mc{O}(1/n)$ by \eqref{eq:dis_measure_barXt_Xt}. For the first term, we have
\begin{equation}
    \EE[\dF^2(\hat\nu_t^n, \hat\nu_t^{n-1})] \leq \frac{C}{n(n-1)} \sum_{j=2}^n \EE[|U_t^1 - U_t^j|^2],
\end{equation}
and is $\mc{O}(1/n)$ using the estimate \eqref{eq:boundsofU}.
By definition, the second term in \eqref{eq:distanceUXbar} is bounded by
\begin{equation}
    \EE[\dF^2(\hat\nu_t^{n-1}, \bar\mu_t^{n-1})] \leq \frac{C}{n-1}\sum_{j=2}^n \EE[|U_t^j - \bar X_t^j|^2].
\end{equation}
For $2 \leq j \leq n$, we deduce that
\begin{equation}
    \EE[|U_t^j - \bar X_t^j|^2] \leq  2\EE[|U_t^j -  X_t^j|^2] +  2\EE[|X_t^j - \bar X_t^j|^2] \leq \frac{C}{n},
\end{equation}
by \eqref{eq:dis_state}, the estimates (following the derivation of (58) in \cite{carmona2013probabilistic}):
\begin{align}
  \sup_{0 \leq t \leq T} \EE[|U_t^i - X_t^i|^2]  \leq \frac{C}{n}\EE\int_0^T |\beta_t^1 - \bar\alpha_t^{n,i}|^2\ud t, \quad 2 \leq i \leq n,
\end{align}
and boundedness of moments of $\beta^1$ and $\bar\alpha^{n,i}$.

\medskip
\noindent\textbf{Step 2.2: MFG using $\beta^1$ \emph{vs.} N-player game using $(\beta^1, \bar\alpha^{n,2}, \ldots, \bar\alpha^{n,n})$.}
To compare $J^{n,1}(\beta^1, \bar\alpha^{n,2}, \ldots, \bar\alpha^{n,n})$ with the mean field cost $J$ given in \eqref{def:mfgJ}, we define the process $\bar U_t^1$ associated with the mean-field flow $\mu_t = \MCL({X_t})$ and strategy $\beta^1$:
\begin{equation}
    \ud \bar U_t^1 = b(t, \bar U_t^1, \mu_t, \beta_t^1) \ud t + \sigma \ud W_t^1, \quad 0 \leq t \leq T.
\end{equation}
Comparing it with $U_t^1$, and using the boundedness of $b_1$, Assumption~\ref{assu_MFG} \eqref{assu_b}, the estimate of $\EE \dF^2(\hat\nu_t^n, \mu_t)$ and Gronwall's inequality, we deduce
\begin{equation}
    \sup_{0 \leq t \leq T} \EE[|U_t^1 - \bar U_t^1|^2 ] \leq \frac{C}{n}.
\end{equation}
Therefore, a similar derivation as in Step 1 gives (replacing $X$ by $U$ and $\bar \nu$ by $\hat \nu$)
\begin{equation}
    \abs{J(\beta^1) - J^{n,1}(\beta^1, \bar\alpha^{2,n}, \ldots, \bar\alpha^{n,n})} \leq \frac{C}{\sqrt n},
\end{equation}
where $J(\beta^1)$ is the mean-field cost using $\beta^1$:
\begin{equation}
    J(\beta^1) = \EE\left[g(\bar U_T^1, \mu_T) + \int_0^T f(t, \bar U_t^1, \mu_t, \beta_t^1) \ud t\right].
\end{equation}
As $J$ is the optimal cost of the mean-field game, any strategy $\beta^1$ other than $\hat\alpha(t, X_t, \mu_t, u(t, X_t))$ will produce a higher cost, {\it i.e.}, $J(\beta^1) \geq J$. Therefore, one has

\begin{equation}\label{eq:mfgvsbeta}
  J^{n,i}(\beta^1, \bar\alpha^{2,n}, \ldots, \bar\alpha^{n,n})  \geq J(\beta^1) - \frac{C}{\sqrt n} \geq J -\frac{C}{\sqrt n}.
\end{equation}
Combining \eqref{eq:mfgvsalpha} and \eqref{eq:mfgvsbeta} gives the desired result \eqref{def_approxNash}.
\end{proof}

\section{Conclusion}
A new class of metrics, in the form of integral probability metrics, is proposed in this paper to study the convergence of empirical measures in high-dimensional spaces. We generalize the standard definition of maximum mean discrepancy by imposing specific criteria for selecting the test function space to guarantee the property of being free of the CoD. Examples of test function spaces include reproducing kernel Hilbert spaces, Barron function space, and flow-induced function spaces. Under the proposed metrics, we can show the following three cases of convergence are dimension-free: 1. The convergence of empirical measure drawn from a given distribution; 2. The convergence of $n$-particle system to the solution to McKean-Vlasov stochastic equation; 3. The construction of an $\varepsilon$-Nash equilibrium for a homogeneous $n$-player game by its mean-field limit. We also generalize the results in \cite{yang2020generalization,yang2022generalization} and show that, given a distribution close to the target distribution measured by the newly proposed metric and a certain representation of the target distribution, we can generate a distribution close to the target one in terms of the Wasserstein metric and relative entropy.

As future work, we shall deepen the study of our metric by investigating the mean-field limit of the $n$-player stochastic differential games in high dimensions, whose Nash equilibria can be given by the deep fictitious theory and algorithms \cite{Hu2:19,HaHu:19,han2020convergence,Xuanetal:21}.
Besides, we are interested in developing a similar theory ({\it cf.} Theorem~\ref{thm:regularity_improving}) for models other than the bias potential type and density type, for instance, the generative adversarial network models, which are useful and important in the machine learning community. We also plan to apply Theorem \ref{thm:regularity_improving} to the solution of the McKean-Vlasov SDE \eqref{MV-SDE} to study whether one can construct a distribution based on the solution of the $n$-particle system \eqref{n-body SDE}, which is close to the distribution of the solution of McKean-Vlasov SDE \eqref{MV-SDE} in the sense of the Wasserstein metric, total variation distance, or relative entropy. The technical difficulty therein is to analyze when the distribution of \eqref{MV-SDE} satisfies a bias potential model or density model. Finally, it is of interest to apply this class of metrics to analyze the convergence rates in other problems, for example, McKean-Vlasov models, MFGs, and stochastic gradient descent for two-layer neural networks or multi-layer neural networks \cite{chizat2018global,mei2018mean,sirignano2020mean}.

\bibliographystyle{plain}
\bibliography{Reference}

\appendix
\section{Proof of Theorem~\ref{thm: prior_estimation}}\label{app:prior_estimation}

The first part of claim (a) is from Theorem 3.2.2 in \cite{zhang2017backward}. Claim (b) follows from Theorem 5.1 and 5.5 in \cite{delarue2020master}. 

Below we prove the second part of claim (a). We denote by $C$ a generic constant that only depends on $K$ and $T$,  whose
value may change from line to line when there is no need to distinguish.
We first observe
\begin{align}
    \int_{C([0,T];\bR^d)}[\Delta(x,h)]^2\rmd \mu(x) &= \EE \left[\sup_{s,t \in [0,T], |s-t|\le h}\Bigl\|\int_{s}^t B(u,X_u)\rmd u + \int_{s}^t\Sigma(u,X_u)\rmd W_u\Bigr\|^2\right] \notag \\
    &\le Ch^2 \EE\left[1 + \sup_{0\le t \le T}\|X_t\|^2\right] + C\EE \sup_{s,t\in[0,T],|s-t|\le h}\Bigl\|\int_{s}^t\Sigma(u,X_u)\rmd W_u\Bigr\|^2 \notag \\
    \label{error_decomposition}
    & \le Ch^2 + C\EE \sup_{s,t\in[0,T],|s-t|\le h}\Bigl\|\int_{s}^t\Sigma(u,X_u)\rmd W_u\Bigr\|^2.
\end{align}
Thus, it suffices to estimate the second term in \eqref{error_decomposition}. In the sequel, we will use short notations $\Sigma_t := \Sigma(t,X_t)$ and $Y_t := \int_{0}^t\Sigma_u\rmd W_u$. 

We first work on the case $\eta = \delta_{x_0}$ for a fixed $x_0 \in \bR^d$. Fixing $s \in [0,T]$, one has
\begin{equation}
    \rmd \|Y_t - Y_s\|^2 = 2(Y_t - Y_s)\transpose\Sigma_t \rmd W_t + \|\Sigma_t\|_F^2 \rmd t.
\end{equation}
Hence, for any $\lambda > 0$, 
    $\exp\left(\lambda[\|Y_t - Y_s\|^2 - \int_{s}^{t}\|\Sigma_u\|_F^2\rmd u] - 2\lambda^2 \int_{s}^{t}\|(Y_u -Y_s)\transpose\Sigma_u\|^2\rmd u\right)$
is a nonnegative local martingale for $t \in [s,T]$, thus a supermartingale. 
Fix $a > 0$ and let $\tau$ be a stopping time defined by
\begin{equation}
    \tau = \inf\{u \in [s,t]: \|Y_u - Y_s\| \ge a\} \wedge t, \quad \inf\{\emptyset\} = +\infty.
\end{equation}
Then
   $\EE\left[\exp\left(\lambda[\|Y_\tau - Y_s\|^2 - \int_{s}^{\tau}\|\Sigma_u\|_F^2\rmd u] - 2\lambda^2 \int_{s}^{\tau}\|(Y_u -Y_s)\transpose\Sigma_u\|^2\rmd u\right)\right] \le 1.$
Noticing that for any $u \in [s,\tau]$, we have  $\|Y_u - Y_s\| \le a$, and 
\begin{equation}
    \int_{s}^{\tau}\|(Y_u -Y_s)\transpose\Sigma_u\|^2\rmd u \le a^2 \int_{s}^{\tau}\|\Sigma_u\|_F^2\rmd u \le Ca^2(1 + \sup_{0\le u \le T}\|X_u\|^2)(t-s).
\end{equation}
Consequently, 
\begin{equation}
    \EE[\exp(\lambda[\|Y_\tau - Y_s\|^2 - C(1 + \sup_{0\le u \le T}\|X_u\|^2)(t-s)] - C\lambda^2a^2(1 + \sup_{0\le u \le T}\|X_u\|^2)(t-s)] \le 1.
\end{equation}
Now, let $S_X$ be the maximum of $X_t$ on $[0,T]$, {\it i.e.}, $S_X := \sup_{0\le u \le T}\|X_u\|$. 
For a fixed constant $M > 0$, one deduces
\begin{equation}
    \EE[\exp(\lambda\|Y_\tau - Y_s\|^2)\mathbf{1}_{S_X\le M}] \le \exp(C\lambda(1+M^2)(t-s) + C\lambda^2 a^2(1+M^2)(t-s)).
\end{equation}
Hence,
\begin{align}
    \PP(\|Y_t - Y_s\| \ge a, S_X \le M) &\le \PP(\|Y_\tau - Y_s\|\ge a, S_X \le M) \\&\le \exp(-\lambda a^2 +C\lambda(1+M^2)(t-s) + C\lambda^2 a^2(1+M^2)(t-s)).
\end{align}
Picking $\lambda = [2C(1+M^2)(t-s)]^{-1}$, we know that 
\begin{equation}
    \PP(\|Y_t - Y_s\| \ge a, S_X \le M) \le C\exp\left(-\frac{a^2}{C(1+M^2)(t-s)}\right), \quad \forall s, t \in [0,T].
\end{equation}
Therefore, for any $(t_1,s_1),\dots,(t_m,s_m) \in [0,T]\times [0,T]$, using Exercise 2.5.10 and Proposition 2.5.2 in \cite{vershynin2018high}, we obtain
\begin{equation}\label{sde_max_control}
   \EE\left[\max_{1\le i \le m}\|Y_{t_i} - Y_{s_i}\|^4 \mathbf{1}_{S_X\le M}\right] \le C\log^2(m)(1 + M^4) \max_{1\le i \le m}|t_i-s_i|^2.
\end{equation}
For any $t \in [0,T]$ and any integer $k$, define
\begin{equation}
    \Pi_k(t) = h2^{-k}\lfloor \frac{2^k t}{h}\rfloor.
\end{equation}
The continuity of $Y_t$ together with inequality \eqref{sde_max_control} gives
\begin{align}
    \left(\EE\Bigl[\sup_{s,t \in [0,T],|s-t|\le h}\|Y_t - Y_s\|^4 \mathbf{1}_{S_X \le M}\Bigr]\right)^{\frac{1}{4}} 
    &\le C \left(\EE\Bigl[\sup_{t \in [0,T]}\|Y_t - Y_{\Pi_0(t)}\|^4 \mathbf{1}_{S_X\le M}\Bigr]\right)^{\frac{1}{4}} \\
    &\le C\sum_{k = 0}^{\infty} \left(\EE\Bigl[\sup_{t \in [0,T]}\|Y_{\Pi_{k+1}(t)} - Y_{\Pi_k(t)}\|^4\mathbf{1}_{S_X \le M}\Bigr]\right)^{\frac{1}{4}} \\
    &\le C(1 + M)\sum_{k=0}^{\infty}\left(\frac{h^2}{4^k}\log^2\Bigl(\frac{2^{k+2}T}{h}\Bigr)\right)^{\frac{1}{4}}\\
    &\le C(1 + M)\sqrt{h\log\left(\frac{2T}{h}\right)}.
\end{align}
Then by the Cauchy-Schwarz inequality, one has
\begin{align}
    \left(\EE\Bigl[\sup_{s,t \in [0,T],|s-t|\le h}\|Y_t - Y_s\|^2\Bigr]\right)^{\frac{1}{2}} &\le \sum_{k=1}^{+\infty}\left(\EE\Bigl[\sup_{s,t \in [0,T],|s-t|\le h}\|Y_t - Y_s\|^2\mathrm{1}_{k-1\le S_X\le k}\Bigr]\right)^{\frac{1}{2}} \\
    &\le \sum_{k=1}^{+\infty}\left(\EE\Bigl[\sup_{s,t \in [0,T],|s-t|\le h}\|Y_t - Y_s\|^4\mathrm{1}_{S_X \le k}\Bigr] \PP\Bigl(S_X \ge k-1\Bigr)\right)^{\frac{1}{4}} \\
    &\le C\sqrt{h\log\left(\frac{2T}{h}\right)}\sum_{k=1}^{+\infty}  (k+1)\PP^{\frac{1}{4}}(S_X \ge k-1).
\end{align}
Using claim (b) and Theorem 5.1 in \cite{delarue2020master}, we know that 
\begin{equation}
    \PP(\sup_{0\le u \le T}\|X_u\| \ge n)  \le 2\exp\left(-\frac{n^2}{C(1+\|x_0\|^2)}\right).
\end{equation}
Therefore,
\begin{align}
  \sum_{k=1}^{+\infty}  (k+1)\PP^{\frac{1}{4}}(S_X \ge k-1) &\le C + C\sum_{k=1}^{+\infty}(k+2)\exp\left(-\frac{k^2}{C(1+\|x_0\|^2)}\right) \\
  &\le C\Bigl[1 + \int_{0}^{+\infty}(a+2)\exp\left(-\frac{a^2}{C(1+\|x_0\|^2)}\right)\rmd a\Bigr] \le C(1+\|x_0\|),
\end{align}
which means
\begin{equation}
    \left(\EE\Bigl[\sup_{s,t \in [0,T],|s-t|\le h}\|Y_t - Y_s\|^2\Bigr]\right)^{\frac{1}{2}} \le C(1+\|x_0\|)\sqrt{h\log\left(\frac{2T}{h}\right)}.
\end{equation}
For general $\eta$, we use the above inequality to deduce
\begin{align}
    \EE\Bigl[\sup_{s,t \in [0,T],|s-t|\le h}\|Y_t - Y_s\|^2\Bigr] &\le \EE\left(\EE\Bigl[\sup_{s,t \in [0,T],|s-t|\le h}\|Y_t - Y_s\|^2\;\Bigl\vert\;X_0 = x_0\Bigr]\right) \\&\le C(1+ \EE\|\eta\|^2)h \log\left(\frac{2T}{h}\right)\le Ch\log\left(\frac{2T}{h}\right).
\end{align}
With inequality \eqref{error_decomposition}, we obtain the desired result.

\section{Discussion on Theorem~\ref{thm: process}}\label{app:functionclass}
We show in this section that, under a slightly stronger condition \eqref{assump:modulus} compared to \eqref{eq:module}, the  logarithm term in $\phi(n)$ defined in Theorem \ref{thm: process} can be removed, for all the examples of the test function classes discussed in Section \ref{sec:testfunction}.

Throughout this appendix, we assume $\mu$ to be a distribution on $C([0,T];\bR^d)$ such that
\begin{equation}
     \int_{C([0,T];\bR^d)}\sup_{0\le t \le T}\|x_t\|^2\rmd \mu(x) < +\infty,
\end{equation}
and
\begin{equation}\label{assump:modulus}
     \int_{C([0,T];\bR^d)}[\Delta(x,h)]^2\rmd \mu(x) \le Q h^\alpha \log^\beta\left(\frac{2T}{h}\right),
\end{equation}
for any $h > 0$, where $Q,\alpha,\beta > 0$ are positive constants. Still, we will use $C$ to denote a positive constant depending only on $\alpha$ and $\beta$, which may vary from line to line.

The first result is established for the reproducing kernel Hilbert spaces (RKHSs).
\begin{prop}\label{rkhs_without_log}
Assume the kernel k satisfies the condition (a) in Theorem \ref{thm: rkhs i.i.d.} and $\Phi$ is the unit ball of $\mathcal{H}_k$, then
\begin{equation}
   \EE \sup_{t\in[0,T]} \rmD_\Phi(\mu_t,\bar{\mu}_t^n)\le 2\sqrt{\frac{2}{n}\Big[K_1^2\int_{C([0,T];\bR^d)}\sup_{0\le t \le T}\|x_t\|^2\rmd\mu(x) + K_2^2\Big]} + CK_1\sqrt{\frac{QT^\alpha}{n}}.
\end{equation}
\end{prop}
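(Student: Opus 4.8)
The plan is to combine the exact RKHS evaluation of the Rademacher-type supremum used in the proof of Theorem~\ref{thm: rkhs i.i.d.} with a \emph{dyadic} chaining argument in $\mathcal{H}_k$. The single-scale discretization in Theorem~\ref{thm: process} pays a $\sqrt{\log n'}$ factor once one balances $n' \sim n^{1/(2\alpha)}$, whereas a multi-scale chaining pays only $\sqrt{k}$ at dyadic level $k$, and condition~\eqref{assump:modulus} forces the level-$k$ contribution to decay like $2^{-k\alpha/2}$, so the resulting series converges and no logarithm of $n$ survives.

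First I would run the symmetrization step exactly as in the proofs of Theorems~\ref{thm: iid} and \ref{thm: process}, reducing the claim to bounding $\frac{2}{n}\EE\sup_{t\in[0,T]}\sup_{f\in\Phi}|\sum_{i=1}^n\xi_i f(X_t^i)|$ with $\xi_i$ i.i.d. Rademacher, independent of the processes. Writing $G^i_t := k(X_t^i,\cdot)\in\mathcal{H}_k$ and $g_t := \sum_{i=1}^n\xi_i G^i_t$, the reproducing property gives $\sup_{f\in\Phi}|\sum_i\xi_i f(X_t^i)| = \|g_t\|_{\mathcal{H}_k}$, so it suffices to bound $\EE\sup_t\|g_t\|_{\mathcal{H}_k}$. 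Condition (a) of Theorem~\ref{thm: rkhs i.i.d.} gives $\|G^i_t - G^i_s\|_{\mathcal{H}_k} = \sqrt{k(X_t^i,X_t^i)+k(X_s^i,X_s^i)-2k(X_t^i,X_s^i)}\le K_1\|X_t^i - X_s^i\|$, so $t\mapsto g_t$ is continuous, and (as derived in that proof) $k(x,x)\le 2(K_1^2\|x\|^2 + K_2^2)$, yielding the starting-point estimate $\EE\|g_0\|_{\mathcal{H}_k}^2 = \sum_i\EE\,k(X_0^i,X_0^i)\le 2n(K_1^2 M + K_2^2)$ with $M := \int\sup_t\|x_t\|^2\rmd\mu(x)$.

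Next I would introduce the dyadic partitions $\Pi_k(t) := T 2^{-k}\lfloor 2^k t/T\rfloor$ and telescope $g_t - g_0 = \sum_{k\ge 0}(g_{\Pi_{k+1}(t)} - g_{\Pi_k(t)})$ (valid by continuity of $g$), so that $\sup_t\|g_t\|_{\mathcal{H}_k}\le \|g_0\|_{\mathcal{H}_k} + \sum_{k\ge0}\sup_t\|g_{\Pi_{k+1}(t)} - g_{\Pi_k(t)}\|_{\mathcal{H}_k}$. At level $k$, as $t$ varies there are at most $2^{k+2}$ pairs $(\Pi_{k+1}(t),\Pi_k(t))=:(s,s')$, each with $|s-s'|\le T2^{-k}$. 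Conditionally on the processes, $\xi\mapsto\|\sum_i\xi_i(G^i_s - G^i_{s'})\|_{\mathcal{H}_k}$ has bounded coordinate differences $\le 2K_1\Delta(X^i,T2^{-k})$ (uniform over the pairs) and, by orthogonality of the $\xi_i$ in $L^2$, mean $\le K_1\big(\sum_i\Delta(X^i,T2^{-k})^2\big)^{1/2}$; applying the bounded-difference concentration inequality (Theorem~3.18 in \cite{van2014probability}, as in Theorem~\ref{thm: process}) and a union bound over the $\le 2^{k+2}$ pairs, then integrating the tail, gives $\EE_\xi\sup_t\|g_{\Pi_{k+1}(t)} - g_{\Pi_k(t)}\|_{\mathcal{H}_k}\le C K_1\sqrt{k+1}\,\big(\sum_i\Delta(X^i,T2^{-k})^2\big)^{1/2}$. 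Taking expectation over the processes and using \eqref{assump:modulus} with $h = T 2^{-k}$ (so $2T/h = 2^{k+1}$) bounds this by $C K_1\sqrt{k+1}\,\sqrt{nQT^\alpha}\,2^{-k\alpha/2}\big((k+1)\log 2\big)^{\beta/2}$. Since $\alpha>0$, the series $\sum_{k\ge0} 2^{-k\alpha/2}(k+1)^{(1+\beta)/2}$ converges, so $\EE\sup_t\|g_t - g_0\|_{\mathcal{H}_k}\le C K_1\sqrt{nQT^\alpha}$ with $C$ depending only on $\alpha,\beta$.

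Finally, combining the two pieces gives $\EE\sup_t\|g_t\|_{\mathcal{H}_k}\le \sqrt{2n(K_1^2 M + K_2^2)} + C K_1\sqrt{nQT^\alpha}$, and multiplying by $2/n$ (after relabelling $C$) yields $\EE\sup_t\rmD_{\Phi}(\mu_t,\bar{\mu}_t^n)\le 2\sqrt{\frac{2}{n}(K_1^2 M + K_2^2)} + C K_1\sqrt{QT^\alpha/n}$, the asserted bound. The main obstacle is the third step: one must keep the Rademacher cancellation (the $L^2$ orthogonality is what supplies the extra $\sqrt n$ in the denominator — bounding increments by the triangle inequality would only give an $O(1)$ second term) \emph{while} handling the supremum over $t$ via a multi-scale union bound whose per-level cost is controlled precisely by the geometric decay built into \eqref{assump:modulus}; making these two mechanisms coexist is the crux.
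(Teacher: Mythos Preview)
Your proposal is correct and follows essentially the same route as the paper: symmetrize, identify the RKHS supremum with $\|g_t\|_{\mathcal{H}_k}$, telescope along the dyadic partition $\Pi_k$, control each level by a sub-Gaussian maximal inequality over $O(2^k)$ pairs, and sum the convergent series $\sum_k 2^{-k\alpha/2}(k+1)^{(1+\beta)/2}$. The only difference is the concentration tool: you use the bounded-difference inequality (coordinate differences $\le 2K_1\Delta(X^i,T2^{-k})$), whereas the paper invokes the Hanson--Wright inequality for $\sqrt{\xi^{\mathrm T}S\xi}$; both yield sub-Gaussian tails with variance proxy $\mathrm{Trace}(S)=\sum_i\|G^i_s-G^i_{s'}\|_{\mathcal{H}_k}^2$ and hence the same per-level bound $CK_1\sqrt{(k+1)/n}\,\big(\EE\,\Delta(X^1,T2^{-k})^2\big)^{1/2}$.
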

\begin{proof}

 Let $X^1,\dots,X^n$ be i.i.d. processes drawn from the distribution $\mu$. Following \cite[Lemma 26.2]{shalev2014understanding}, we immediately have
 \begin{equation}
 \EE\sup_{t\in[0,T]}\rmD_\Phi(\mu_t,\bar{\mu}_t^n) \le \frac{2}{n}\EE\sup_{t \in [0,T]}\sup_{\|f\|_{\mathcal{H}_k} \le 1}\Big|\sum_{i=1}^n\xi_if(X_t^i)\Big|. 
 \end{equation}
For a fixed integer $n'\ge 2$ and any $t_p,s_p \in [0,T]$,  $p = 1,\dots,n'$, we first compute
\begin{align}
      &\frac{1}{n}\EE \max_{1\le p \le n'}\sup_{\|f\|_{\mathcal{H}_k} \le 1} \Big|\sum_{i=1}^n\xi_i [f(X^i_{t_p})-f(X^i_{s_p})]\Big|\notag \\
    =&\frac{1}{n}\EE \max_{1\le p \le n'}\sup_{\|f\|_{\mathcal{H}_k} \le 1} \Big|\langle f,\sum_{i=1}^n\xi_i (k(X_{t_p}^i,\cdot) - k(X_{s_p}^i,\cdot)\rangle_{\mathcal{H}_k}\Big| \notag \\
     \leq & \frac{1}{n}\EE\max_{1\le p \le n'} \sqrt{\sum_{i=1}^n\sum_{j=1}^n\xi_i\xi_j[k(X_{t_p}^i,X_{t_p}^j) + k(X_{s_p}^i,X_{s_p}^j)-k(X_{t_p}^i,X_{s_p}^j) - k(X_{s_p}^i,X_{t_p}^j)]}. \label{eq:rkhsapp:eq2}
 \end{align}
Let $S$  be a nonnegative-definite matrix, then there exists a nonnegative-definite $n\times n$ matrix $F$ such that $S = F\transpose F$. Let $\xi = (\xi_1,\dots,\xi_n)\transpose$, then there exists a universal constant $C > 0$ ({\it cf.}~\cite[Theorem~2.1]{rudelson2013hanson} and \cite[Example~2.5.8]{vershynin2018high}), such that for any $a \ge 0$,
 \begin{equation}
     \mathbb{P}(\sqrt{\xi\transpose S \xi} - \sqrt{\mathrm{Trace}(S)} \ge a) = \mathbb{P}(\|F\xi\| - \|F\|_{\mathrm{HS}} \ge a) \le 2\exp(-\frac{a^2}{C \mathrm{Trace}(S)}), \label{eq:rkhsapp:eq3}
 \end{equation}
 where $\|F\|_{\mathrm{HS}} = \sqrt{\mathrm{Trace}(F\transpose F)}$.
Therefore, for any positive definite matrices $S_1,\dots,S_{n'}$,  \cite[Exercise~2.5.10]{vershynin2018high} gives
\begin{equation}
    \EE \sup_{1 \le p \le n'} \sqrt{\xi\transpose S_p \xi} \le C\max_{1\le p \le n'}\sqrt{\mathrm{Trace}(S_p)\log n'}. 
\end{equation}
Since for any $a_1,a_2,\dots,a_n \in \bR$,
 \begin{align}
     &\sum_{i=1}^n\sum_{j=1}^na_ia_j[k(X_{t_p}^i,X_{t_p}^j) + k(X_{s_p}^i,X_{s_p}^j)-k(X_{t_p}^i,X_{s_p}^j) - k(X_{s_p}^i,X_{t_p}^j)]\notag \\
    =&\langle \sum_{i=1}^na_i[k(X_{t_p}^i,\cdot)-k(X_{s_p}^i,\cdot)], \sum_{i=1}^na_i[k(X_{t_p}^i,\cdot)-k(X_{s_p}^i,\cdot)]\rangle_{\mathcal{H}_k} \ge 0,
 \end{align}
using \eqref{eq:rkhsapp:eq2} and  \eqref{eq:rkhsapp:eq3} with $(K_p)_{i,j} = k(X_{t_p}^i,X_{t_p}^j) + k(X_{s_p}^i,X_{s_p}^j)-k(X_{t_p}^i,X_{s_p}^j) - k(X_{s_p}^i,X_{t_p}^j)$, we deduce
\begin{align}
     &\frac{1}{n}\EE \max_{1\le p \le n'}\sup_{\|f\|_{\mathcal{H}_k} \le 1} \Big|\sum_{i=1}^n\xi_i [f(X^i_{t_p})-f(X^i_{s_p})]\Big|\notag \\ 
   &\le \frac{C}{n}\sqrt{\log n'}\EE\max_{1\le p \le n'}\sqrt{\sum_{i=1}^n k(X_{t_p}^i,X_{t_p}^i) + k(X_{s_p}^i,X_{s_p}^i) - 2k(X_{t_p}^i,X_{s_p}^i)}\notag \\
     &\le C \sqrt{\frac{\log n'}{n}\EE\max_{1\le p \le n'}[k(X_{t_p}^1,X_{t_p}^1) + k(X_{s_p}^1,X_{s_p}^1) - 2k(X_{t_p}^1,X_{s_p}^1)]}\notag \\
     &\le  CK_1\sqrt{\frac{\log n'}{n}\EE \max_{1\le p\le n'}\|X_{t_p}^1 - X_{s_p}^1\|^2}, \label{eq:rkhsapp:eq1}
\end{align}
where $K_1$ is the constant defined in Theorem~\ref{thm: rkhs i.i.d.}.

We now use the chaining method to estimate the Rademacher complexity. For any $t \in [0,T]$ and integer $k$, we define
\begin{equation}
    \Pi_k(t) = T2^{-k}\lfloor\frac{2^k t}{T}\rfloor.
 \end{equation}
We first compute
\begin{align}
    &\Bigg|\frac{1}{n}\EE\sup_{t \in [0,T]}\sup_{\|f\|_{\mathcal{H}_k} \le 1}\Big|\sum_{i=1}^n\xi_if(X_t^i)\Big| - \frac{1}{n}\EE\sup_{t \in [0,T]}\sup_{\|f\|_{\mathcal{H}_k} \le 1}\Big|\sum_{i=1}^n\xi_i f(X_{\Pi(t)}^i)\Big|\Bigg| \notag \\
    \le&\frac{1}{n}\EE\sup_{t \in [0,T]}\sup_{\|f\|_{\mathcal{H}_k} \le 1}\Big|\sum_{i=1}^n \xi_i [f(X_t^i) - f(X_{\Pi(t)}^i)]\Big| \notag \\
     \le& K_1\EE \sup_{t \in [0,T]}\|X_t^1 - X_{\Pi(t)}^1\| \le CK_1 Q\left(\frac{T}{2^k}\right)^{\alpha/2} (k+1)^{\beta/2} \rightarrow 0,
\end{align} 
as $k \rightarrow +\infty$. We have also derived in Theorem~\ref{thm: rkhs i.i.d.} that
 \begin{equation}
     \frac{1}{n}\EE\sup_{\|f\|_{\mathcal{H}_k} \le 1}|\sum_{i=1}^n\xi_if(X_0^i)| \le \sqrt{\frac{2}{n}\Big[K_1^2\int_{C([0,T];\bR^d)}\|x_0\|^2\rmd\mu(x) + K_2^2\Big]}.
 \end{equation}
Together with \eqref{eq:rkhsapp:eq1}, we finally achieve
\begin{align}
 &\frac{1}{n}\EE\sup_{t \in [0,T]}\sup_{\|f\|_{\mathcal{H}_k} \le 1}\Big|\sum_{i=1}^n\xi_if(X_t^i)\Big| \notag \\
     \le& \frac{1}{n}\EE\sup_{\|f\|_{\mathcal{H}_k} \le 1}\Big|\sum_{i=1}^n\xi_if(X_0^i)\Big| + \sum_{k=0}^{+\infty}\frac{1}{n}\EE\sup_{t \in [0,T]}\sup_{\|f\|_{\mathcal{H}_k \le 1}}\Big|\sum_{i=1}^n\xi_i [f(X_{\Pi_{k+1}(t)}^i)- f(X_{\Pi_k(t)}^i)]\Big| \notag \\
   \le&\sqrt{\frac{2}{n}\Big[K_1^2\int_{C([0,T];\bR^d)}\|x_0\|^2\rmd\mu(x) + K_2^2\Big]} + CK_1\sum_{k=0}^{+\infty}\sqrt{\frac{\log (2^{k+1})}{n}\EE \sup_{t \in [0,T]}\|X_{\Pi_{k+1}(t)}^1 - X_{\Pi_k(t)}^1\|^2} \notag \\
    \le& \sqrt{\frac{2}{n}\Big[K_1^2\int_{C([0,T];\bR^d)}\sup_{0\le t \le T}\|x_t\|^2\rmd\mu(x) + K_2^2\Big]} + CK_1 \sum_{k=0}^{+\infty}\sqrt{\frac{k+1}{n}Q\left(\frac{T}{2^k}\right)^\alpha (k+1)^\beta} \notag \\
     \le&\sqrt{\frac{2}{n}\Big[K_1^2\int_{C([0,T];\bR^d)}\sup_{0\le t \le T}\|x_t\|^2\rmd\mu(x) + K_2^2\Big]} + CK_1\sqrt{\frac{QT^\alpha}{n}},
\end{align}
where we have used $\sum_{k=0}^{+\infty}\sqrt{\frac{(k+1)^{\beta+1}}{2^{\alpha k}}} < +\infty$.
\end{proof}

To establish results for the Barron space and flow-induced function spaces, we first present the following lemma.
\begin{lem}\label{lem_estimate_sup}
Let $X^1,\dots,X^n$ be i.i.d. processes drawn from $\mu \in \mc{P}^2(C([0,T]; \RR^d))$ and $\xi_1,\dots,\xi_n$ be i.i.d. Rademacher variables which are independent of $X^1,\dots,X^n$. Then,
\begin{equation}
    \frac{1}{n}\EE \sup_{0\le t \le T}\sqrt{\sum_{i=1}^n\sum_{j=1}^n\xi_i\xi_j 
    [(X_t^i)\transpose X_t^j+1]} \le \sqrt{\frac{2}{n}\int_{C([0,T];\bR^d)}\sup_{0\le t \le T}[\|x_t\|+1]^2\rmd\mu(x)} + C\sqrt{\frac{QT^\alpha}{n}}.
\end{equation}
\end{lem}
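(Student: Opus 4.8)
The plan is to mimic the chaining argument used for the RKHS case in Proposition~\ref{rkhs_without_log}, but now adapted to the ``linear kernel plus one'' function $k(x,x') = x\transpose x' + 1$, for which things are in fact simpler because this kernel is explicit. Set $g_t(\xi) := \sqrt{\sum_{i,j}\xi_i\xi_j[(X_t^i)\transpose X_t^j + 1]}$; note $g_t(\xi) = \|\sum_{i=1}^n \xi_i ((X_t^i)\transpose, 1)\transpose\|$, the norm in $\bR^{d+1}$ of the random vector $v_t := \sum_i \xi_i ((X_t^i)\transpose,1)\transpose$. First I would record the pointwise bound, for each fixed $t$,
\begin{equation}
    \frac{1}{n}\EE_\xi \|v_t\| \le \frac{1}{n}\sqrt{\EE_\xi \|v_t\|^2} = \frac{1}{n}\sqrt{\sum_{i=1}^n(\|X_t^i\|^2+1)},
\end{equation}
and more generally a sub-Gaussian increment estimate: for fixed $s,t$, $v_t - v_s = \sum_i \xi_i ((X_t^i - X_s^i)\transpose, 0)\transpose$ is, conditionally on the processes, a sub-Gaussian vector with ``variance proxy'' $\frac{1}{n^2}\sum_i \|X_t^i - X_s^i\|^2$, so by the Hanson--Wright / Euclidean-norm concentration (exactly \cite[Theorem~2.1]{rudelson2013hanson}, \cite[Example~2.5.8]{vershynin2018high}, as used in \eqref{eq:rkhsapp:eq3}) one gets
\begin{equation}
    \frac{1}{n}\EE \max_{1\le p \le n'}\|v_{t_p} - v_{s_p}\| \le \frac{C}{n}\sqrt{\log n'}\;\EE \max_{1\le p \le n'}\Bigl(\sum_{i=1}^n\|X_{t_p}^i - X_{s_p}^i\|^2\Bigr)^{1/2} \le C\sqrt{\frac{\log n'}{n}\,\EE\max_{1\le p \le n'}\|X_{t_p}^1 - X_{s_p}^1\|^2},
\end{equation}
using the i.i.d. structure to pass from a sum over $i$ to a single copy, just as in \eqref{eq:rkhsapp:eq1}.

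Next I would run the dyadic chaining in time. With $\Pi_k(t) = T2^{-k}\lfloor 2^k t/T\rfloor$ as in Proposition~\ref{rkhs_without_log}, write
\begin{equation}
    \frac{1}{n}\EE\sup_{0\le t \le T}\|v_t\| \le \frac{1}{n}\EE\|v_0\| + \sum_{k=0}^\infty \frac{1}{n}\EE\sup_{0\le t \le T}\|v_{\Pi_{k+1}(t)} - v_{\Pi_k(t)}\|,
\end{equation}
the telescoping being justified by the continuity of $t\mapsto v_t$ together with the vanishing of the tail increments, which follows from $\mathrm{Lip}$-type control and Assumption~\ref{assump:module}: $\EE\sup_t\|X_t^1 - X_{\Pi_k(t)}^1\| \le C Q^{1/2}(T2^{-k})^{\alpha/2}(k+1)^{\beta/2}\to 0$. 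The leading term is bounded by $\sqrt{\tfrac{1}{n}\int \sup_t(\|x_t\|^2+1)\rmd\mu} \le \sqrt{\tfrac{2}{n}\int\sup_t(\|x_t\|+1)^2\rmd\mu}$. For the $k$-th increment term, there are at most $2^{k+1}$ distinct values of $\Pi_{k+1}(t)$ (so $n' = 2^{k+1}$), and each increment is an increment over a time gap $T2^{-(k+1)}$; applying the maximal inequality above together with Assumption~\ref{assump:module} gives a bound of order $\sqrt{\tfrac{k+1}{n}\,Q(T2^{-k})^\alpha(k+1)^\beta}$. Summing over $k$ and using $\sum_k \sqrt{(k+1)^{\beta+1}2^{-\alpha k}} < \infty$ (a constant depending only on $\alpha,\beta$) collapses the series to $C\sqrt{QT^\alpha/n}$, which yields exactly the claimed inequality.

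The only mildly delicate points, and where I would spend the most care, are: (i) justifying the interchange of $\sup_{0\le t\le T}$ with the expectation and the telescoping decomposition — this needs the a.s. continuity of $t\mapsto X_t^i$ (hence of $v_t$) plus the quantitative modulus bound from Assumption~\ref{assump:module} to control the tail of the chaining series, exactly as in the proof of Proposition~\ref{rkhs_without_log}; and (ii) getting the $\sqrt{\log n'}$ factor with a \emph{universal} constant in the maximal inequality over $n'$ points, which is where the sub-Gaussian concentration of Euclidean norms of Rademacher combinations (Hanson--Wright) is invoked, together with the standard ``expectation of a max of sub-Gaussians'' bound \cite[Exercise~2.5.10]{vershynin2018high}. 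Everything else is bookkeeping: converting the sum $\sum_i\|\cdot\|^2$ to $n$ times one copy via i.i.d.-ness, and the elementary inequality $\|x_t\|^2+1 \le (\|x_t\|+1)^2$ to match the right-hand side. I do not anticipate a genuine obstacle — the structure is essentially identical to Proposition~\ref{rkhs_without_log} with the explicit kernel $k(x,x')=x\transpose x'+1$ in place of a general $\mathcal{H}_k$.
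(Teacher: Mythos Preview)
Your proposal is correct and takes the same approach as the paper: the paper's proof consists of the single observation that the quantity under the square root is $\sum_{i,j}\xi_i\xi_j k(X_t^i,X_t^j)$ for the kernel $k(x,x')=x\transpose x'+1$, which satisfies condition~(a) of Theorem~\ref{thm: rkhs i.i.d.} with $K_1=K_2=1$, so the chaining argument of Proposition~\ref{rkhs_without_log} applies verbatim. Your write-up simply unpacks that reference in detail, via the equivalent formulation $g_t(\xi)=\|\sum_i\xi_i((X_t^i)\transpose,1)\transpose\|$.
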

\begin{proof}
Taking $k(x,x') = x\transpose x' + 1$ for any $x,x' \in \bR^d$, this lemma can be derived using the proof of Proposition \ref{rkhs_without_log}.
\end{proof}
\begin{prop}
\begin{enumerate}
    \item Let $\Phi = \mathcal{B}_1$ be the unit ball of Barron space $\mathcal{B}$, then

\begin{equation}
   \EE \sup_{0\le t \le T} \rmD_\Phi(\mu_t,\bar{\mu}_t^n)\le 4\sqrt{\frac{2}{n}\int_{C([0,T];\bR^d)}\sup_{0\le t \le T}[\|x_t\|^2+1]\rmd\mu(x)} + C\sqrt{\frac{QT^\alpha}{n}}.
\end{equation}
   \item Let $\Phi = \{f \in \mathcal{D}, \|f\|_{\mathcal{D}} \leq 1 \}$ be the unit ball of flow-induced function spaces $\mathcal{D}$, then
   \begin{equation}
        \EE \sup_{0\le t \le T} \rmD_\Phi(\mu_t,\bar{\mu}_t^n)\le 2e^2\sqrt{\frac{2}{n}\int_{C([0,T];\bR^d)}\sup_{0\le t \le T}[\|x_t\|^2+1]\rmd\mu(x)} + C\sqrt{\frac{QT^\alpha}{n}}.
   \end{equation}
   \end{enumerate}
\end{prop}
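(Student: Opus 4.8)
The plan is to mimic the chaining argument of Proposition~\ref{rkhs_without_log}, but starting from the Rademacher-complexity bounds already established for $\mathcal{B}_1$ and for the unit ball of $\mathcal{D}$ in Theorems~\ref{thm: Property_Barron_Space}~(f) and \ref{thm: compositional}~(c), respectively. First I would recall that, exactly as in Theorem~\ref{thm: iid} and Theorem~\ref{thm: process}, the quantity $\EE\sup_{0\le t\le T}\rmD_\Phi(\mu_t,\bar\mu_t^n)$ is controlled by $\frac{2}{n}\EE\sup_{t\in[0,T]}\sup_{f\in\Phi}\bigl|\sum_{i=1}^n\xi_if(X_t^i)\bigr|$, where $X^1,\dots,X^n$ are i.i.d. draws from $\mu$ and $\xi_i$ are independent Rademacher variables. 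So it suffices to bound this symmetrized supremum without the logarithmic loss.

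Next I would set up the dyadic chaining: for an integer $k$ and $t\in[0,T]$ put $\Pi_k(t) = T2^{-k}\lfloor 2^k t/T\rfloor$, and telescope
\begin{equation}
\frac1n\EE\sup_{t}\sup_{f\in\Phi}\Bigl|\sum_i\xi_if(X_t^i)\Bigr|
\le \frac1n\EE\sup_{f\in\Phi}\Bigl|\sum_i\xi_if(X_0^i)\Bigr|
+ \sum_{k=0}^\infty \frac1n\EE\sup_{t}\sup_{f\in\Phi}\Bigl|\sum_i\xi_i\bigl[f(X^i_{\Pi_{k+1}(t)})-f(X^i_{\Pi_k(t)})\bigr]\Bigr|,
\end{equation}
the convergence of the tail being justified by Lipschitz continuity ($\mathrm{Lip}(f)\le 1$ for both classes) together with Assumption~\ref{assump:module}/\eqref{assump:modulus}, just as in the RKHS case. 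The base term is bounded by $A_3\sqrt{\tfrac2n\int\sup_t[\|x_t\|^2+1]\rmd\mu}$ using Assumption~\ref{function_class_assumption}~(c). For each increment term, the key observation is that both $\mathcal{B}_1$ and the unit ball of $\mathcal{D}$ reduce the empirical Rademacher complexity to a quantity controlled by a linear functional: for $\mathcal{B}_1$ one has $\mathrm{Rad}_n\le\frac2n\EE\|\sum_i\xi_i((x^i)\transpose,1)\transpose\|$ from the proof of Theorem~\ref{thm: Property_Barron_Space}~(f), and for $\mathcal{D}$ the ResNet contraction argument in Theorem~\ref{thm: compositional}~(c) gives $\mathrm{Rad}_n\le e^2 R_0 = \frac{e^2}n\EE\|\sum_i\xi_i((x^i)\transpose,1,0)\transpose\|$. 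The same reductions, applied pathwise with $x^i$ replaced by the increments $X^i_{\Pi_{k+1}(t)}$ and $X^i_{\Pi_k(t)}$ (the contraction estimate in Theorem~\ref{thm: compositional}~(c) is uniform in the evaluation points and so survives taking $\sup_t\max_p$), bound each chaining increment by (a constant multiple of) $\frac1n\EE\sup_{t}\sqrt{\sum_{i,j}\xi_i\xi_j[(X^i_{\Pi_{k+1}(t)}-X^i_{\Pi_k(t)})\transpose(X^j_{\Pi_{k+1}(t)}-X^j_{\Pi_k(t)})+\text{lower order}]}$. This is precisely the object estimated by Lemma~\ref{lem_estimate_sup} (taking $k(x,x')=x\transpose x'+1$, which is the linear kernel), which provides the Hanson--Wright-type tail bound giving the $\sqrt{\log(2^{k+1})}$ factor times $\sqrt{\EE\sup_t\|X^1_{\Pi_{k+1}(t)}-X^1_{\Pi_k(t)}\|^2}$; the latter is $\le Q(T2^{-k})^\alpha(k+1)^\beta$ by \eqref{assump:modulus}. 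Summing $\sum_k\sqrt{(k+1)^{\beta+1}2^{-\alpha k}}<\infty$ collapses the chain to $C\sqrt{QT^\alpha/n}$, yielding the two stated bounds with the constants $A_3=2$ (Barron) and $A_3=e^2$ (flow-induced) appearing as the prefactor $2A_3$ of the main term, plus the $\log$-free remainder.

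The main obstacle I anticipate is the flow-induced case: unlike Barron functions, elements of $\mathcal{D}$ are not linear in a bounded feature map, so I cannot directly write $\sum_i\xi_i f(y^i)$ as an inner product and invoke the Hanson--Wright inequality. The resolution is that Theorem~\ref{thm: compositional}~(c) already did the hard work—its ODE/Gronwall argument shows $R_1^\eps\le e^{2(1+\eps)^3}R_0^\eps$ with $R_0$ a purely \emph{linear} object—so I would re-run that argument with the time variable $t\in[0,T]$ and the dyadic index frozen, checking that the supremum over $t$ and over $1\le p\le n'$ commutes through the ODE comparison (it does, since the bound $\overline{\lim}_h (R_{\tau+h}^\eps-R_\tau^\eps)/h\le 2(1+\eps)^3 R_\tau^\eps$ holds pointwise for each fixed configuration of evaluation points, hence for the max/sup over finitely many or continuously many of them). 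Once reduced to the linear object, Lemma~\ref{lem_estimate_sup} finishes it uniformly. A minor point to verify carefully is that the lower-order terms ($+1$, or the constant coordinate) are handled by the same kernel $x\transpose x'+1$ built into Lemma~\ref{lem_estimate_sup}, so no separate estimate is needed there.
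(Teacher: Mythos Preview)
Your chaining-first strategy has a real gap. The reductions in Theorems~\ref{thm: Property_Barron_Space}(f) and~\ref{thm: compositional}(c) bound $\frac1n\EE_\xi\sup_{f\in\Phi}\bigl|\sum_i\xi_i f(x^i)\bigr|$ at a single configuration $\{x^i\}$; they do \emph{not} bound the increment $\frac1n\EE_\xi\sup_{f\in\Phi}\bigl|\sum_i\xi_i[f(a^i)-f(b^i)]\bigr|$ in terms of $a^i-b^i$. In the Barron case, after peeling off the signed measure you are left with $\EE_\xi\sup_{(\omega,c)}\bigl|\sum_i\xi_i[\sigma(\omega\cdot a^i+c)-\sigma(\omega\cdot b^i+c)]\bigr|$, and Talagrand's scalar contraction cannot turn this into a linear functional of the increments: $(u,v)\mapsto\sigma(u)-\sigma(v)$ is not a function of $u-v$, and a vector-valued contraction would yield a bound in $a^i$ and $b^i$ separately. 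The Hanson--Wright step from Proposition~\ref{rkhs_without_log} likewise needs a \emph{single} PSD matrix per time pair, whereas here you have an uncountable family indexed by $(\omega,c)$ (or by $\pi$ in the RKHS picture of Theorem~\ref{thm: Property_Barron_Space}(c)), so no union bound is available without reintroducing a dimension-dependent covering of $\mathbb{S}^d$.

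The paper avoids this by reversing the order: first carry $\sup_{0\le t\le T}$ through the Barron/flow-induced reductions (the symmetry step and the contraction/ODE argument hold for an arbitrary index set, so absorbing $t$ is free), arriving at
\[
\EE\sup_{0\le t\le T}\rmD_\Phi(\mu_t,\bar\mu_t^n)\le K_\Phi\cdot\frac1n\,\EE\sup_{0\le t\le T}\sqrt{\sum_{i,j}\xi_i\xi_j\bigl[(X_t^i)\transpose X_t^j+1\bigr]},\qquad K_\Phi\in\{4,\,2e^2\}.
\]
The chaining is then done \emph{once}, inside Lemma~\ref{lem_estimate_sup}, on the linear kernel $k(x,x')=x\transpose x'+1$. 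Your ``main obstacle'' paragraph for the flow-induced case already describes exactly this route (re-run the ODE comparison with $\sup_t$, then hand off to Lemma~\ref{lem_estimate_sup}); what you missed is that this reduce-first order works verbatim, and more simply, for the Barron case as well, so the telescoping on $\sup_{f\in\Phi}$ that you set up is never needed.
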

\begin{proof}
The proof of these arguments is quite similar with the proof of claim (f) in Theorem \ref{thm: Property_Barron_Space} and claim (c) in Theorem \ref{thm: compositional} with the above Lemma \ref{lem_estimate_sup}.

With \cite[Lemma 26.2]{shalev2014understanding}, and following the proofs of claim (f) in Theorem \ref{thm: Property_Barron_Space} and claim (c) in Theorem \ref{thm: compositional}, we obtain
\begin{equation}
     \EE \sup_{0\le t \le T} \rmD_\Phi(\mu_t,\bar{\mu}_t^n) \le K_\Phi \frac{1}{n}\EE \sup_{0\le t \le T}\sqrt{\sum_{i=1}^n\sum_{j=1}^n\xi_i\xi_j 
    [(X_t^i)\transpose X_t^j+1]}, 
\end{equation}
where $K_\Phi = 4$ in case 1, and $K_\Phi = 2e^2$ in case 2. Then we conclude our results by applying Lemma~\ref{lem_estimate_sup}.
\end{proof}

\end{document}